\documentclass[11pt]{article}
\usepackage{verbatim}
\usepackage[margin=1in]{geometry}
\usepackage{amsmath,amssymb,amsthm,mathtools}
\usepackage{enumitem}
\usepackage{hyperref}
\usepackage[colorinlistoftodos,textsize=scriptsize]{todonotes}
\usepackage{authblk}

\newtheorem{theorem}{Theorem}[section]
\newtheorem{lemma}[theorem]{Lemma}
\newtheorem{proposition}[theorem]{Proposition}

\theoremstyle{definition}

\DeclareMathOperator{\lcm}{lcm}
\DeclareMathOperator{\CRT}{CRT}

\newcommand{\Z}{\mathbb Z}

\newcommand{\seqnum}[1]{\href{https://oeis.org/#1}{\rm \underline{#1}}}

\title{Asymptotic Behavior of Iterated Sets of Remainders}

\author{
  \begin{minipage}{0.45\textwidth}
    \centering
    \textbf{Omkar Baraskar} \\
    {\small Cheriton School of Computer Science\\ University of Waterloo \\ \textit{ON N2L 3G1, Canada} \\
    \texttt{obaraska26@gmail.com}}
  \end{minipage}
  \hfill
  \begin{minipage}{0.45\textwidth}
    \centering
    \textbf{Prashant Gokhale} \\
    {\small University of Wisconsin-Madison \\
    \texttt{prashant.gokhale@wisc.edu}}
  \end{minipage}
  
  \vspace{0.8cm} 
  
  \begin{minipage}{0.45\textwidth}
    \centering
    \textbf{Sarvagya Jain} \\
    {\small Department of Mathematics and Statistics \\
    University of Turku \\
    \textit{20014 Turku, Finland} \\
    \texttt{sajain@utu.fi}}
  \end{minipage}
  \hfill
  \begin{minipage}{0.45\textwidth}
    \centering
    \textbf{Adam Kie\.zun} \\
    {\small Meta Superintelligence Labs} \\ 
    \texttt{akiezun@meta.com} 
  \end{minipage}
}

\date{}

\newcommand{\comP}[1]{%
  \relax
}

\newcommand{\comS}[1]{%
 \relax
}

\newcommand{\comA}[1]{%
  \relax
}

\newcommand{\comO}[1]{%
  \relax
}

\begin{document}
\maketitle
\begin{abstract}
For a positive integer \(n\), let
\[
S_0(n)=\{1,2,\ldots,\lfloor n/2\rfloor\},\qquad
S_{j+1}(n)=\{n\bmod k:k\in S_j(n)\setminus\{0\}\},
\]
and put \(s_j(n) \coloneq |S_j(n)|\). The sets defined above arise naturally in the study of the length of the Pierce series expansion of a rational number. 
In \cite{Ba-Vu}, Baraskar and Vukusic conjectured that for every fixed \(j\geq 2\), the limit \[\lim_{n\to\infty} s_j(n)/n\] does not exist. In this paper, we prove this conjecture in the affirmative. 

Moreover, we define a new class of iterated remainder sets \(T_j(n)\) that naturally arises from the study of the length of the Engel series expansion of a rational number. We analogously study the asymptotic behavior of \(t_j(n) \coloneq |T_j(n)| \). We show that
\[
\lim_{n\to\infty}\frac{t_j(n)}n
\]
exists precisely for \(j\in\{0,1\}\) and fails to exist for every fixed
integer \(j\geq2\).

\end{abstract}
\section{Introduction}
\comS{Go over the draft of the paper and find the errors and gaps in it, and suggest how to fix them. Classify the errors into three categories:
1. Typos, 2. Grammar mistakes and 3. Mathematical errors, 4. Unclear conventions, 5. insufficient exposition. Also, find displays that have been numbered but never referred to later. }

In \cite{Pierce1929} Pierce showed that every real number $x \in (0,1]$ has a unique expansion as 

$$x = \frac{1}{x_1}-\frac{1}{x_1x_2}+\frac{1}{x_1x_2x_3}-\dots,$$
where $(x_i)$ form a strictly increasing sequence\footnote{When the expansion has at least two terms and terminates with the last term $\frac{(-1)^{n+1}}{x_1\cdots x_n}$, we additionally enforce $x_n > x_{n-1} + 1$. This is done to ensure uniqueness as $\frac{1}{k} = \frac{1}{k-1} - \frac{1}{k(k-1)}$ for $k\geq 2$.} of positive integers. This expression was named Pierce expansion by Shallit in \cite{Sha1986}.


Similarly, in \cite{Engel1913} Engel showed that every positive real number $y$ has a unique expansion, called the Engel series
$$y = \frac{1}{y_1}+\frac{1}{y_1y_2}+\frac{1}{y_1y_2y_3}+\dots,$$
where $(y_i)$ form a nondecreasing sequence of positive integers. If the expansion is infinite, we enforce that the sequence $y_i$ is not eventually constant, in order to exclude alternative infinite expansions for rational numbers and ensure uniqueness.

\comO{Should we keep the fact that $y_i$'s are not eventually constant for inifinite Engel expansions in the introduction. Feels kind of random.}
\comS{For uniqueness this is needed, also if don't include this then one will have two reps for rational... one of which is infinite}

Both the Pierce and Engel expansions have finitely many terms if and only if the input is rational.

Erd\H{o}s and Shallit in~\cite[Observation~1]{Er-Sh} noted that
the algorithm generating the Pierce expansion of a rational number can be
described in terms of successive integer remainders. Fix a positive integer
\(n\) and \(1\leq a\leq n\). Set \(a_0:=a\), and, as long as \(a_j>0\),
define
\[
    q_{j+1}:=\left\lfloor\frac{n}{a_j}\right\rfloor,
    \qquad
    a_{j+1}:=n-q_{j+1}a_j=n\bmod a_j.
\]
Thus
\[
    n=q_{j+1}a_j+a_{j+1},
    \qquad
    0\leq a_{j+1}<a_j.
\]
We define \(\mathcal P(n,a)\) to be the least integer \(j\geq1\) for
which \(a_j=0\). Erd\H{o}s and Shallit observed that \(\mathcal P(n,a)\) is the length of the Pierce expansion of $a/n$. The result ~\cite[Theorem~2]{Er-Sh}, together with the algorithmic
description, motivate the study of the Pierce-type length function
\[
    \mathcal P(n):=
    \max_{1\leq a\leq n}\mathcal P(n,a).
\]

There is an analogous description for Engel expansions. Fix a positive
integer \(n\) and \(1\leq a\leq n\). Set \(a_0:=a\), and, as long as
\(a_j>0\), define
\[
    q_{j+1}:=\left\lceil\frac{n}{a_j}\right\rceil,
    \qquad
    a_{j+1}:=q_{j+1}a_j-n=(-n)\bmod a_j.
\]
Thus
\[
    q_{j+1}a_j=n+a_{j+1},
    \qquad
    0\leq a_{j+1}<a_j.
\]
We define \(\mathcal E(n,a)\) to be the least integer \(j\geq1\) for
which \(a_j=0\). Erd\H{o}s and Shallit in~\cite[p. 45]{Er-Sh}, observed that \(\mathcal E(n,a)\) is the length of the Engel expansion of $a/n$. The result~\cite[Theorem~5]{Er-Sh}, together with the analogous
algorithmic structure, motivate the Engel-type length function
\[
    \mathcal E(n):=
    \max_{1\leq a\leq n}\mathcal E(n,a).
\]

Building on the work of~\cite[Theorem 2]{Er-Sh}, Chase and Pandey in~\cite[Theorem 1.1, Theorem 1.2]{Ch-Pa} showed that
\begin{align}\label{eqn: bounds P(n) ch-pa}
    \frac{\log n}{\log\log n}\ll \mathcal{P}(n)\ll_\varepsilon n^{\frac{1}{3}-\frac{2}{177}+\varepsilon}.
\end{align}
For the Engel-type length function, ~\cite[Theorem 5]{Er-Sh} gives
\begin{align*}
    \mathcal{E}(n)\ll_\varepsilon n^{\frac{1}{3}+\varepsilon}.
\end{align*}

Let \(n\) be a positive integer. Baraskar and Vukusic in~\cite{Ba-Vu} studied the
remainder set
\[
S_1(n):=\{n\bmod k:1\leq k\leq \lfloor n/2\rfloor\}
\]
and denoted its cardinality by \(s_1(n)\). This sequence is listed as \seqnum{A283190} in the OEIS (On-Line Encyclopedia of Integer Sequences) \cite{oeis}. More generally, Baraskar and Vukusic
defined the iterated remainder sets
\[
S_0(n):=\{1,2,\ldots,\lfloor n/2\rfloor\},\qquad
S_{j+1}(n):=\{n\bmod k:k\in S_j(n)\setminus\{0\}\},
\]
and wrote
\[
s_j(n):=|S_j(n)|.
\]
They showed that these sets are closely related to the Pierce-type length function \(\mathcal{P}(n)\); 
the precise relation is stated below.
\begin{lemma}\label{lemma: pierce length criterion}
Let \(n\geq3\) and \(t\geq2\). Then $\mathcal{P}(n) = t$ if and only if  \(S_{t-1}(n)=\{0\}\).
\end{lemma}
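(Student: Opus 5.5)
The plan is to read $S_j(n)$ as the set of $j$-th terms of the Pierce remainder chains that start in $S_0(n)$. Then $S_{t-1}(n)=\{0\}$ says exactly that the longest such chain has length $t-1$. Separately, I will relate $\mathcal P(n)$ to chains started in $S_0(n)$ by handling starting values $a>n/2$.

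\emph{Step 1 (description of $S_j$).} For $b\in S_0(n)$ write $a_0(b)=b$, $a_{i+1}(b)=n\bmod a_i(b)$, defined as long as $a_i(b)>0$. Put
\[
M:=\max_{b\in S_0(n)}\mathcal P(n,b),
\]
which makes sense since $n\ge 3$ gives $S_0(n)\ne\emptyset$. By induction on $j$, directly from the recursive definition,
\[
S_j(n)=\{a_j(b): b\in S_0(n),\ a_{j-1}(b)>0\}\quad (j\ge1).
\]
Since $a_j(b)$ is defined and equals $0$ exactly when $j=\mathcal P(n,b)$, we obtain two facts:
\begin{itemize}
\item $0\in S_j(n)$ iff some $b$ has $\mathcal P(n,b)=j$;
\item $S_j(n)\setminus\{0\}\neq\emptyset$ iff some $b$ has $\mathcal P(n,b)>j$.
\end{itemize}
Hence, for $j\ge1$, $S_j(n)=\{0\}$ iff $M\le j$ and some $b$ attains $\mathcal P(n,b)=j$, that is, iff $M=j$. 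Applying this with $j=t-1\ge1$ gives $S_{t-1}(n)=\{0\}\iff M=t-1$.

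\emph{Step 2 ($\mathcal P(n)=M+1$).} This is the step needing care; the obstacle is the boundary case $a=n/2$. I split the starting values $1\le a\le n$ into three ranges.
\begin{itemize}
\item If $a=n$, then $a_1=0$, so $\mathcal P(n,n)=1$.
\item If $n/2<a<n$, then $q_1=1$ and $a_1=n-a$ with $1\le n-a<n/2$, so $a_1\in S_0(n)$ and $\mathcal P(n,a)=1+\mathcal P(n,n-a)$.
\item If $a\le n/2$, then $\mathcal P(n,a)\le M$ by definition of $M$.
\end{itemize}
As $a$ runs over $(n/2,n)$, the value $n-a$ runs over all $b\in S_0(n)$ with $b<n/2$. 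So
\[
\mathcal P(n)=\max\Bigl(1,\;M,\;1+\max_{b\in S_0(n),\,b<n/2}\mathcal P(n,b)\Bigr).
\]
It remains to show the last maximum equals $M$. The only possibly excluded element is $b=n/2$ when $n$ is even, and $\mathcal P(n,n/2)=1$ because $n\bmod (n/2)=0$. Meanwhile $b=1$ satisfies $1<n/2$ for $n\ge3$ and also has $\mathcal P(n,1)=1$. So excluding $b=n/2$ never lowers the maximum, the last maximum is $M$, and $\mathcal P(n)=M+1$.

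\emph{Conclusion.} Combining the two steps, $\mathcal P(n)=t\iff M=t-1\iff S_{t-1}(n)=\{0\}$, using $t-1\ge1$ so that Step 1 applies. The only delicate points are the $b=n/2$ boundary handled in Step 2 and making sure the hypotheses $n\ge3$ and $t\ge2$ are actually used.
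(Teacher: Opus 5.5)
Your proof is correct and complete. Step~1 (the trajectory description $S_j(n)=\{a_j(b):b\in S_0(n),\ a_{j-1}(b)>0\}$, giving $S_j(n)=\{0\}\iff M=j$ for $j\ge1$) and Step~2 (the one-step reduction from $a\in(n/2,n)$ into $S_0(n)$, giving $\mathcal P(n)=M+1$) are both sound.

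The paper does not prove this lemma itself. It defers to \cite[Lemma~19]{Ba-Vu} and only remarks that the original argument has an indexing issue. The closest in-paper argument is the proof of Lemma~\ref{lemma: engel length criterion}, which is essentially your Step~1 carried out for $T_j(n)$. There no analogue of your Step~2 is needed, because $T_0(n)=\{1,\ldots,n-1\}$ already contains every starting value except $a=n$, and that value has $\mathcal E(n,n)=1$. This is why the Engel criterion reads $T_\ell(n)=\{0\}$ with no shift.

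Your Step~2 therefore isolates exactly where the Pierce shift $t\mapsto t-1$ comes from: $S_0(n)$ only holds starting values up to $n/2$. Your treatment of $b=n/2$ also shows that the hypothesis $n\ge3$ is genuinely needed. For $n=2$ one has $S_1(2)=\{0\}$ but $\mathcal P(2)=1$, so the statement would fail.

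What your route buys is a self-contained proof that makes the indexing correction explicit. The paper's citation is shorter but leaves both the source of the shift and the role of $n\ge3$ implicit.
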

\begin{proof}
    See \cite[Lemma 19]{Ba-Vu}. We note that there is minor indexing issue in the original argument. 
    
\end{proof}

On the Stack Exchange website \cite{Israel2017}, Robert Israel numerically observed that the value of $s_1(n)/n$ seems to converge to $0.2296$ as $n \to \infty$ and asked whether the limit exists and, if so, what its value is. The user Empy2 showed that the limit exists and is equal to 

\begin{equation}\label{eq:asymp}
    \lim_{n \to \infty} \frac{s_1(n)}{n}
    = \sum_{p \text{ prime}} \frac{1}{p(p+1)} 
        \cdot \prod_{\substack{q \text{ prime} \\ q < p}} \left( 1 - \frac{1}{q} \right)
    \approx  0.2296. 
\end{equation}

In~\cite{Ba-Vu}, Baraskar and Vukusic studied similar questions for $s_j(n)$ and  showed that for each fixed \(j\geq 0\), the quantity
\(s_j(n)\) grows linearly along suitable ranges of \(n\). In particular,
they proved the bounds
\[
\frac{1}{(j+2)!}
\leq
\liminf_{n\to\infty}\frac{s_j(n)}{n}
\leq
\limsup_{n\to\infty}\frac{s_j(n)}{n}
\leq
\frac{1}{j+2}.
\]

Based on numerical evidence, Baraskar and Vukusic conjectured that $s_j(n)/n$ does not converge for any fixed $j\ge2$; see~\cite[Problem 2]{Ba-Vu}.

Our first result resolves the above conjecture in the affirmative.
\begin{theorem}\label{thm:non-conv-s}
For \(j\geq 2\), define
\[
\Delta_j^P
:=
\limsup_{n\to\infty}\frac{s_j(n)}{n}
-
\liminf_{n\to\infty}\frac{s_j(n)}{n}.
\]
Then
\[
\Delta_j^P\geq \frac{j-1}{(j+4)!}.
\]
In particular, for every fixed integer \(j\geq 2\), the limit
\[
\lim_{n\to\infty}\frac{s_j(n)}{n}
\]
does not exist.

Moreover,
\[
\frac{1}{(j+2)!}
\leq
\liminf_{n\to\infty}\frac{s_j(n)}{n}
\leq
\limsup_{n\to\infty}\frac{s_j(n)}{n}
\leq
\exp\left(-\frac{j}{e}+O(\log j)\right),
\]
where the implied constant is absolute.
\end{theorem}



Baraskar and Vukusic showed~\eqref{eq:asymp} in a more quantitative form by showing that 
\[s_1(n) = n\sum_{p \text{ prime}} \frac{1}{p(p+1)} 
        \cdot \prod_{\substack{q \text{ prime} \\ q < p}} \left( 1 - \frac{1}{q} \right) + O\left(\frac{n}{\log n\log\log n}\right).\]
In~\cite[Problem~1]{Ba-Vu}, they asked for an improvement of the error term and, in the discussion preceding the problem, they made the
conjectural suggestion that the correct order of magnitude should be \(O(n^{1/3})\). In forthcoming work, we show that this conjecture is false. More precisely, for every \(\varepsilon>0\), we prove the upper bound
\[
O_\varepsilon\left(
n\exp\left(-(\sqrt{2}-\varepsilon)
\sqrt{\log n\log\log n}\right)
\right),
\]
while also showing that the error term is not
\[
O\left(
n\exp\left(-(\sqrt{2}+\varepsilon)
\sqrt{\log n\log\log n}\right)
\right)
\]
for any \(\varepsilon>0\). In particular, no power-saving error term is possible.

One may also wish to study analogues of the above sets for Engel expansions. To this end, we introduce the following iterated remainder sets
\begin{align*}
    T_0(n):=\{1, 2, \dots, n-1\},\quad T_{j+1}(n) := \{(-n)\bmod k: k\in T_{j}(n)\setminus \{0\}\},
  \end{align*}
and write
\begin{align*}
    t_j(n):=|T_j(n)|.
\end{align*}
We relate the sets $T_j(n)$ to $\mathcal{E}(n)$ in the following lemma. 
\begin{lemma}\label{lemma: engel length criterion}
Let \(n\geq 2\) and \(\ell\geq 1\). Then $\mathcal{E}(n)= \ell$ if and only if \(T_\ell(n)=\{0\}\). 
\end{lemma}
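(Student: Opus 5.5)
The plan is to identify the sets \(T_j(n)\) with the set of \(j\)-th remainders produced by the Engel algorithm of the introduction, run from all starting values \(a\in\{1,\dots,n-1\}\). Once that is done, the criterion reduces to comparing a maximum with a set description. Throughout, fix \(n\geq2\). For \(1\leq a\leq n\), write \(a_j(a)\) for the \(j\)-th iterate \(a_0=a\), \(a_{j+1}=(-n)\bmod a_j\); it is defined as long as the previous iterate is positive. Since \(0\leq a_{j+1}<a_j\), the iterates strictly decrease until they reach \(0\). Hence \(a_j(a)\) is defined and positive exactly when \(j<\mathcal E(n,a)\), and \(a_{\mathcal E(n,a)}(a)=0\).

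\textbf{Step 1 (description of \(T_j\)).} By induction on \(j\geq0\) we prove
\[
T_j(n)\setminus\{0\}=\{a_j(a):1\leq a\leq n-1,\ \mathcal E(n,a)>j\},
\qquad
0\in T_j(n)\iff \exists\,a\in\{1,\dots,n-1\}:\ \mathcal E(n,a)=j.
\]
For \(j=0\) this holds because \(T_0(n)=\{1,\dots,n-1\}\) and \(\mathcal E(n,a)\geq1\) always. For the inductive step, \(T_{j+1}(n)\) is the image of \(T_j(n)\setminus\{0\}\) under \(k\mapsto(-n)\bmod k\). By the hypothesis, this image is exactly \(\{a_{j+1}(a):\mathcal E(n,a)>j\}\). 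Splitting according to whether \(a_{j+1}(a)\) is positive, i.e. \(\mathcal E(n,a)>j+1\), or zero, i.e. \(\mathcal E(n,a)=j+1\), gives both statements for \(j+1\). This bookkeeping is the only place needing care. One must check that removing \(0\) before applying the map corresponds precisely to discarding trajectories that have already terminated. It must also be clear that the same value \(k\) may arise from several \(a\); this is harmless because only the set of values matters.

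\textbf{Step 2 (reduction to \(a<n\)).} We have \(\mathcal E(n,n)=1\), since \(q_1=1\) and \(a_1=0\). Also \(\mathcal E(n,a)\geq1\) for every \(a\), and \(a=1\) lies in \(\{1,\dots,n-1\}\) because \(n\geq2\). Therefore
\[
\mathcal E(n)=M:=\max_{1\leq a\leq n-1}\mathcal E(n,a).
\]

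\textbf{Step 3 (conclusion).} First suppose \(\mathcal E(n)=\ell\), so \(M=\ell\). Then no \(a\) satisfies \(\mathcal E(n,a)>\ell\), so \(T_\ell(n)\setminus\{0\}=\emptyset\) by Step 1. Some \(a\) attains \(\mathcal E(n,a)=\ell\), so \(0\in T_\ell(n)\). Hence \(T_\ell(n)=\{0\}\).

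Conversely, suppose \(T_\ell(n)=\{0\}\). Since \(0\in T_\ell(n)\), Step 1 gives some \(a\) with \(\mathcal E(n,a)=\ell\). Since \(T_\ell(n)\) has no positive element, every \(a\leq n-1\) satisfies \(\mathcal E(n,a)\leq\ell\). Thus \(M=\ell\), and Step 2 gives \(\mathcal E(n)=\ell\).

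I expect no real obstacle. The main point to get right is the inductive identification in Step 1, in particular that \(0\in T_{j+1}(n)\) records trajectories ending exactly at step \(j+1\). The rest is an immediate comparison of the maximum with the sets.
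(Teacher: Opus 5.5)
Your proof is correct and takes essentially the same route as the paper. The paper also identifies \(T_j(n)\) with the set of \(j\)-th Engel iterates from starting values \(1\leq a<n\), so that \(0\in T_j(n)\) exactly when some trajectory has length \(j\), and then compares with the maximum in both directions. Your explicit induction in Step 1 and your observation \(\mathcal E(n,n)=1\) in Step 2 spell out two points the paper passes over quickly: it asserts the identification ``by the definition'' and that the maximum is attained at some \(a<n\).
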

We defer the proof of Lemma~\ref{lemma: engel length criterion} to Section~\ref{sec: Length Functions and Iterated Remainder Sets}.

In analogy with the study of $s_j(n)$, we show the following result for $t_j(n)$.
\begin{theorem}\label{thm:non-conv-t}
For \(j\geq 2\), define
\[
\Delta_j^E
:=
\limsup_{n\to\infty}\frac{t_j(n)}{n}
-
\liminf_{n\to\infty}\frac{t_j(n)}{n}.
\]
Then
\[
\Delta_j^E\geq \frac{1}{24\cdot 3^j}.
\]
In particular, for every fixed integer \(j\geq 2\), the limit
\[
\lim_{n\to\infty}\frac{t_j(n)}{n}
\]
does not exist.

Moreover,
\[
\frac{3}{2^{j+1}}-\frac{1}{2\cdot 3^j}
\leq
\liminf_{n\to\infty}\frac{t_j(n)}{n}
\leq
\limsup_{n\to\infty}\frac{t_j(n)}{n}
\leq
2^{-(j-1)/2}.
\]

When \(j=1\), the limit does exist, and
\[
\lim_{n\to\infty}\frac{t_1(n)}{n}
=
\sum_{t=2}^{\infty}\frac{1}{t(t-1)}
\left(
1-\prod_{\substack{p\leq t\\ p\ \mathrm{prime}}}
\left(1-\frac{1}{p}\right)
\right)
=
\sum_{p\ \mathrm{prime}}
\frac{1}{p(p-1)}
\prod_{\substack{q<p\\ q\ \mathrm{prime}}}
\left(1-\frac{1}{q}\right).
\]
\end{theorem}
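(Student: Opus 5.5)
We first reformulate membership in the sets $T_j(n)$, then treat the cases $j\in\{0,1\}$, and finally handle $j\ge 2$ by tracking how the top part of $T_j(n)$ depends on $n$ modulo small integers.

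\textbf{Reformulation.} For $1\le k\le n-1$ write $(-n)\bmod k=qk-n$ with $q=\lceil n/k\rceil\ge 2$. Then $r\ge 1$ lies in $T_1(n)$ if and only if $n+r=qk$ for some $k>r$ and some $q\ge 2$. Equivalently, $n+r$ has a divisor $q$ with $2\le q<(n+r)/r=1+n/r$. Since the smallest divisor $q\ge2$ of $n+r$ is its least prime factor, this reads
\[
r\in T_1(n)\iff \text{the least prime factor of } n+r \text{ is } < 1+n/r .
\]
Also $0\in T_1(n)$ for $n\ge 3$. More generally, $r\in T_{j+1}(n)$ if and only if $n+r=qk$ with $k\in T_j(n)$ and $k>r$.

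\textbf{The cases $j=0,1$.} For $j=0$ we have $t_0(n)=n-1$, so the limit is $1$. For $j=1$, fix $t\ge2$ and split the range of $r$ into intervals $n/t\le r<n/(t-1)$. On such an interval $1+n/r\in(t,t+1]$, so the condition becomes: $n+r$ has a prime factor $\le t$, up to an $O(1)$ boundary effect at the endpoints.

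The interval has length $n/(t(t-1))$. For fixed $t$, the integers $n+r$ in it are equidistributed modulo $\prod_{p\le t}p$. Hence the proportion of $r$ in the interval with $n+r\in T_1$-form tends to $1-\prod_{p\le t}(1-1/p)$.

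Truncate at $t\le H$. The tail $r<n/H$ contributes at most $n/H$, which can be made arbitrarily small. Letting $n\to\infty$ and then $H\to\infty$ gives the first series. Abel summation over $t$ then converts it to the prime sum: the product changes only when $t$ is prime, and $\sum_{t\ge p}\frac1{t(t-1)}=\frac1{p-1}$.

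\textbf{The case $j\ge 2$.} Here we study the top of $T_j(n)$ through the affine branches $\phi_q(x)=qx-n$ on $(n/q,n/(q-1)]$.

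For $q=2$, $\phi_2$ maps $(n/2,n)$ onto the numbers of the same parity as $n$ in $(0,n)$. Iterating $\phi_2$ $j$ times, all of $T_0$ feeds into $T_j$. This, together with the $q=3$ branch feeding back into the top range, produces an explicit subset of $T_j(n)$ of the form $\{x\equiv n \pmod{2^i}\}$ or $\{x\equiv n\pmod{3\cdot2^i}\}$ on explicit intervals. Counting these subsets gives $\liminf t_j(n)/n\ge \frac{3}{2^{j+1}}-\frac{1}{2\cdot3^j}$, where the last term corrects for the overlap between the $2$-branch and $3$-branch images.

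For the upper bound we argue as follows. Every element of $T_{j}$ above $n/2$ arises only via $\phi_2$, so it lies in a single residue class modulo $2$ relative to its preimage. Elements below $n/2$ come from preimages at most about $n/2$ in size. A two-step argument then shows $t_{j+2}(n)\le \tfrac12 t_j(n)+o(n)$ roughly. Starting from $t_0,t_1\le n$, this yields the bound $2^{-(j-1)/2}$.

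\textbf{Non-convergence.} We compare two congruence classes of $n$, say $n\equiv 0$ and $n\equiv 1$ modulo $6$ (or modulo $3^j$ if needed). For each class we compute the density of $T_j(n)$ in a top window, for example $(n/3,n)$. In this window only the branches $q=2,3$ act, and membership is governed by congruences modulo $2^a3^b$ of the numbers $n+r$. Both densities are limits along the respective classes. The goal is to show they differ by at least $\frac1{24\cdot3^j}$ in the window, while the part of $T_j(n)$ below $n/3$ is shown to depend on $n$ in a way that cannot cancel this difference. One way to arrange this is to bound the lower part's contribution identically in both classes, so that it cannot cancel.

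\textbf{Main obstacle.} The hardest step is the non-convergence computation. The exact description of $T_j(n)\cap(n/3,n)$ requires tracking preimage chains through repeated $\phi_2$ and $\phi_3$ applications and their residue constraints. We must also control the lower range, so that the gap of size $\frac{1}{24\cdot 3^j}$ survives. I would first compute $j=2$ by hand, and then set up an induction on $j$ in which each application of $\phi_3$ costs a factor $1/3$ in the gap, which explains the shape of the bound.
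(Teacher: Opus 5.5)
Your $j=1$ argument is correct and is essentially the paper's. On $[n/t,n/(t-1))$, membership in $T_1(n)$ means exactly that $n+r$ has a prime factor $\le t$. That density does not depend on $n$, which is the statement $\alpha_E(1,t)=\beta_E(1,t)=1-\prod_{p\le t}(1-1/p)$.

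For $j\ge 2$ the non-convergence step has a genuine gap. On each band $J_t(n)$, the density of $T_j(n)$ is $D^{(E)}_{j,t}(n\bmod M_E(j,t))$. This is the density of a union of classes $x\equiv A_E(d)n\pmod{m(d)}$ over $d\in\mathcal D_E(j,t)$, and for $t\ge 4$ it depends on $n$ modulo numbers involving every prime up to $t$. Fixing $n\bmod 6$ (or $n\bmod 3^j$) does not fix these; even band $t=3$ needs $n\bmod 6^j$. "Bounding the lower part identically in both classes" is not possible: a lower bound in one class and an upper bound in the other do not coincide. So the bands $t\ge4$ could a priori cancel a gain of size $3^{-j}$ from the window $(n/3,n)$.

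The missing idea is a sign valid for every band simultaneously, namely $D^{(E)}_{j,t}(1)\ge D^{(E)}_{j,t}(0)$ for all $t$. The paper gets this from the cyclic dilation lemma: $|U(0)|\le|U(a)|\le|U(u)|$ for any union of classes $x\equiv ab_i\pmod{m_i}$. It is proved one prime $\ell$ at a time, using that classes modulo powers of $\ell$ are nested or disjoint, so multiplying by $\ell$ cannot enlarge the union. The paper then compares $n=m!$ with $n=m!+1$. For $m$ large these are $\equiv 0$ and $\equiv 1$ respectively modulo every $M_E(j,t)$ with $t\le T$. This yields
$\Delta_j^E=\sum_t(\beta_E(j,t)-\alpha_E(j,t))/(t(t-1))\ge(\beta_E(j,3)-\alpha_E(j,3))/6$.
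The band-$3$ gap is then an explicit computation with the classes $x\equiv\frac{3^r+1}{2}n\pmod{2^{j-r}3^r}$ and $x\equiv-\frac{3^j-1}{2}n\pmod{3^j}$.

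The two quantitative bounds are not established either.

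\textbf{Upper bound.} You assert $t_{j+2}(n)\le\frac12 t_j(n)+o(n)$ without an argument. The supporting claim that elements below $n/2$ come only from preimages of size at most about $n/2$ is false, since $\phi_2$ maps $(n/2,3n/4)$ into $(0,n/2)$. Nothing in the sketch produces a factor $\frac12$. The paper instead uses the union bound $\beta_E(j,t)\le\sum_{d\in\mathcal D_E(j,t)}1/m(d)\le t(t-1)/2^{j+1}$. This comes from factoring out $2^{-j}$ and comparing with $\prod_{r=3}^{t}(1-2/r)^{-1}=t(t-1)/2$. It combines this with $\beta_E(j,t)\le 1$ and splits the $t$-sum at $T=\lceil 2^{(j+1)/2}\rceil$.

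\textbf{Lower bound.} The term $-\frac{1}{2\cdot 3^j}$ is not an overlap correction for a particular $n$. It comes from the exact value $\alpha_E(j,3)=2^{1-j}-3^{-j}$ of the band-$3$ density at $n\equiv 0$, where the classes are maximally nested. Transferring it to all $n$ again requires that the density at any dilation is at least the density at $0$. It also needs monotonicity in $t$: the paper's bound is $(\alpha_E(j,2)+\alpha_E(j,3))/2$, obtained by summation by parts.

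Finally, everything for $j\ge 2$ rests on the congruence characterization of $T_j(n)\cap J_t^\circ(n)$. Its converse direction says a solution of $r\equiv A_E(d)n\pmod{m(d)}$ lifts to a genuine chain $r_0>r_1>\cdots>r_j=r$ with $r_0\in T_0(n)$. That direction needs the ordering $p\le q_2\le\cdots\le q_j\le t$. Your proposal only gestures at it with "congruences modulo $2^a3^b$ of $n+r$".
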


The expression in the case \(j=1\) is notably similar to its Pierce analogue in~\eqref{eq:asymp}.

As an application of our analysis, we obtain an alternative proof of the
uniform lower bound for \(\mathcal P(n)\) established in
\cite[Theorem~1.2]{Ch-Pa}, together with an explicit lower bound for
\(\mathcal E(n)\). We record these bounds in the following theorem.

\begin{theorem}\label{thm:lower-bounds-length-functions}
The following statements hold.
\begin{enumerate}
    \item Define
    \[
    K(n):=\max\{k\geq 1:k!<n\}.
    \]
    For all sufficiently large \(n\),
    \[
    \mathcal{P}(n)\geq K(n)-1\geq (1-o(1))\frac{\log n}{\log\log n}.
    \]

    \item  For all sufficiently large \(n\),
    \[
    \mathcal{E}(n)\geq \left\lfloor \frac{\log(n-1)}{\log 2}\right\rfloor\geq \frac{\log n}{\log 2}-2.
    \]
\end{enumerate}
\end{theorem}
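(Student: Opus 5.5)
For Part 1 I would not use the sets \(S_j(n)\) at all. Instead I would build, for each large \(n\), one starting value \(a\) whose Pierce remainder chain has a prescribed sequence of quotients, and read off \(\mathcal P(n,a)\geq K(n)-1\) from the definition of \(\mathcal P(n)\) as a maximum. (Equivalently, by Lemma~\ref{lemma: pierce length criterion}, this shows \(S_{K(n)-2}(n)\neq\{0\}\).) For Part 2 the same idea works with an explicit starting value.

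\textbf{Part 2 (Engel).} Take \(a_0=n-1\). While \(a_j>n/2\) we have \(q_{j+1}=\lceil n/a_j\rceil=2\), so \(a_{j+1}=2a_j-n\). Inductively \(a_j=n-2^j\) for as long as \(2^j<n/2\). Once \(a_j\) falls to at most \(n/2\), the quotients change. The conclusion only needs \(a_j>0\) for the first \(\lfloor\log(n-1)/\log 2\rfloor\) steps, so I would handle this in one of two ways. First, I could check directly that \(a_j=n-2^j\geq1\) for all \(j\leq\log_2(n-1)\), justifying the identity past the range \(a_j>n/2\) by the general recursion. Second, I could choose the starting value as the largest \(a\) with \(n-a\) a suitable power of two, so that a doubling pattern persists. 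The final inequality \(\lfloor\log(n-1)/\log2\rfloor\geq \log n/\log 2-2\) is elementary.

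\textbf{Part 1 (Pierce).} The mechanism is that a prescribed quotient sequence cuts out an interval of starting values. Fix \(m\) and prescribe the quotients \(q_1=2,\ q_2=3,\dots,q_m=m+1\). The condition \(\lfloor n/a_j\rfloor=q_{j+1}\) is \(a_j\in(n/(q_{j+1}+1),\,n/q_{j+1}]\), and \(a_{j+1}=n-q_{j+1}a_j\) is an affine function of \(a_j\) with slope \(-q_{j+1}\). Composing, \(a_j=\alpha_j+(-1)^j q_1\cdots q_j\,a_0\), so each condition on \(a_j\) pulls back to an interval of \(a_0\) of length about \(n/(q_1\cdots q_j\,q_{j+1}(q_{j+1}+1))\). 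These intervals are nested in a controlled way, in the spirit of the ``cylinder sets'' of the Pierce map \(x\mapsto 1/x\) on \((0,1]\) normalized by \(n\). I would show that the set of admissible real \(a_0\) is an interval of length \(\gg n/(m+2)!\), which therefore contains an integer as soon as \((m+2)!\) is somewhat smaller than \(n\). Every \(a_j\) in such a chain is nonzero for \(j\leq m\), giving \(\mathcal P(n,a_0)\geq m+1\). Taking \(m\) as large as the factorial condition allows yields \(\mathcal P(n)\geq K(n)-1\). The asymptotic \(K(n)\geq(1-o(1))\log n/\log\log n\) follows from Stirling's formula.

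\textbf{Main obstacle.} The hard step is the bookkeeping in Part 1: verifying that the successive conditions really intersect in an interval of the claimed length, and not in something smaller because of endpoint losses, with constants good enough to reach exactly \(K(n)-1\) rather than \(K(n)-O(1)\). I would first do this for real \(a_0\), using the fact that the Pierce cylinder for the digit pattern \((2,3,\dots)\) has length \(\prod 1/(q_i(q_i+1))\) up to telescoping. I would then absorb the integrality requirement by noting that an interval of length exceeding \(1\) contains an integer. If the constants come out slightly too weak, I would vary the last one or two prescribed quotients to recover the lost factor.
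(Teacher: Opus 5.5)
Your approach is correct, and it is more elementary than the paper's. The paper obtains both parts as corollaries of its congruence machinery. For Part~1 it takes the band $t=j+1$, where $\mathcal D_P(j,j+1)$ is the single tuple $(2;3,\ldots,j+1)$ of modulus $(j+1)!$. If $(j+3)!<n$, the open interval $I^\circ_{j+1}(n)$ has length $n/((j+2)(j+3))>(j+1)!$, so it contains an $r$ in the prescribed class. Lemma~\ref{lem:congruence-criterion pierce} then gives $r\in S_j(n)$, and Lemma~\ref{lemma: pierce length criterion} turns a positive element of $S_j(n)$ into $\mathcal P(n)\ge j+2$; one takes $j=K-3$. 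Part~2 is the same argument with $J_2^\circ(n)$ and the tuple $(2;2,\ldots,2)$ of modulus $2^j$. You build exactly the same chains (quotients $2,3,\ldots$, respectively all equal to $2$), but you parametrize them by the starting value rather than by the terminal remainder. So you need neither the congruence criterion nor the length criterion. The paper's version is shorter only because that machinery is already in place. In Part~2 your explicit start $a_0=n-1$ is precisely the chain the paper locates by a congruence modulo $2^j$.

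The obstacle you flag in Part~1 resolves with no loss.
\begin{itemize}
\item The map $a_{k-1}\mapsto n-q_ka_{k-1}$ sends the window $(n/(q_k+1),n/q_k]$ onto $[0,n/(q_k+1))$. This contains the next window $(n/(q_{k+1}+1),n/q_{k+1})$ because $q_{k+1}=q_k+1$.
\item Inductively, the admissible $a_0$ are exactly the preimage of the last window $(n/(m+2),n/(m+1))$, taken open on the right so that $a_m>0$. The map $a_0\mapsto a_{m-1}$ is affine of slope $\pm m!$, so this preimage is an open interval of length exactly $n/(m+2)!$.
\item The length is governed by the last window alone. Read literally, your product $\prod_i 1/(q_i(q_i+1))$ would be far too small.
\item Hence $(m+2)!<n$ suffices, and $m=K-2$ gives exactly $\mathcal P(n)\ge K(n)-1$.
\end{itemize}
If you would rather not track constants, prepend the quotient $1$, i.e.\ start at $n-a_0\in(n/2,n)$. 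This adds a step at no cost in length and yields $\mathcal P(n)\ge K(n)$.

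In Part~2, delete the suggestion of extending $a_j=n-2^j$ ``past the range'' by the general recursion. Beyond that range the quotients exceed $2$ and the identity is false. It is also unnecessary. The quotient at step $k$ is determined by $a_{k-1}$, and for $k\le\lfloor\log_2(n-1)\rfloor$ one has $n/2<a_{k-1}=n-2^{k-1}<n$. So $a_k=n-2^k\ge 1$ for all these $k$, which already gives $\mathcal E(n,n-1)\ge\lfloor\log(n-1)/\log 2\rfloor+1$.
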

\subsection{Overview}
We briefly describe the organization of the paper and the principal ideas
underlying the proofs. We focus primarily on the Pierce setting, since the
Engel setting is treated by a parallel argument.

The first step is to decompose the iterated remainder set according to the
size of its elements. For \(t\geq 0\), write
\[
I_t(n):=
\left(\frac{n}{t+2},\frac{n}{t+1}\right]\cap\mathbb Z.
\]
For every fixed \(j\geq 1\), the positive elements of \(S_j(n)\) admit the
disjoint decomposition
\[
S_j(n)\setminus\{0\}
=
\coprod_{t\geq j+1}\bigl(S_j(n)\cap I_t(n)\bigr).
\]
We therefore study the contribution of each interval \(I_t(n)\) separately.

For \(j\geq 2\) and \(t\geq j+1\), define
\[
\mathcal D_P(j,t)
:=
\left\{
(p;q_2,\ldots,q_j):
p \text{ is prime and }
p<q_2<\cdots<q_j\leq t
\right\}.
\]
Repeatedly unwinding the remainder relations
\[
n=q_k r_{k-1}+r_k,
\qquad 2\leq k\leq j,
\]
shows that a trajectory terminating at an element of
\(S_j(n)\cap I_t(n)\) is encoded by a tuple
\(\mathbf d\in\mathcal D_P(j,t)\). In Section~3,
Lemma~\ref{lem:congruence-criterion pierce} makes this description
precise by showing that apart from a possible endpoint, an integer \(r\in I_t(n)\)
belongs to \(S_j(n)\) if and only if
\(
r\equiv A_P(\mathbf{d})n\pmod{m(\mathbf d)}
\)
for some \(\mathbf{d}\in\mathcal D_P(j,t)\), where \(A_P(\mathbf d)\) and \(m(\mathbf d)\) are
explicitly defined functions of the tuple \(\mathbf d\) in equation \eqref{eq:def-A}.

This congruence characterization reduces the local counting problem to
a finite problem in a cyclic group. Define
\[
M_P(j,t)
:=
\operatorname{lcm}
\left\{
m(\mathbf d):\mathbf d\in\mathcal{D}_P(j,t)
\right\}.
\]
For \(a\in\mathbb Z/M_P(j,t)\mathbb Z\), let
\[
U_{j,t}^{(P)}(a)
:=
\bigcup_{\mathbf d\in\mathcal{D}_P(j,t)}
\left\{
x\in\mathbb Z/M_P(j,t)\mathbb Z:
x\equiv A_P(\mathbf d)a\pmod{m(\mathbf d)}
\right\},
\]
and define its density by
\[
D_{j,t}^{(P)}(a)
:=
\frac{|U_{j,t}^{(P)}(a)|}{M_P(j,t)}.
\]
From this one concludes that (see Lemma~\ref{lem:interval-count pierce} below)
\[
\frac{\left|S_j(n)\cap I_t(n)\right|}{n}
=
\frac{
D_{j,t}^{(P)}
\bigl(n\bmod M_P(j,t)\bigr)
}{
(t+1)(t+2)
}
+
O_{j,t}\left(\frac{1}{n}\right).
\]
The main structural input is the cyclic dilation principle proved in
Lemma~\ref{lemma: cyclic dilation}. Applied to the union above, it shows
that
\[
D_{j,t}^{(P)}(0)
\leq
D_{j,t}^{(P)}(a)
\leq
D_{j,t}^{(P)}(u)
\]
for every \(a\in\mathbb Z/M_P(j,t)\mathbb Z\) and every unit
\(u\in(\mathbb Z/M_P(j,t)\mathbb Z)^\times\). Moreover, all units give
the same density. We therefore set
\[
\alpha_P(j,t):=D_{j,t}^{(P)}(0)
\qquad\text{and}\qquad
\beta_P(j,t):=D_{j,t}^{(P)}(1).
\]
The discussion above gives
\[
\liminf_{n\to\infty}\frac{s_j(n)}{n}
=
\sum_{t=j+1}^{\infty}
\frac{\alpha_P(j,t)}{(t+1)(t+2)}
\]
and
\[
\limsup_{n\to\infty}\frac{s_j(n)}{n}
=
\sum_{t=j+1}^{\infty}
\frac{\beta_P(j,t)}{(t+1)(t+2)}.
\]
It also allows one to show that the $\liminf$ and $\limsup$ are attained along the sequences \(n=m!\) and \(n=m!+1\), respectively. See Section~\ref{sec: The Cyclic Dilation Lemma} below for the details. 

In Section~\ref{sec: Characterizing the Convergence}, we establish the non-convergence of $\{s_j(n)/n\}_n$. It suffices to find one value of \(t\) for which
\(\beta_P(j,t)>\alpha_P(j,t)\), since the cyclic dilation lemma gives
\[
\beta_P(j,t)\geq\alpha_P(j,t)
\]
for every \(t\geq j+1\). We take \(t=j+2\). The tuples in $\mathcal{D}_P(j, j+2)$ give a particularly easy-to-study union of residue classes. For the dilation parameter \(0\), the corresponding residue classes are
subgroups of \(\mathbb Z/(j+2)!\mathbb Z\) and have substantial
overlap. For the dilation parameter \(1\), the shifted classes are essentially pairwise
disjoint. This gives
\[
\beta_P(j,j+2)-\alpha_P(j,j+2)
\geq
\frac{j-1}{(j+2)!};
\]
see Proposition~\ref{propn: limit non-existence pierce} below. It follows that
\[
\begin{aligned}
\limsup_{n\to\infty}\frac{s_j(n)}{n}
-
\liminf_{n\to\infty}\frac{s_j(n)}{n}
&=
\sum_{t=j+1}^{\infty}
\frac{\beta_P(j,t)-\alpha_P(j,t)}
     {(t+1)(t+2)}  \geq
\frac{j-1}{(j+4)!}>0.
\end{aligned}
\]
This proves that \(s_j(n)/n\) does not converge for any fixed
\(j\geq 2\). The remainder of the section is devoted to obtaining bounds on $\liminf$ and $\limsup$ as $j$ varies.

The Engel setting is treated by the same general strategy, with only minor modifications to the relevant intervals, quotient conditions, and congruence relations. For ease of comparison, the Engel arguments are presented alongside their Pierce counterparts throughout the paper. On a first reading, however, the reader may find it helpful to follow one setting in isolation before comparing the two parallel developments.

Finally, in Section~\ref{sec: Length Functions and Iterated Remainder Sets}, we first finish the proof of Lemma~\ref{lemma: engel length criterion}. Combining
the characterizations in Lemmas~\ref{lemma: pierce length criterion} and~\ref{lemma: engel length criterion} with the congruence criteria described in Section~\ref{sec: Congruence Characterization of the Remainder Sets} gives derivations of the lower bounds for \(\mathcal P(n)\) and \(\mathcal E(n)\).

\subsection{Notation}\label{subsec: notation}
All logarithms are natural logarithms. A hat over an entry of a tuple indicates that the
entry is omitted; for example,
\[
(3,4,\ldots,\widehat{a},\ldots,t)
\]
denotes the tuple obtained from \((3,4,\ldots,t)\) by deleting \(a\).

For integers \(a\) and \(M\geq1\), the expression \(a\bmod M\) denotes the
least nonnegative residue of \(a\) modulo \(M\), or the corresponding
element of \(\Z/M\Z\) when the ambient cyclic group is clear from the
context. 

For a function \(f\) and a nonnegative function \(g\), the notation
\[
f(x)\ll g(x)
\qquad\text{or, equivalently,}\qquad
f(x)=O(g(x))
\]
means that \(|f(x)|\leq Cg(x)\) for some constant \(C>0\) throughout the
range under consideration. Subscripts indicate the permitted dependence
of the implied constant; for example,
\[
f(x)\ll_{j,t}g(x)
\qquad\text{and}\qquad
f(x)=O_{j,t}(g(x))
\]
allow the constant to depend on \(j\) and \(t\), but on no other varying
parameter. For nonnegative functions $f$ and $g$, we write
\(
f(x)\asymp g(x)
\)
when both \(f(x)\ll g(x)\) and \(g(x)\ll f(x)\) hold. Furthermore,
\(
f(x)=o(g(x))
\)
means that \(f(x)/g(x)\to0\), and
\(
f(x)\sim g(x)
\)
means that \(f(x)/g(x)\to1\).

\subsection{Acknowledgments}
The authors would like to thank Ingrid Vukusic for many useful discussions and for pointing out the important reference~\cite{Simpson1986ExactCoverings}, specifically Lemma 2.3, which was used in an earlier draft of the proof and inspired the current version.

P. Gokhale thanks his advisors, Misha Khodak and Sandeep Silwal, for their encouragement, for fostering an environment that supports AI-assisted mathematical research, and for providing access to ChatGPT for Academic Researchers.

S. Jain would like to thank his advisor, Kaisa Matom{\"a}ki, for her guidance and encouragement and for providing him with access to a ChatGPT Business subscription. S. Jain was supported by the Research Council of Finland (grant numbers 346307, 364214, and 370133) and by the University of Turku Graduate School's Exactus fellowship while working on this project.

\subsection{AI Usage}
This preprint grew out of extensive collaboration between the authors and AI systems. The AI systems generated most of the main mathematical ideas going in the proofs, while the authors verified, refined, and wrote up the resulting arguments.

The project began in December~2025, when Gemini~3 Pro suggested the idea of using a coset-alignment lemma. We later found that a relevant version of the coset-alignment lemma appears in~\cite[Lemma 2.3]{Simpson1986ExactCoverings}. From January~2026 onward, we used ChatGPT to simplify and extend the arguments. ChatGPT supplied the current formulation of the cyclic dilation principle in Lemma~\ref{lemma: cyclic dilation}, which lies at the heart of our approach. At every stage, the authors independently checked, corrected, and developed the AI-generated material. The resulting arguments were then used as inputs for further prompts, creating an iterative process. The authors take full responsibility for all mathematical content in this preprint.

\section{Expressions for the Liminf and Limsup}\label{sec: Expressions for the Liminf and Limsup}
To assist with the proofs of the theorems in the introduction, we will prove convenient expressions for the $\liminf$ and the $\limsup$ of sequences $\{s_j(n)/n\}_n$ and $\{t_j(n)/n\}_n$ for every fixed $j\geq 1$. To write the expressions succinctly, we introduce some notation.

Let $j\geq 1$ be fixed. For $t\geq 2$, define the sets
\begin{align*}
    \mathcal{D}_P(j,t):=\{(p;q_2,\ldots,q_j):p\text{ prime},\ p<q_2<q_3<\cdots<q_j\leq t\}\quad\text{and}
\end{align*}
\begin{align*}
    \mathcal{D}_E(j,t):=\{(p;q_2,\ldots,q_j):p\text{ prime},\ p\leq q_2\leq q_3\leq\cdots\leq q_j\leq t\}.
\end{align*}
We note that when $j = 1$, the above sets collapse to

\begin{align*}
    \mathcal{D}_{P}(1, t)=\mathcal{D}_{E}(1,t) = \{p:p\text{ prime},\ p\leq t\}.
\end{align*}
Let $q_2,q_3,\ldots,q_j\geq 1$ be natural numbers. Set
\[
Q_k:=\prod_{m=k}^j q_m,\qquad Q_{j+1}:=1.
\]
Define, for $\varepsilon\in \{-1, 1\}$,
\[
A(q_2,\ldots,q_j; \varepsilon)
:=-\sum_{i=2}^{j+1}(-\varepsilon)^{j-i}Q_i.
\]
Notice that we have the recurrence
\[A(q_2, \dots, q_k;\varepsilon) = \varepsilon (1-q_kA(q_2,\dots, q_{k-1}; \varepsilon)). \]
For $d=(p;q_2,\ldots,q_j)$ put
\begin{equation} \label{eq:def-A}
m(d):=pQ_2=pq_2q_3\cdots q_j,
\quad
A_P(d):=A(q_2,\ldots,q_j; 1), \quad\text{and}\quad A_E(d):=A(q_2,\ldots,q_j; -1).    
\end{equation}

We follow the convention

\begin{equation} \label{eq:convention-A}
    m(\varnothing) = 1, \quad A_P(\varnothing) = 1 \quad\text{and}\quad A_E(\varnothing) = -1.    
\end{equation}

Let $\sigma\in\{P, E\}$. For a nonempty set $B\subseteq\mathcal D_\sigma(j,t)$, let
$ 1_{\CRT}(B;\sigma)$ be $1$ if
$$\gcd(m(d),m(e))
\mid A_\sigma(d)-A_\sigma(e)
\quad\text{for every }d,e\in B$$
and $0$ otherwise.

\begin{proposition}\label{propn: liminf expression pierce}
    For every fixed $j\geq 1$, we have
    \begin{align*}
        \liminf_{n\to\infty}\frac{s_j(n)}{n} = \sum_{t = j+1}^{\infty}\frac{\alpha_P(j, t)}{(t+1)(t+2)},
    \end{align*}
    where
    \begin{align*}
        \alpha_P(j, t):= \sum_{\varnothing\neq B\subseteq\mathcal{D}_P(j,t)}(-1)^{|B|+1}\frac{1}{\lcm_{d\in B}m(d)}.
    \end{align*}
    Additionally, the $\liminf$ is attained along the subsequence $(m!)_{m\geq 1}$.
\end{proposition}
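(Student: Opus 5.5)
The plan follows the route sketched in the Overview. First I establish the congruence characterization: for fixed $j\ge1$ and $t\ge j+1$, apart from $O_{j,t}(1)$ endpoint exceptions, an integer $r\in I_t(n)$ lies in $S_j(n)$ if and only if $r\equiv A_P(\mathbf d)n \pmod{m(\mathbf d)}$ for some $\mathbf d\in\mathcal D_P(j,t)$.

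To prove it, I unwind a trajectory $k=r_0\mapsto r_1\mapsto\cdots\mapsto r_j=r$, where $r_k=n\bmod r_{k-1}$ and $n=q_kr_{k-1}+r_k$. Since $r\in I_t(n)$ and the $r_i$ strictly decrease, the quotients satisfy $q_j\le t$. Each quotient strictly exceeds the previous one, because $r_k<r_{k-1}$ forces $\lfloor n/r_k\rfloor>\lfloor n/r_{k-1}\rfloor$ up to the endpoint cases. This gives $q_1<q_2<\cdots<q_j\le t$ with $q_1\ge 2$, since $r_0\le n/2$. Back-substitution with the recurrence $A(q_2,\dots,q_k;1)=1-q_kA(q_2,\dots,q_{k-1};1)$ gives
\[
r\equiv A_P(q_2,\dots,q_j)\,n \pmod{q_2\cdots q_j},
\]
together with the extra condition $r_1\equiv n\pmod{q_1}$, i.e.\ $r_0$ exists. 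That extra condition becomes $r\equiv A_P n\pmod{q_1Q_2}$.

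Next, $q_1$ can be replaced by a prime: if $p\mid q_1$, then the condition mod $q_1$ implies the one mod $p$. Conversely, given the congruence with the least prime $p<q_2$, one can reconstruct a trajectory with $q_1=p$, choosing each $r_{k-1}=(n-r_k)/q_k$ and checking that it lands in the right interval. That interval check is exactly the content of Lemma~\ref{lem:congruence-criterion pierce}, stated in the Overview, and I would prove it first.

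Counting follows next. Let $M=M_P(j,t)=\lcm_{\mathbf d}m(\mathbf d)$. The admissible $r\in I_t(n)$ form a union of residue classes mod $M$ that depends only on $a=n\bmod M$. The interval $I_t(n)$ has length $n/((t+1)(t+2))+O(1)$, so $|S_j(n)\cap I_t(n)|/n = D^{(P)}_{j,t}(a)/((t+1)(t+2))+O_{j,t}(1/n)$. For $a=0$ the union of subgroups $m(\mathbf d)\Z/M\Z$ has density given by inclusion–exclusion over $B$: the intersection over $B$ is the subgroup of multiples of $\lcm_{d\in B}m(d)$. This yields exactly $\alpha_P(j,t)$. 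Then I invoke the cyclic dilation Lemma~\ref{lemma: cyclic dilation}, which gives $D^{(P)}_{j,t}(a)\ge D^{(P)}_{j,t}(0)$ for every $a$.

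For the tail, the key is that all positive elements of $S_j(n)$ are at most $n/(j+1)$. Elements lying in $\bigcup_{t>T}I_t(n)$ number at most $n/(T+2)$, uniformly in $n$. Truncating at $T$ therefore gives the lower bound
\[
\liminf_{n\to\infty} s_j(n)/n\ge \sum_{t=j+1}^T\alpha_P/((t+1)(t+2)),
\]
and letting $T\to\infty$ gives $\ge$ for the full sum. For the matching upper bound along $n=m!$, note that $M_P(j,t)\mid t!^{\,j}$ divides $m!$ once $m$ is large, since each $m(\mathbf d)$ divides a product of numbers $\le t$. So $n\bmod M=0$ for every fixed $t$ eventually. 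The tail bound then gives
\[
\limsup_{m} s_j(m!)/m! \le \sum_{t\le T}\alpha_P/((t+1)(t+2))+1/(T+2).
\]
Letting $T\to\infty$ yields equality and shows the $\liminf$ is attained along $m!$.

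I expect the main obstacle to be the exact congruence characterization. Showing that every residue in the union genuinely arises from a trajectory, including the reduction of $q_1$ to a prime, the strict-increase constraints, and the endpoint exceptions, is more delicate than the counting or the dilation step.
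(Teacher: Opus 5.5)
Your proposal is correct and follows the paper's route: the congruence criterion (Lemma~\ref{lem:congruence-criterion pierce}), the band count (Lemma~\ref{lem:interval-count pierce}), inclusion--exclusion at $a=0$, the cyclic dilation Lemma~\ref{lemma: cyclic dilation}, the tail bound, and $L_P(j,T)\mid m!$. Your explicit sandwich along $n=m!$ makes the limit interchange slightly more transparent than the paper's $\lim_{T}\min_a F^{(P)}_{j,T}(a)$ formulation.

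One slip needs fixing. To discard the bands $t\le j$, you need every positive element of $S_j(n)$ to be $<n/(j+2)$, not merely $\le n/(j+1)$; the weaker bound leaves $I_j(n)$ unaccounted for in your upper bound along $m!$. The stronger bound follows from $n=q_kr_{k-1}+r_k>(q_k+1)r_k$, which is also the correct reason the quotients strictly increase (with no endpoint exceptions): since $2\le q_1<\cdots<q_j$, it gives $n>(q_j+1)r\ge(j+2)r$.
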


\begin{proposition}\label{propn: limsup expression pierce}
    For every fixed $j\geq 1$, we have
    \begin{align*}
        \limsup_{n\to\infty}\frac{s_j(n)}{n} = \sum_{t = j+1}^{\infty}\frac{\beta_P(j, t)}{(t+1)(t+2)},
    \end{align*}
    where
    \begin{align*}
        \beta_P(j, t):= \sum_{\varnothing\neq B\subseteq\mathcal{D}_P(j,t)}(-1)^{|B|+1}\frac{1_{\CRT}(B; P)}{\lcm_{d\in B}m(d)}.
    \end{align*}
Additionally, the $\limsup$ is attained along the subsequence 
    $(m!+1)_{m\geq 1}$.
\end{proposition}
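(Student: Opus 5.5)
The proof has two parts. First, a congruence description of $S_j(n)$ on each interval $I_t(n)$ makes the local count $|S_j(n)\cap I_t(n)|/n$ asymptotic to a density of a union of residue classes. Second, a dilation argument shows that $n\equiv 1$ modulo a suitable factorial maximizes every such density simultaneously.

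\textbf{Step 1: congruence criterion.} For $r\in I_t(n)$ with $t\ge j+1$, unwind the chain $n=q_kr_{k-1}+r_k$ for $2\le k\le j$, starting from $r_1=n\bmod k_0\in S_1(n)$. Writing $r_1\equiv n \pmod p$ with $p$ the relevant prime quotient, iterating gives, up to endpoint effects,
\[
r\in S_j(n)\iff r\equiv A_P(\mathbf d)\,n \pmod{m(\mathbf d)} \text{ for some } \mathbf d\in\mathcal D_P(j,t).
\]
The quotients are strictly increasing and bounded by $t$, because $r\in I_t(n)$ forces $\lfloor n/r\rfloor=t+1$ at the final stage. The recurrence for $A(\cdot;1)$ is exactly what propagates the congruence from one stage to the next.

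\textbf{Step 2: the local density.} Set $M=M_P(j,t)$. Since the condition depends only on $r \bmod M$ and $I_t(n)$ has length $n/((t+1)(t+2))+O(1)$, we get
\[
\frac{|S_j(n)\cap I_t(n)|}{n}=\frac{D(n\bmod M)}{(t+1)(t+2)}+O_{j,t}(1/n),
\]
where $D(a)$ is the density of $U(a)=\bigcup_{\mathbf d}\{x\equiv A_P(\mathbf d)a \pmod{m(\mathbf d)}\}$.

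\textbf{Step 3: evaluating $D(1)$.} By inclusion--exclusion, $D(a)$ is a signed sum over nonempty $B\subseteq\mathcal D_P(j,t)$ of the density of $\bigcap_{\mathbf d\in B}$ of the corresponding classes. By the Chinese remainder theorem, this intersection is nonempty exactly when the classes agree pairwise modulo the gcds, and then it has density $1/\lcm_{\mathbf d\in B} m(\mathbf d)$.
\begin{itemize}
\item For $a=1$, pairwise compatibility is precisely $\gcd(m(\mathbf d),m(\mathbf e))\mid A_P(\mathbf d)-A_P(\mathbf e)$, which is the indicator $1_{\CRT}(B;P)$. So $D(1)=\beta_P(j,t)$.
\item The same computation gives $D(u)=\beta_P(j,t)$ for every unit $u$, since multiplying all residues by a unit preserves compatibility.
\end{itemize}

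\textbf{Step 4: upper bound via cyclic dilation.} I need $D(a)\le D(1)$ for every $a$. This is the content of Lemma~\ref{lemma: cyclic dilation}. If I had to prove it directly, I would factor $a=gu$ with $g\mid M$ and $u$ a unit. Dilating by $g$ sends each class to one containing the image under $x\mapsto gx$ of the corresponding class for $u$. I would then argue that the map $x\mapsto gx$, restricted to fibres, cannot increase the union density. This amounts to comparing the fibres of $\mathbb Z/M\to\mathbb Z/(M/\gcd(g,M))$ coset by coset.

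\textbf{Step 5: summing over $t$.} Sum the local estimates over $t\le T$. The tail $t>T$ contributes at most $\sum_{t>T}1/((t+1)(t+2))\le 1/T$ uniformly in $n$, since $|S_j(n)\cap I_t(n)|\le|I_t(n)|$. Hence
\[
\limsup_{n\to\infty}\frac{s_j(n)}{n}\le\sum_{t\ge j+1}\frac{\beta_P(j,t)}{(t+1)(t+2)}.
\]
For the matching lower bound, take $n=m!+1$. For each fixed $t$ we have $M_P(j,t)\mid t!^{\,j}$, so $M_P(j,t)\mid m!$ once $m$ is large, giving $n\equiv1 \pmod{M_P(j,t)}$. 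Then every $t\le T$ achieves $\beta_P(j,t)$ up to $o(1)$, so the $\limsup$ is attained along $(m!+1)$. Letting $T\to\infty$ finishes.

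\textbf{Main obstacles.} The hardest step is Step 1: getting the exact set $\mathcal D_P(j,t)$ right, in particular that the first quotient must be prime and the strict monotonicity, while controlling the endpoint errors. The other delicate point is the monotonicity statement of Step 4, for which I rely on Lemma~\ref{lemma: cyclic dilation}.
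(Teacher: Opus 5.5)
Your proposal is correct and follows the paper's route essentially step for step: the congruence criterion of Lemma~\ref{lem:congruence-criterion pierce} on each $I_t(n)$, the periodic local count of Lemma~\ref{lem:interval-count pierce}, the cyclic dilation lemma showing that units maximize every $D_{j,t}^{(P)}$, the uniform tail bound, and the fact that $m!+1\equiv 1$ modulo every $M_P(j,t)$ with $t\le T$ once $m$ is large. Your Step~3 usefully derives, via the pairwise-compatibility form of the CRT, why $D_{j,t}^{(P)}(1)$ equals the inclusion--exclusion expression with $1_{\CRT}(B;P)$, which the paper only asserts; you also implicitly use that $S_j(n)\cap I_t(n)=\varnothing$ for $t\le j$ when you sum from $t=j+1$, and this follows from the same strictly increasing quotient chain in your Step~1.
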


We have the following analogous results for the Engel-type iterated remainder sets $T_j(n)$ and their cardinalities $t_j(n)$.
\begin{proposition}\label{propn: liminf expression engel}
    For every fixed $j\geq 1$, we have
    \begin{align*}
        \liminf_{n\to\infty}\frac{t_j(n)}{n} = \sum_{t = 2}^{\infty}\frac{\alpha_E(j, t)}{t(t-1)},
    \end{align*}
    where
    \begin{align*}
        \alpha_E(j, t):= \sum_{\varnothing\neq B\subseteq\mathcal{D}_E(j,t)}(-1)^{|B|+1}\frac{1}{\lcm_{d\in B}m(d)}.
    \end{align*}
    Additionally, the $\liminf$ is attained along the sequence $(m!)_{m\geq 1}$.
\end{proposition}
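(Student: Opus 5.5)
We mirror the Pierce argument (Proposition~\ref{propn: liminf expression pierce}), adapted to the Engel recursion. The proof has four steps: a congruence criterion for each layer, counting in a single layer, a dilation inequality, and summation over layers.

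\textbf{Step 1: the layers and the congruence criterion.} For \(t\geq 2\) define the Engel layers
\[
J_t(n):=\left[\frac{n}{t},\frac{n}{t-1}\right)\cap\Z .
\]
The quotient of \(r\in J_t(n)\) is then \(\lceil n/r\rceil=t\) (up to the endpoint). We first prove that, apart from \(O_{j,t}(1)\) exceptional values near the endpoints, an integer \(r\in J_t(n)\) lies in \(T_j(n)\) if and only if
\[
r\equiv A_E(\mathbf d)\,n \pmod{m(\mathbf d)}\qquad\text{for some }\mathbf d\in\mathcal D_E(j,t).
\]
To see this, unwind the chain \(q_k r_{k-1}=n+r_k\) backwards from \(r_j=r\). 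Each step has quotient \(q_k\), and since \(r_k<r_{k-1}\) the quotients are nondecreasing. Expressing \(r_{k-1}=(n+r_k)/q_k\) recursively and using the recurrence for \(A(\cdot;-1)\) gives the residue \(A_E\). The first step, from \(T_0=\{1,\dots,n-1\}\), places no constraint beyond divisibility. We may assume that this first quotient is a prime \(p\), because any divisor condition is implied by one with a prime factor. This yields the set \(\mathcal D_E(j,t)\). The case \(j=1\) uses the convention \eqref{eq:convention-A}.

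\textbf{Step 2: counting in one layer.} The layer \(J_t(n)\) has length \(n/(t(t-1))+O(1)\). Counting the union of the residue classes above modulo \(M_E(j,t)\) by inclusion--exclusion gives
\[
\frac{|T_j(n)\cap J_t(n)|}{n}=\frac{D^{(E)}_{j,t}(n\bmod M_E(j,t))}{t(t-1)}+O_{j,t}\!\left(\frac1n\right).
\]

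\textbf{Step 3: the dilation inequality.} By the cyclic dilation principle (Lemma~\ref{lemma: cyclic dilation}), \(D^{(E)}_{j,t}(a)\geq D^{(E)}_{j,t}(0)\) for every residue \(a\). When \(a=0\), every class \(\{x\equiv 0 \bmod m(\mathbf d)\}\) is a subgroup. By CRT the intersection of the classes indexed by \(B\) is the subgroup of multiples of \(\lcm_{d\in B} m(d)\), with density \(1/\lcm_{d\in B} m(d)\). Inclusion--exclusion therefore gives exactly \(D^{(E)}_{j,t}(0)=\alpha_E(j,t)\). When \(n=m!\) with \(m\geq M_E(j,t)\), we have \(n\equiv 0 \pmod{M_E(j,t)}\), so each fixed layer attains this minimum.

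\textbf{Step 4: summing the layers (the main obstacle).} Summing over \(t\leq T\) gives
\[
\liminf_{n\to\infty}\frac{t_j(n)}{n}\ \geq\ \sum_{t\le T}\frac{\alpha_E(j,t)}{t(t-1)},
\]
and along \(n=m!\) the first \(T\) layers contribute \(\sum_{t\le T}\alpha_E(j,t)/(t(t-1))+o(1)\). Since \(\alpha_E(j,t)\leq 1\), it remains to bound the tail \(\sum_{t>T}|T_j(n)\cap J_t(n)|/n\) uniformly in \(n\) by something that tends to \(0\) as \(T\to\infty\). The trivial bound is \(|T_j(n)\cap[1,n/T)|/n\leq 1/T\). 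Letting \(T\to\infty\) then gives equality of the \(\liminf\) with the series, attained along \((m!)\).

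The delicate point is Step 1: handling the endpoints and nonprimitive divisors uniformly, and checking the sign conventions of \(A_E\) against \((-n)\bmod k\). Most of the care should go there.
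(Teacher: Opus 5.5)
Your proposal is correct and follows essentially the paper's route: the band-wise congruence criterion (Lemma~\ref{lem:congruence-criterion engel}), the per-band count with density $D^{(E)}_{j,t}(n\bmod M_E(j,t))$, the cyclic dilation lemma showing every band is simultaneously minimized at residue $0$ (where inclusion--exclusion over subgroups gives $\alpha_E(j,t)$), the uniform tail bound $n/T$, and $m!\equiv 0 \pmod{L_E(j,T)}$ for attainment. The one part you only sketch is the converse half of Step~1, and it goes through as in the paper: $r<n/(t-1)$ and $q_j\le t$ give $r_{j-1}>r_j$, the ordering $p\le q_2\le\cdots\le q_j$ propagates strict decrease back to $r_0>r_1$, and $p\ge 2$ gives $r_0<n$ (your half-open bands $[n/t,n/(t-1))$ in fact avoid the paper's single endpoint exception).
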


\begin{proposition}\label{propn: limsup expression engel}
    For every fixed $j\geq 1$, we have
    \begin{align*}
        \limsup_{n\to\infty}\frac{t_j(n)}{n} = \sum_{t = 2}^{\infty}\frac{\beta_E(j, t)}{t(t-1)},
    \end{align*}
    where
    \begin{align*}
        \beta_E(j, t):= \sum_{\varnothing\neq B\subseteq\mathcal{D}_E(j,t)}(-1)^{|B|+1}\frac{1_{\CRT}(B; E)}{\lcm_{d\in B}m(d)}.
    \end{align*}
Additionally, the $\limsup$ is attained along the subsequence 
    $(m!+1)_{m\geq 1}$.
\end{proposition}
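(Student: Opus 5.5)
The plan follows the same route as for Proposition~\ref{propn: limsup expression pierce}, with the Pierce intervals \(I_t(n)\) replaced by Engel intervals. I expect these to be
\[
J_t(n):=\left(\frac{n}{t},\frac{n}{t-1}\right]\cap\Z,\qquad t\geq 2,
\]
which have length \(n/(t(t-1))+O(1)\). This matches the weights in the statement. The positive elements of \(T_j(n)\) split disjointly into the sets \(T_j(n)\cap J_t(n)\).

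\textbf{Step 1 (congruence criterion).} Unwind \(q_{k}r_{k-1}=n+r_k\) with \(q_k=\lceil n/r_{k-1}\rceil\). I will show that, apart from \(O_{j,t}(1)\) boundary exceptions, an integer \(r\in J_t(n)\) lies in \(T_j(n)\) if and only if
\[
r\equiv A_E(\mathbf d)\,n \pmod{m(\mathbf d)}\quad\text{for some }\mathbf d\in\mathcal D_E(j,t).
\]
This is the Engel analogue of Lemma~\ref{lem:congruence-criterion pierce}.
\begin{itemize}[leftmargin=*]
\item The quotients are now only nondecreasing. This is why \(\mathcal D_E\) allows equalities, and it explains the sign \(\varepsilon=-1\) in the recurrence \(A(q_2,\dots,q_k;\varepsilon)=\varepsilon(1-q_kA(q_2,\dots,q_{k-1};\varepsilon))\).
\item The first step, from \(T_0(n)\) to \(T_1(n)\), contributes the prime \(p\). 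Any composite first quotient can be replaced by one of its prime factors, as in the \(j=1\) analysis.
\item For the converse, given a residue class solution in \(J_t(n)\), I reconstruct the chain backward. The condition \(r>n/t\) forces every intermediate quotient to lie in the range required by \(\mathcal D_E(j,t)\).
\end{itemize}
I expect this step to be the main obstacle. The delicate points are the bookkeeping of the ceiling inequalities when consecutive quotients coincide, and showing that the failures occur only \(O_{j,t}(1)\) times, near the endpoints of \(J_t(n)\).

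\textbf{Step 2 (local density).} Let \(M=M_E(j,t)\) be the lcm of the moduli \(m(\mathbf d)\), \(\mathbf d\in\mathcal D_E(j,t)\). Step~1 then gives
\[
|T_j(n)\cap J_t(n)|=\frac{n}{t(t-1)}\,D^{(E)}_{j,t}(n\bmod M)+O_{j,t}(1),
\]
where \(D^{(E)}_{j,t}(a)\) is the density in \(\Z/M\Z\) of the union of the classes \(A_E(\mathbf d)a \bmod m(\mathbf d)\).

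I compute \(D^{(E)}_{j,t}(1)\) by inclusion–exclusion over nonempty \(B\subseteq\mathcal D_E(j,t)\):
\begin{itemize}[leftmargin=*]
\item By the Chinese remainder theorem, the classes indexed by \(B\) intersect exactly when \(\gcd(m(d),m(e))\mid A_E(d)-A_E(e)\) for all \(d,e\in B\), that is, when \(1_{\CRT}(B;E)=1\).
\item When they intersect, the intersection is a single class modulo \(\lcm_{d\in B}m(d)\).
\end{itemize}
Hence \(D^{(E)}_{j,t}(1)=\beta_E(j,t)\). The cyclic dilation lemma (Lemma~\ref{lemma: cyclic dilation}) gives \(D^{(E)}_{j,t}(a)\leq D^{(E)}_{j,t}(1)\) for every \(a\).

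\textbf{Step 3 (summing over \(t\)).} Fix a cutoff \(T\).
\begin{itemize}[leftmargin=*]
\item \emph{Tail.} All elements of \(T_j(n)\cap J_t(n)\) with \(t>T\) are at most \(n/T\). So these intervals contribute at most \(n/T+1\).
\item \emph{Upper bound.} Combining the tail with Step~2 and the dilation inequality,
\[
\frac{t_j(n)}{n}\leq\sum_{t=2}^{T}\frac{\beta_E(j,t)}{t(t-1)}+\frac1T+O_{j,T}\!\left(\frac1n\right).
\]
\item \emph{Lower bound.} Every \(m(\mathbf d)\) with \(\mathbf d\in\mathcal D_E(j,t)\) is at most \(t^j\), so \(M_E(j,t)\) divides \((T^j)!\) for all \(t\leq T\). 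Take \(n=m!+1\) with \(m\geq T^j\). Then \(n\equiv1\pmod{M_E(j,t)}\) for every \(t\leq T\), and
\[
\frac{t_j(n)}{n}\geq\sum_{t=2}^{T}\frac{\beta_E(j,t)}{t(t-1)}-O_{j,T}\!\left(\frac1n\right).
\]
\end{itemize}
Letting \(n\to\infty\) and then \(T\to\infty\) identifies the \(\limsup\) with the full series \(\sum_{t\geq 2}\beta_E(j,t)/(t(t-1))\), which converges because \(\beta_E\leq1\). The same argument shows that the \(\limsup\) is attained along \((m!+1)_{m\geq1}\).
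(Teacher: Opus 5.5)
Your proposal is correct and follows essentially the same route as the paper: the Engel congruence criterion on $J_t(n)$ (Lemma~\ref{lem:congruence-criterion engel}), the local count $D^{(E)}_{j,t}(n\bmod M_E(j,t))\,n/(t(t-1))+O_{j,t}(1)$, and maximization at units via the cyclic dilation lemma, with $D^{(E)}_{j,t}(1)=\beta_E(j,t)$ by inclusion--exclusion and CRT, followed by the tail bound $n/T+1$ and the choice $n=m!+1\equiv 1$ modulo the relevant lcm. One small correction to your Step~1 sketch: in the converse direction the quotients are supplied by the tuple, so what you must check is $0<r_j<r_{j-1}<\cdots<r_0<n$; this uses the upper bound $r<n/(t-1)$ (so that $n>(q_j-1)r$) together with $p\le q_2\le\cdots\le q_j$, whereas the lower bound $r>n/t$ is what gives $q_j\le t$ in the forward direction.
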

\section{Congruence Characterization of the Remainder Sets}\label{sec: Congruence Characterization of the Remainder Sets}
In this section, we will establish elementary congruence characterizations for the elements lying in certain segments of the iterated remainder sets. We begin by introducing some notation.

For \(t\geq 0\), put
\[
I_t(n):=\left(\frac{n}{t+2},\frac{n}{t+1}\right]\cap \mathbb Z\quad\text{and}\quad I^{\circ}_t(n):= \left(\frac{n}{t+2},\frac{n}{t+1}\right)\cap \mathbb Z.
\]
Additionally, for $t\geq 2$, define
\begin{align*}
    J_t(n):=\left(\frac{n}{t},\frac{n}{t-1}\right]\cap \mathbb Z\quad\text{and}\quad J^{\circ}_t(n):= \left(\frac{n}{t},\frac{n}{t-1}\right)\cap \mathbb Z.
\end{align*}

We have the following elementary characterization of $S_1(n)$.
\begin{lemma}\label{lemma: S1 characterization}
    For $0\leq u<n$, one has $u\in S_1(n)$ if and only if there is a prime $p$ such that
    \begin{align*}
        p\mid n-u\quad\text{and}\quad p(u+1)\leq n-u.
    \end{align*}
\end{lemma}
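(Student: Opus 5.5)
We prove the two directions separately, working directly from the definition \(S_1(n)=\{n\bmod k:1\leq k\leq\lfloor n/2\rfloor\}\).

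\emph{Forward direction.} Suppose \(u=n\bmod k\) for some \(1\leq k\leq\lfloor n/2\rfloor\). Write \(n=qk+u\) with \(0\leq u<k\). Since \(k\leq n/2\), the quotient satisfies \(q=\lfloor n/k\rfloor\geq 2\). Let \(p\) be any prime divisor of \(q\); this is where \(q\geq 2\) is used. Then \(n-u=qk\) is divisible by \(p\). Moreover \(u<k\) gives \(u+1\leq k\), and \(p\mid q\) gives \(p\leq q\). Hence
\[
p(u+1)\leq qk=n-u,
\]
which is the required prime.

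\emph{Reverse direction.} Suppose a prime \(p\) satisfies \(p\mid n-u\) and \(p(u+1)\leq n-u\). Set \(k:=(n-u)/p\), which is a positive integer because \(n-u\geq p(u+1)>0\). Then \(n=pk+u\), and the second hypothesis says exactly \(k\geq u+1>u\geq 0\). So \(0\leq u<k\), and the division identity gives \(n\bmod k=u\).

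It remains to check that \(k\leq\lfloor n/2\rfloor\). Since \(p\geq 2\), we have \(k=(n-u)/p\leq (n-u)/2\leq n/2\), and as \(k\) is an integer, \(k\leq\lfloor n/2\rfloor\). Therefore \(u\in S_1(n)\).

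The whole argument is elementary; no earlier results are needed beyond the definitions. The two points requiring care are the following.
\begin{enumerate}
\item Passing from the quotient \(q\geq 2\) to a prime divisor \(p\leq q\) of \(q\) in the forward direction. This is what makes "prime" rather than "integer \(\geq 2\)" the correct formulation.
\item The edge case \(u=0\). In the forward direction this corresponds to \(k\mid n\). In the reverse direction it requires \(n\geq p\), which is guaranteed by the hypothesis \(p\leq n\).
\end{enumerate}
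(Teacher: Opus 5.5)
Your proof is correct and follows the paper's argument: in the forward direction you take a prime divisor $p$ of the quotient $q=\lfloor n/k\rfloor\geq 2$, and in the reverse direction you set $k=(n-u)/p$ and check that $u<k\leq n/2$. Your version only spells out details the paper leaves implicit, such as the positivity of $k$ and the passage to $k\leq\lfloor n/2\rfloor$ by integrality.
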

\begin{proof}
Indeed, if \(u\equiv n\bmod k\) with \(1\leq k\leq n/2\), then \(n-u=ck\) for some
integer \(c\geq 2\) and \(u<k\). Any prime divisor \(p\) of \(c\) satisfies the two displayed
conditions. Conversely, if these conditions hold, then \(k=(n-u)/p\) satisfies
\(u<k\leq n/2\), and hence \(u\equiv n\bmod k\).
\end{proof}
We provide an analogous characterization of $T_1(n)$ next.
\begin{lemma}\label{lemma: T1 characterization}
    For $0\leq u<n$, one has $u\in T_1(n)$ if and only if there is a prime $p$ such that
    \begin{align*}
        p\mid n+u\quad\text{and}\quad p(u+1)\leq n+u.
    \end{align*}
\end{lemma}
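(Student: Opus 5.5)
The plan is to mirror the proof of Lemma~\ref{lemma: S1 characterization}, replacing $n\bmod k$ by $(-n)\bmod k$. Since $T_0(n)=\{1,\ldots,n-1\}$, we have $u\in T_1(n)$ exactly when $u=(-n)\bmod k$ for some $1\leq k\leq n-1$. So the whole argument rests on one translation: the remainder condition $u=(-n)\bmod k$ with $k\leq n-1$ should correspond to a factorisation $n+u=ck$ with $c\geq 2$ and $u<k$.

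For the forward direction, suppose $u=(-n)\bmod k$ with $1\leq k\leq n-1$.
\begin{itemize}
\item By definition $0\leq u<k$ and $k\mid n+u$, so $n+u=ck$ for a positive integer $c$.
\item To see $c\geq 2$: if $c=1$ then $k=n+u\geq n$, contradicting $k\leq n-1$.
\item Let $p$ be any prime divisor of $c$. Then $p\mid n+u$.
\item Since $p\leq c$ and $u+1\leq k$, we get $p(u+1)\leq ck=n+u$.
\end{itemize}

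For the converse, suppose a prime $p$ satisfies $p\mid n+u$ and $p(u+1)\leq n+u$. Put $k=(n+u)/p$, which is a positive integer.
\begin{itemize}
\item The inequality $p(u+1)\leq n+u$ gives $k\geq u+1>u\geq 0$, so $k\geq 1$.
\item For the upper bound on $k$, use $p\geq 2$ together with $u\leq n-1$:
\[
k\leq \frac{n+u}{2}\leq \frac{2n-1}{2}<n,
\]
so $k\leq n-1$ and hence $k\in T_0(n)$.
\item Since $k\mid n+u$ and $0\leq u<k$, the integer $u$ is the least nonnegative residue of $-n$ modulo $k$. Therefore $u=(-n)\bmod k\in T_1(n)$.
\end{itemize}

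I do not expect any step to be a real obstacle. The only places needing care are the two endpoint checks. In the forward direction one must rule out $c=1$, which uses $k\leq n-1$. In the converse one must confirm $k\leq n-1$, which is where the hypothesis $u<n$ is used. The case $u=0$ should be checked separately. It is consistent with both directions: for example, $k=1$ gives $0\in T_1(n)$ for $n\geq 2$, and correspondingly any prime $p\mid n$ satisfies $p\leq n$.
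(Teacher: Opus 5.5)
Your proof is correct and follows essentially the same route as the paper. For the forward direction you write $n+u=ck$ with $c\geq 2$ and take a prime factor $p$ of $c$, and for the converse you set $k=(n+u)/p$ and check $u<k\leq (n+u)/2<n$. You also justify $c\geq 2$ explicitly (if $c=1$ then $k=n+u\geq n$), a step the paper asserts without comment.
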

\begin{proof}
    Suppose $u\equiv (-n)\bmod{k}$ for some $1\leq k< n$, then $n = qk-u$
    for an integer $q\geq 2$ and $u<k$, and hence $n+u = qk$. Choose any prime $p$ dividing $q$. Since $u<k$, we have
    \begin{align*}
        p(u+1)\leq q(u+1)\leq qk = n+u.
    \end{align*}
    Conversely, suppose that there is a prime $p$ such that $p\mid n+u$ and $p(u+1)\leq n+u$. Then we can write $n+u = pk$ for some integer $k := (n+u)/p$. Then $k\geq u+1>u$. Also, $       k\leq (n+u)/2<n.
$    Thus $k\in T_0(n)$, and $n = pk-u,$
    which implies that $u\equiv (-n)\bmod{k}$, and hence $u\in T_1(n)$.

\end{proof}
Before stating the generalized congruence criteria, we prove an auxiliary lemma.

\comO{In the lemma below, can we use $N$ instead of $A$ maybe, if it dosen't hold a particular significance from before.}
\comS{I think here A makes sense as it is basically the same function}

\begin{lemma}\label{lem:backward-congruence-propagation}
Let \(j\geq 1\), let \(\varepsilon\in\{1,-1\}\), and let
\(q_1,q_2,\ldots,q_j\) be positive integers. Define
\[
A_{\varepsilon,1}:=\varepsilon,
\qquad
A_{\varepsilon,k}
:=
\varepsilon\bigl(1-q_kA_{\varepsilon,k-1}\bigr),
\qquad 2\leq k\leq j.
\]
Suppose that
\[
r_j\equiv A_{\varepsilon,j}n
\pmod{q_1q_2\cdots q_j}.
\]
For \(2\leq k\leq j\), define recursively
\[
r_{k-1}:=\frac{n-\varepsilon r_k}{q_k}.
\]
Then \(r_1,\ldots,r_{j-1}\) are integers and
\[
r_k\equiv A_{\varepsilon,k}n
\pmod{q_1q_2\cdots q_k},
\qquad 1\leq k\leq j.
\]
\end{lemma}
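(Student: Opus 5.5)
We argue by downward induction on \(k\), starting at \(k=j\), where the congruence is the hypothesis. Write \(M_k:=q_1q_2\cdots q_k\). The inductive claim at stage \(k\) is that \(r_k\) is an integer and \(r_k\equiv A_{\varepsilon,k}n\pmod{M_k}\). Assuming this for some \(k\) with \(2\leq k\leq j\), we deduce the same statement for \(k-1\).

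The first step is integrality of \(r_{k-1}\). Substitute the recurrence \(A_{\varepsilon,k}=\varepsilon(1-q_kA_{\varepsilon,k-1})\) into the congruence for \(r_k\). Multiplying by \(\varepsilon\) and using \(\varepsilon^2=1\) gives
\[
\varepsilon r_k\equiv n-q_kA_{\varepsilon,k-1}n\pmod{M_k}.
\]
Hence
\[
n-\varepsilon r_k\equiv q_kA_{\varepsilon,k-1}n\pmod{M_k}.
\]
Since \(q_k\mid M_k\), this shows \(q_k\mid n-\varepsilon r_k\), so \(r_{k-1}=(n-\varepsilon r_k)/q_k\) is an integer.

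The second step is the congruence for \(r_{k-1}\). Write \(n-\varepsilon r_k=q_kA_{\varepsilon,k-1}n+cM_k\) for some integer \(c\). Because \(M_k=q_kM_{k-1}\), dividing by \(q_k\) gives
\[
r_{k-1}=A_{\varepsilon,k-1}n+cM_{k-1},
\]
which is exactly \(r_{k-1}\equiv A_{\varepsilon,k-1}n\pmod{M_{k-1}}\). This division is exact, because \(q_k\) divides both terms on the right. That completes the induction step.

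Running the induction from \(k=j\) down to \(k=1\) gives the congruences for every \(1\leq k\leq j\), and integrality of \(r_1,\ldots,r_{j-1}\) follows along the way. (At \(k=1\) the statement reads \(r_1\equiv\varepsilon n\pmod{q_1}\).) The only point needing care, more a matter of bookkeeping than a real obstacle, is that the modulus must be divided by \(q_k\) together with the congruence. This is legitimate precisely because \(q_k\) divides the modulus \(M_k\), and that is why the lemma uses the nested moduli \(q_1\cdots q_k\).
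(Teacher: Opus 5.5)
Your proof is correct and follows essentially the paper's argument: a descending induction on \(k\) in which the recurrence, in the form \(1-\varepsilon A_{\varepsilon,k}=q_kA_{\varepsilon,k-1}\), gives \(n-\varepsilon r_k\equiv q_kA_{\varepsilon,k-1}n\pmod{q_1\cdots q_k}\). This congruence yields both the integrality of \(r_{k-1}\) and, after dividing by \(q_k\), the congruence modulo \(q_1\cdots q_{k-1}\). The only cosmetic differences are that the paper obtains integrality directly from \(A_{\varepsilon,k}\equiv\varepsilon\pmod{q_k}\) and leaves the division step implicit, whereas you spell it out with the explicit integer \(c\).
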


\begin{proof}
We argue by descending induction on \(k\). The asserted congruence for
\(k=j\) is the hypothesis. Suppose that \(2\leq k\leq j\) and that
\[
r_k\equiv A_{\varepsilon,k}n
\pmod{q_1q_2\cdots q_k}.
\]
Since
\[
A_{\varepsilon,k}\equiv\varepsilon\pmod{q_k},
\]
we have
\[
n-\varepsilon r_k\equiv0\pmod{q_k},
\]
so \(r_{k-1}\) is an integer. Moreover,
\[
1-\varepsilon A_{\varepsilon,k}
=
q_kA_{\varepsilon,k-1},
\]
and therefore
\[
n-\varepsilon r_k
\equiv
q_kA_{\varepsilon,k-1}n
\pmod{q_1q_2\cdots q_k}.
\]
Dividing by \(q_k\), we obtain
\[
r_{k-1}\equiv A_{\varepsilon,k-1}n
\pmod{q_1q_2\cdots q_{k-1}}.
\]
\end{proof}



\begin{lemma}\label{lem:congruence-criterion pierce}
Fix \(j\geq 1\) and \(t\geq j+1\). If $n\geq t(t+2)$, then for all integers
\(r\in I^{\circ}_t(n)\), we have $r\in S_j(n)$ if and only if there is a tuple \(d=(p;q_2,\ldots,q_j)\in \mathcal{D}_P(j,t)\) such that
\[
r\equiv A_P(d)n \pmod {m(d)}.
\]
Furthermore, for $r\in I_t(n)$, the above equivalence can fail only at the single endpoint $r = n/(t+1)$.

\end{lemma}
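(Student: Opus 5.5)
The plan is to prove the two directions separately. Both rest on the dictionary between trajectories in $S_0(n)\to S_1(n)\to\cdots\to S_j(n)$ and the recursion of Lemma~\ref{lem:backward-congruence-propagation} with $\varepsilon=1$ and $q_1:=p$. With these choices $A_{1,1}=1$ and $A_{1,j}=A_P(d)$ by the recurrence for $A(q_2,\ldots,q_k;1)$, and $q_1q_2\cdots q_j=m(d)$.

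\emph{Forward direction.} Let $r=r_j\in S_j(n)\cap I^\circ_t(n)$ and choose a chain $r_1\in S_1(n),\ldots,r_j=r$ with $r_k=n\bmod r_{k-1}$.
\begin{enumerate}[label=(\roman*)]
\item Put $q_k:=\lfloor n/r_{k-1}\rfloor$, so that $n=q_kr_{k-1}+r_k$ with $0<r_k<r_{k-1}$.
\item From $n/r_k=q_kr_{k-1}/r_k+1>q_k+1$ we get $\lfloor n/r_k\rfloor\geq q_k+1$. This gives $q_2<q_3<\cdots<q_j$.
\item Applying the same inequality at the last step, together with $\lfloor n/r\rfloor=t+1$ for $r\in I^\circ_t(n)$, gives $q_j\leq t$.
\item Lemma~\ref{lemma: S1 characterization} applied to $r_1$ gives a prime $p$ with $p\mid n-r_1$ and $p(r_1+1)\leq n-r_1$. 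The latter yields $n/r_1>p+1$, hence $p<q_2$. So $d=(p;q_2,\ldots,q_j)\in\mathcal D_P(j,t)$.
\item For the congruence, run a forward induction. We have $r_1\equiv n\pmod p$, and $r_k=n-q_kr_{k-1}\equiv n-q_kA_{1,k-1}n=A_{1,k}n\pmod{pq_2\cdots q_k}$. This gives $r\equiv A_P(d)n\pmod{m(d)}$.
\end{enumerate}

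\emph{Converse.} Given $d$ with $r\equiv A_P(d)n\pmod{m(d)}$, set $r_j:=r$ and define $r_{k-1}:=(n-r_k)/q_k$. Lemma~\ref{lem:backward-congruence-propagation} shows these are integers with $r_1\equiv n\pmod p$. It remains to check three things.

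First, $r_k<r_{k-1}$, which is equivalent to $(q_k+1)r_k<n$. I will show by descending induction that $r_k<n/(q_{k+1}+1)$, with the convention $q_{j+1}:=t+1$ so that the case $k=j$ is $r<n/(t+1)$, i.e.\ $r\in I^\circ_t$. For the inductive step, $r_{k-1}<n/q_k\leq n/(q_{k-1}+1)$, using strictness of the $q$'s and $q_2\geq p+1$ at the bottom. Given $r_k<r_{k-1}$, it follows that $n=q_kr_{k-1}+r_k$ with $0\leq r_k<r_{k-1}$, so $r_k=n\bmod r_{k-1}$.

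Second, $r_1\in S_1(n)$ via Lemma~\ref{lemma: S1 characterization}. This requires $p\mid n-r_1$, which we already have, and $p(r_1+1)\leq n-r_1$, i.e.\ $(p+1)r_1\leq n-p$.
\begin{itemize}
\item For $j\geq2$: $r_1=(n-r_2)/q_2\leq (n-r_2)/(p+1)$, so it suffices that $r_2\geq p$. This holds because every $r_k\geq r_j>n/(t+2)\geq t\geq p$ once $n\geq t(t+2)$. This is exactly where the hypothesis on $n$ enters.
\item For $j=1$, $r_1=r$ and $p\leq t$. Write $n-r=pk$. Then $n-(p+1)r=p(k-r)$ is a positive multiple of $p$ (positive because $r<n/(t+1)\leq n/(p+1)$), hence at least $p$.
\end{itemize}

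Third, membership of $r$ itself: $r_j=r\in I^\circ_t(n)$ by assumption, and the chain constructed above shows $r\in S_j(n)$.

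\emph{Endpoint and main obstacle.} The only place the open interval was used is the strict inequality $r<n/(t+1)$. This enters through $\lfloor n/r\rfloor=t+1$ in the forward bound $q_j\leq t$, and through $(q_j+1)r_j<n$ in the converse. So for $r\in I_t(n)$ the equivalence can fail only at $r=n/(t+1)$. I expect the main obstacle to be this bookkeeping of strict versus non-strict inequalities: ensuring that the backward-constructed $r_k$ stay strictly decreasing, and that $r_1$ meets the $S_1$ size condition. The plan is to handle this with the single inductive bound $r_k<n/(q_{k+1}+1)$ combined with the lower bound $r_k>n/(t+2)\geq t$.
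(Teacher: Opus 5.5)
Your proposal is correct and follows essentially the same route as the paper: the same forward bounds $q_2<\cdots<q_j\le t$ and $p<q_2$, the same backward construction via Lemma~\ref{lem:backward-congruence-propagation}, and the same use of $n\ge t(t+2)$ to get $r_2\ge r>t>p$. Your forward induction on the congruence is a tidier replacement for the paper's explicit substitution formula for $r_1$. Two small slips need fixing. First, with the convention $q_{j+1}:=t+1$, the inductive claim should read $r_k<n/q_{k+1}$ rather than $r_k<n/(q_{k+1}+1)$; as written, the base case would say $r<n/(t+2)$. Second, $\lfloor n/r\rfloor=t+1$ also holds at $r=n/(t+1)$, so the forward direction survives at the endpoint, and only the converse can fail there (and only when $q_j=t$).
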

\begin{proof}
When \(j=1\), we have
\[
\mathcal D_P(1,t)=\{p\leq t:p\ \text{prime}\},
\qquad A_P(p)=1,\qquad m(p)=p.
\]
Thus the required congruence is \(r\equiv n\pmod p\). By
Lemma~\ref{lemma: S1 characterization}, membership \(r\in S_1(n)\)
implies \((p+1)r<n<(t+2)r\), and hence \(p\leq t\). Conversely, if
\(p\leq t\) and \(r\equiv n\pmod p\), then, for
\(r\in I_t^\circ(n)\), the positive integer
\[
n-(p+1)r
\]
is divisible by \(p\), and hence is at least \(p\). Therefore
\(p(r+1)\leq n-r\), so \(r\in S_1(n)\). The same argument works on
\(I_t(n)\), except possibly when \((t+1)r=n\). Hence the result holds
for \(j=1\).

Assume henceforth that \(j\geq2\).
Suppose first that \(r\in S_j(n)\). By the definition of \(S_j(n)\), there are integers $q_2,\ldots,q_j$ and integers \(r_1,\ldots,r_{j-1}\) with \(r_j=r\) and $r_1\in S_1(n)$ such that
\[
n=q_k r_{k-1}+r_k,
\qquad
0\leq r_k<r_{k-1},
\qquad
2\leq k\leq j,
\]
where \(r_j=r\). By the characterization of \(S_1(n)\) in Lemma~\ref{lemma: S1 characterization}, there is a prime
\(p\) such that
\begin{align*}
    p\mid n-r_1,
\qquad
p(r_1+1)\leq n-r_1.
\end{align*}

Repeated substitution in
\[
n=q_k r_{k-1}+r_k,
\qquad
2\leq k\leq j,
\]
gives
\[
r_1
=
\frac{
n\sum_{i=2}^{j}(-1)^iQ_{i+1}
+
(-1)^{j-1}r
}{Q_2}.
\]
Since \(p\mid n-r_1\), this identity gives
\[
r\equiv A_P(d)n\pmod{pQ_2},
\]
where \(d=(p;q_2,\ldots,q_j)\). This is precisely
\[
r\equiv A_P(d)n\pmod{m(d)}.
\]



It remains to determine the possible values of
\(p,q_2,\ldots,q_j\). For \(2\leq k<j\), the relations
\[
n=q_kr_{k-1}+r_k>(q_k+1)r_k,
\qquad
q_{k+1}=\left\lfloor\frac{n}{r_k}\right\rfloor,
\]
show that \(q_{k+1}>q_k\). Moreover, since $r\in I_t^{\circ}(n)$,
\[
(q_j+1)r<q_jr_{j-1}+r = n<(t+2)r,
\]
so \(q_j\leq t\). Finally,
\[
n-r_1=(q_2-1)r_1+r_2.
\]
If \(p\geq q_2\), then
\[
p(r_1+1)\geq q_2(r_1+1)>n-r_1,
\]
contrary to Lemma~\ref{lemma: S1 characterization}. Hence
\[
p<q_2<\cdots<q_j\leq t,
\]
and therefore \(d\in\mathcal D_P(j,t)\).

Conversely, suppose that there is a tuple
\[
d=(p;q_2,\ldots,q_j)\in\mathcal D_P(j,t)
\]
such that
\[
r\equiv A_P(d)n\pmod{m(d)}.
\]
Define \(q_1:=p\) and \(r_j:=r\). Recursively set
\[
r_{k-1}:=\frac{n-r_k}{q_k},
\qquad
1\leq k\leq j.
\]


It follows from Lemma~\ref{lem:backward-congruence-propagation}, applied with
\(\varepsilon=1\), that all of \( r_1,\ldots,r_{j-1}\) are integers and
$r_1\equiv n\pmod{q_1}.$
Hence $r_0$
is also an integer.


We next verify the inequalities between successive remainders. Since
\(r\in I_t^\circ(n)\) and \(q_j\leq t\), we have
\[
n>(t+1)r\geq(q_j+1)r.
\]
Therefore
\[
r_{j-1}-r_j
=
\frac{n-(q_j+1)r_j}{q_j}
>0.
\]
Thus \(r_{j-1}>r_j>0\).

Now suppose that \(3\leq k\leq j\) and
\[
r_{k-1}>r_k>0.
\]
Since \(q_{k-1}<q_k\), we have
\[
n=q_kr_{k-1}+r_k
>
(q_{k-1}+1)r_{k-1}.
\]
Consequently,
\[
r_{k-2}-r_{k-1}
=
\frac{n-(q_{k-1}+1)r_{k-1}}{q_{k-1}}
>0.
\]
Iterating backwards gives
\[
0<r_j<r_{j-1}<\cdots<r_1.
\]

It remains to verify the inequality required for \(r_1\in S_1(n)\). Since
\[
n-r_1=(q_2-1)r_1+r_2
\]
and \(p<q_2\),
\[
\begin{aligned}
n-r_1-p(r_1+1)
&=(q_2-1-p)r_1+r_2-p\\
&\geq r_2-p.
\end{aligned}
\]
Now \(n\geq t(t+2)\) and \(n<(t+2)r\), so
\[
r>\frac{n}{t+2}\geq t.
\]
Since \(r\) is an integer,
\[
r\geq t+1.
\]
Also \(r_2\geq r\), while \(p<q_2\leq t\), and hence
\[
r_2-p\geq r-p>0.
\]
Therefore
\[
p(r_1+1)\leq n-r_1.
\]
The characterization of \(S_1(n)\) in Lemma~\ref{lemma: S1 characterization} now gives \(r_1\in S_1(n)\), and the
recursive relations imply
\[
r=r_j\in S_j(n).
\]

The proof remains valid for \(r\in I_t(n)\) unless
\((t+1)r=n\). At this endpoint, failure is possible only when
\(q_j=t\), in which case
\[
r_{j-1}=\frac{n-r}{t}=r.
\]

\end{proof}

We state a similar lemma for the Engel-type iterated remainder sets \(T_j(n)\).
\begin{lemma}\label{lem:congruence-criterion engel}
Fix \(j\geq 1\) and \(t\geq 2\). Let $n\geq 1$. For all integers
\(r\in J^{\circ}_t(n)\), we have $r\in T_j(n)$ if and only if there is a tuple \(d=(p;q_2,\ldots,q_j)\in \mathcal{D}_E(j,t)\) such that
\[
r\equiv A_E(d)n \pmod {m(d)}.
\]
Furthermore, for $r\in J_t(n)$, the above equivalence can fail only at the single endpoint $r = n/(t-1)$.
\end{lemma}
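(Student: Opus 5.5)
The plan is to mirror the proof of Lemma~\ref{lem:congruence-criterion pierce}, replacing Lemma~\ref{lemma: S1 characterization} by Lemma~\ref{lemma: T1 characterization} and applying Lemma~\ref{lem:backward-congruence-propagation} with \(\varepsilon=-1\). The trajectories are now governed by
\[
q_k r_{k-1}=n+r_k,\qquad 0\le r_k<r_{k-1},
\]
that is, \(r_{k-1}=(n+r_k)/q_k\). This is exactly the recursion of Lemma~\ref{lem:backward-congruence-propagation} with \(\varepsilon=-1\). One checks from the recurrence that \(A_{-1,k}\) agrees with \(A(q_2,\ldots,q_k;-1)\). Throughout, \(r\in J_t^\circ(n)\) means \((t-1)r<n<tr\).

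\textbf{Case \(j=1\).} Here \(A_E=-1\) and \(m=p\), so the claim is that \(r\in T_1(n)\) if and only if \(p\mid n+r\) for some prime \(p\le t\).
\begin{itemize}
\item If \(r\in T_1(n)\), Lemma~\ref{lemma: T1 characterization} gives a prime \(p\mid n+r\) with \(p(r+1)\le n+r<(t+1)r\), so \(p\le t\).
\item Conversely, suppose \(p\le t\) and \(p\mid n+r\). Then \(n+r-pr\) is divisible by \(p\). It is also positive, since \(n>(t-1)r\ge(p-1)r\). Hence \(n+r-pr\ge p\), i.e. \(p(r+1)\le n+r\), and Lemma~\ref{lemma: T1 characterization} gives \(r\in T_1(n)\).
\end{itemize}

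\textbf{Forward direction for \(j\ge 2\).} Suppose \(r\in T_j(n)\). Take a chain \(r_1\in T_1(n)\), \(r_2,\ldots,r_j=r\), with \(q_k=\lceil n/r_{k-1}\rceil\) and all \(r_k>0\). Lemma~\ref{lemma: T1 characterization} supplies a prime \(p\) with \(p\mid n+r_1\). Unwinding the relations, exactly as in the Pierce case, gives \(r\equiv A_E(d)n \pmod{pQ_2}\) with \(d=(p;q_2,\ldots,q_j)\). It remains to check that \(d\in\mathcal D_E(j,t)\).
\begin{itemize}
\item Monotonicity: \(q_{k+1}\ge q_k\) because \(r_k<r_{k-1}\) and \(q\) is a ceiling of \(n\) divided by these.
\item Upper bound: \(q_jr<q_jr_{j-1}=n+r<(t+1)r\), so \(q_j\le t\).
\item First step: \(pr_1<p(r_1+1)\le n+r_1\) gives \((p-1)r_1<n\le q_2r_1\), so \(p\le q_2\).
\end{itemize}

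\textbf{Converse for \(j\ge 2\).} Start from \(r\equiv A_E(d)n\) and define \(r_{k-1}=(n+r_k)/q_k\). Lemma~\ref{lem:backward-congruence-propagation} makes all the \(r_k\) integers and gives \(r_1\equiv -n\pmod p\). Next establish the strict descent \(r_{k-1}>r_k\) by backward induction.
\begin{itemize}
\item Base step: \(r_{j-1}-r=(n-(q_j-1)r)/q_j>0\), since \(n>(t-1)r\ge(q_j-1)r\).
\item Inductive step: from \(n=q_kr_{k-1}-r_k>(q_k-1)r_{k-1}\ge(q_{k-1}-1)r_{k-1}\) we get \(r_{k-2}>r_{k-1}\).
\end{itemize}
With \(0\le r_k<r_{k-1}\), the identity \(q_kr_{k-1}=n+r_k\) forces \(q_k=\lceil n/r_{k-1}\rceil\) and \(r_k=(-n)\bmod r_{k-1}\). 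For \(r_1\in T_1(n)\): we have \(n>(q_2-1)r_1\ge(p-1)r_1\), so \(n+r_1-pr_1\) is a positive multiple of \(p\). This yields \(p(r_1+1)\le n+r_1\), and Lemma~\ref{lemma: T1 characterization} applies.

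No lower bound on \(n\) is needed, unlike the Pierce case. There the extra room came from \(r_2-p\), whereas here divisibility alone supplies the slack.

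\textbf{Endpoint and main obstacle.} On \(J_t(n)\), the only place a strict inequality was used at the right end is \(n>(t-1)r\). So the argument can fail only when \(n=(t-1)r\); then \(q_j=t\) gives \(r_{j-1}=r\), violating strict descent. The main care point is the bookkeeping: matching \(A_{-1,k}\) with \(A(q_2,\ldots,q_k;-1)\) and the sign pattern in the unwound formula. The weak inequalities \(p\le q_2\le\cdots\le q_j\) also need tracking, since equality is now allowed, and one must confirm that each inequality step still goes through with non-strict ordering.
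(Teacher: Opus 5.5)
Your proposal is correct and follows the paper's proof essentially step by step. It uses the same $j=1$ argument via Lemma~\ref{lemma: T1 characterization}, the same forward unwinding with the bounds $p\le q_2\le\cdots\le q_j\le t$, the same backward construction via Lemma~\ref{lem:backward-congruence-propagation} with $\varepsilon=-1$, and the same endpoint analysis. The only cosmetic difference is in the converse: you certify $r_1\in T_1(n)$ by checking the criterion of Lemma~\ref{lemma: T1 characterization}, as the paper does in its Pierce proof. The paper's Engel proof instead extends the chain to $r_0=(n+r_1)/p$ and verifies $1\le r_0<n$ directly, which amounts to the same thing.
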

\begin{proof}
The proof is similar to that of Lemma~\ref{lem:congruence-criterion pierce}. We reproduce the argument for completeness.

When \(j=1\), we have
\[
\mathcal D_E(1,t)=\{p\leq t:p\ \text{prime}\},
\qquad A_E(p)=-1,\qquad m(p)=p.
\]
Thus the required congruence is \(r\equiv (-n)\pmod p\). By
Lemma~\ref{lemma: T1 characterization}, membership \(r\in T_1(n)\)
implies \((p-1)r<n<tr\), and hence \(p\leq t\). Conversely, if
\(p\leq t\) and \(r\equiv (-n)\pmod p\), then, for
\(r\in J_t^\circ(n)\), the positive integer
\[
n-(p-1)r
\]
is divisible by \(p\), and hence is at least \(p\). Therefore
\(p(r+1)\leq n+r\), so \(r\in T_1(n)\). The same argument works on
\(J_t(n)\), except possibly when \((t-1)r=n\). Hence the result holds
for \(j=1\).

Assume henceforth that \(j\geq2\).
Suppose first that \(r\in T_j(n)\). By the definition of \(T_j(n)\), there are integers $q_2,\ldots,q_j$ and integers \(r_1,\ldots,r_{j-1}\) with \(r_j=r\) and $r_1\in T_1(n)$ such that
\[
n=q_k r_{k-1}-r_k,
\qquad
0\leq r_k<r_{k-1},
\qquad
2\leq k\leq j,
\]
where \(r_j=r\). By the characterization of \(T_1(n)\) in Lemma~\ref{lemma: T1 characterization}, there is a prime
\(p\) such that
\begin{align*}
    p\mid n+r_1,
\qquad
p(r_1+1)\leq n+r_1.
\end{align*}

Repeated substitution in
\[
n=q_k r_{k-1}-r_k,
\qquad
2\leq k\leq j,
\]
gives
\[
r_1
=
\frac{
n\sum_{i=3}^{j+1}Q_i+r
}{Q_2}.
\]
Consequently,
\[
Q_2(n+r_1)
=
n\sum_{i=2}^{j+1}Q_i+r.
\]
Since \(p\mid n+r_1\), this identity gives
\[
r\equiv A_E(d)n\pmod{pQ_2},
\]
where \(d=(p;q_2,\ldots,q_j)\) and
\[
A_E(d)=-\sum_{i=2}^{j+1}Q_i.
\]
This is precisely
\[
r\equiv A_E(d)n\pmod{m(d)}.
\]



It remains to determine the possible values of
\(p,q_2,\ldots,q_j\). Since
\[
q_k=\left\lceil\frac{n}{r_{k-1}}\right\rceil
\quad\text{and}\quad
r_k<r_{k-1},
\]
we have
\[
q_2\leq q_3\leq\cdots\leq q_j.
\]
Furthermore, as $r\in J_t^{\circ}(n)$,
\[
(q_j-1)r<q_jr_{j-1}-r=n<tr,
\]
so \(q_j\leq t\). Finally,
\[
n+r_1=(q_2+1)r_1-r_2.
\]
If \(p\geq q_2+1\), then
\[
p(r_1+1)
\geq
(q_2+1)(r_1+1)
>
n+r_1,
\]
contrary to Lemma~\ref{lemma: T1 characterization}. Thus
\[
p\leq q_2\leq\cdots\leq q_j\leq t,
\]
and hence
\[
d=(p;q_2,\ldots,q_j)\in\mathcal D_E(j,t).
\]

Conversely, suppose that there is a tuple
\[
d=(p;q_2,\ldots,q_j)\in\mathcal D_E(j,t)
\]
such that
\[
r\equiv A_E(d)n\pmod{m(d)}.
\]
Define
\[
q_1:=p,
\qquad
r_j:=r,
\]
and recursively set
\[
r_{k-1}:=\frac{n+r_k}{q_k},
\qquad
1\leq k\leq j.
\]

It follows from Lemma~\ref{lem:backward-congruence-propagation}, applied with
\(\varepsilon=-1\), that all of \( r_1,\ldots,r_{j-1}\) are integers and
$r_1\equiv -n\pmod{q_1}.$
Hence $r_0$
is also an integer.



We next verify the inequalities between successive remainders. Since
\(r\in J_t^\circ(n)\) and \(q_j\leq t\), we have
\[
n>(t-1)r\geq(q_j-1)r.
\]
Therefore
\[
r_{j-1}-r_j
=
\frac{n-(q_j-1)r_j}{q_j}
>0.
\]
Thus \(r_{j-1}>r_j>0\).

Now suppose that \(2\leq k\leq j\) and
\[
r_{k-1}>r_k>0.
\]
Since \(q_{k-1}\leq q_k\), we have
\[
n=q_kr_{k-1}-r_k
>
(q_k-1)r_{k-1}
\geq
(q_{k-1}-1)r_{k-1}.
\]
Consequently,
\[
r_{k-2}-r_{k-1}
=
\frac{n-(q_{k-1}-1)r_{k-1}}{q_{k-1}}
>0.
\]
For \(k=2\), this argument uses
\[
q_1=p\leq q_2.
\]
Iterating backwards gives
\[
0<r_j<r_{j-1}<\cdots<r_1<r_0.
\]

Furthermore,
\[
n=pr_0-r_1.
\]
Since \(p\geq2\) and \(r_1<r_0\),
\[
n-r_0
=
(p-1)r_0-r_1
\geq
r_0-r_1
>0.
\]
Thus
\[
1\leq r_0<n,
\]
so \(r_0\in T_0(n)\). The recursive relations
\[
n=q_kr_{k-1}-r_k,
\qquad
0\leq r_k<r_{k-1},
\qquad
1\leq k\leq j,
\]
show that
\[
r_k=(-n)\bmod r_{k-1}
\]
for every \(1\leq k\leq j\). Therefore
\[
r=r_j\in T_j(n).
\]

The proof remains valid for \(r\in J_t(n)\) unless
\((t-1)r=n\). At this endpoint, failure is possible only when
\(q_j=t\), in which case
\[
r_{j-1}=\frac{n+r}{t}=r.
\]

\end{proof}

Let $\sigma\in\{E, P\}$. For fixed \(j,t\), define
\[
M_{\sigma}(j,t):=\operatorname{lcm}\{m(d):d\in \mathcal{D}_\sigma(j,t)\}.
\]
If \(\mathcal{D}_\sigma(j,t)=\varnothing\), set \(M_\sigma(j,t)=1\). For \(a\in \mathbb Z/M_\sigma(j,t)\mathbb Z\), define
\[
U_{j,t}^{(\sigma)}(a):=
\bigcup_{d\in \mathcal{D}_\sigma(j,t)}
\left\{
x\in \mathbb Z/M_\sigma(j,t)\mathbb Z:
x\equiv A_\sigma(d)a \pmod {m(d)}
\right\},
\]
and put
\[
D_{j,t}^{(\sigma)}(a):=\frac{|U_{j,t}^{(\sigma)}(a)|}{M_\sigma(j,t)}.
\]

\begin{lemma}\label{lem:interval-count pierce}
For fixed \(j\geq 1\) and \(t\geq j+1\), uniformly as \(n\to\infty\),
\[
\frac{|S_j(n)\cap I_t(n)|}{n}
=
\frac{D_{j,t}^{(P)}(n\bmod M_P(j,t))}{(t+1)(t+2)}
+O_{j,t}\left(\frac1n\right).
\]
\end{lemma}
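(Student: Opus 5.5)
We derive the count directly from Lemma~\ref{lem:congruence-criterion pierce}; the rest is a periodicity count over $I_t(n)$. Fix $j\geq1$, $t\geq j+1$, and write $M:=M_P(j,t)$. Take $n\geq t(t+2)$, so the lemma applies. For every $r\in I_t(n)$, except possibly the single endpoint $r=n/(t+1)$, we then have
\[
r\in S_j(n)\iff r\equiv A_P(d)n\pmod{m(d)}\ \text{for some } d\in\mathcal D_P(j,t).
\]
Since each $m(d)$ divides $M$, the right-hand condition depends only on $r\bmod M$ and on $n\bmod M$. Reducing modulo $M$, it says exactly that
\[
r\bmod M\in U_{j,t}^{(P)}(n\bmod M).
\]
Consequently
\[
|S_j(n)\cap I_t(n)| = \#\{r\in I_t(n): r\bmod M\in U\}+O(1), \qquad U:=U_{j,t}^{(P)}(n\bmod M),
\]
where the $O(1)$ (at most $1$) absorbs the exceptional endpoint. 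When $\mathcal D_P(j,t)=\varnothing$, the union $U$ is empty and $M=1$. The statement then just says that $S_j(n)\cap I_t(n)$ has at most one element, which the lemma already gives, so this case is consistent.

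Next we count the integers of a given residue class in an interval. The set $I_t(n)$ is the set of integers in a half-open interval of length
\[
L=\frac{n}{t+1}-\frac{n}{t+2}=\frac{n}{(t+1)(t+2)}.
\]
For a fixed class $x\in\Z/M\Z$, the number of $r$ in such an interval with $r\equiv x\pmod M$ is $L/M+O(1)$, with an absolute $O(1)$. Summing over the $|U|$ classes in $U$ gives
\[
\frac{|U|}{M}\cdot\frac{n}{(t+1)(t+2)}+O(|U|).
\]
Since $|U|\leq M$ and $M$ depends only on $j$ and $t$, this error is $O_{j,t}(1)$. Dividing by $n$ yields the claimed formula with error $O_{j,t}(1/n)$. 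The finitely many $n<t(t+2)$ are harmless, since for them both sides are $O_{j,t}(1/n)$ trivially: the left-hand side is at most $|I_t(n)|/n\leq 1/n+1/((t+1)(t+2))$, and that constant is itself $O_{j,t}(1/n)$ when $n$ is bounded. So the estimate is uniform in $n$.

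No step here looks like a real obstacle. The one point to check carefully is that the congruence condition really is a union of full residue classes modulo $M$, which holds because every $m(d)$ divides $M$ by the definition of $M$. It also matters that the endpoint exception in Lemma~\ref{lem:congruence-criterion pierce} affects only one integer, so it contributes $O(1)$ rather than anything growing with $n$.
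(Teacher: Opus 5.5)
Your proof is correct and follows the paper's argument. You use Lemma~\ref{lem:congruence-criterion pierce} to turn membership into a union of residue classes modulo $M_P(j,t)$, up to the single endpoint, and then count those classes in an interval of length $n/((t+1)(t+2))$. Your version is more detailed than the paper's one-line periodicity remark: you give the explicit $L/M+O(1)$ count per class and handle the $n\ge t(t+2)$ restriction and small $n$. Your remark on an empty $\mathcal D_P(j,t)$ is vacuous, since that set is nonempty whenever $t\ge j+1$, but it does no harm.
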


\begin{proof}
By Lemma~\ref{lem:congruence-criterion pierce}, up to \(O_{j,t}(1)\) endpoint exceptions, the
integers \(r\in S_j(n)\cap I_t(n)\) are exactly those lying in the union of residue classes
\[
r\equiv A_P(d)n \pmod {m(d)},\qquad d\in \mathcal{D}_P(j,t).
\]
This union is periodic modulo \(M_P(j,t)\). Hence
\[
\left|S_j(n)\cap I_t(n)\right|
=
D_{j,t}^{(P)}(n\bmod M_P(j,t))\,\frac{n}{(t+1)(t+2)}
+O_{j,t}(1).
\]
Dividing by \(n\) gives the claimed estimate.
\end{proof}

\begin{lemma}\label{lem:interval-count engel}
For fixed \(j\geq 1\) and \(t\geq 2\), uniformly as \(n\to\infty\),
\[
\frac{|T_j(n)\cap J_t(n)|}{n}
=
\frac{D_{j,t}^{(E)}(n\bmod M_E(j,t))}{t(t-1)}
+O_{j,t}\left(\frac1n\right).
\]
\end{lemma}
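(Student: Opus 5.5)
This is the Engel analogue of Lemma~\ref{lem:interval-count pierce}, and I would prove it the same way, with Lemma~\ref{lem:congruence-criterion engel} in place of Lemma~\ref{lem:congruence-criterion pierce}. One simplification is that Lemma~\ref{lem:congruence-criterion engel} holds for every \(n\geq1\), so no lower bound on \(n\) is needed.

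\emph{Step 1: reduce to a residue-class union.} Fix \(j\geq1\) and \(t\geq2\). By Lemma~\ref{lem:congruence-criterion engel}, for every integer \(r\in J_t(n)\) except possibly the endpoint \(r=n/(t-1)\), we have \(r\in T_j(n)\) if and only if \(r\equiv A_E(d)n\pmod{m(d)}\) for some \(d\in\mathcal D_E(j,t)\). The set \(\mathcal D_E(j,t)\) is finite and depends only on \(j,t\), and each \(m(d)\) divides \(M_E(j,t)\). So membership of \(r\) in this union depends only on \(r\bmod M_E(j,t)\). Concretely, it is the condition
\[
r\bmod M_E(j,t)\in U^{(E)}_{j,t}\bigl(n\bmod M_E(j,t)\bigr).
\]
To justify this, note that \(A_E(d)n\bmod m(d)\) depends only on \(n\bmod M_E(j,t)\), since \(m(d)\mid M_E(j,t)\).

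\emph{Step 2: count in the interval.} Write \(M=M_E(j,t)\) and \(a=n\bmod M\). The interval \(J_t(n)=(n/t,n/(t-1)]\) has length \(n/(t(t-1))\). Split it into \(\lfloor n/(t(t-1)M)\rfloor\) complete blocks of \(M\) consecutive integers, plus a leftover piece of fewer than \(M\) integers. Each complete block contains exactly \(|U^{(E)}_{j,t}(a)|=D^{(E)}_{j,t}(a)M\) elements of the union. The leftover piece and the single possible endpoint exception together contribute \(O(M)=O_{j,t}(1)\). This gives
\[
|T_j(n)\cap J_t(n)|=D^{(E)}_{j,t}(a)\,\frac{n}{t(t-1)}+O_{j,t}(1).
\]
Dividing by \(n\) yields the claim.

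There is no real obstacle here. The only points needing care are that the error term is uniform in \(n\) (it depends only on \(M_E(j,t)\), which is fixed) and that Step 1 correctly identifies the periodicity in both \(r\) and \(n\).
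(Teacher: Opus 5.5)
Your proposal is correct and follows the same route as the paper. The paper transfers the Pierce argument: Lemma~\ref{lem:congruence-criterion engel} identifies \(T_j(n)\cap J_t(n)\), up to the single endpoint \(n/(t-1)\), with a union of residue classes periodic modulo \(M_E(j,t)\), and counting over an interval of length \(n/(t(t-1))\) costs only \(O_{j,t}(1)\). Your block count just spells out what the paper leaves implicit; the possible off-by-one in the number of complete blocks is absorbed in the \(O(M_E(j,t))\) error.
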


\begin{proof}
The proof is similar to that of Lemma~\ref{lem:interval-count pierce}, relying on Lemma~\ref{lem:congruence-criterion engel} instead.
\end{proof}

\section{The Cyclic Dilation Lemma}\label{sec: The Cyclic Dilation Lemma}
To make further progress, we need the following elementary lemma.
\begin{lemma}\label{lemma: cyclic dilation}
Let $M\geq 1$ and $\mathcal{I}$ be a finite index set. Suppose that $m_i\mid M$ are positive divisors and $b_i\in\mathbb Z$ for $i\in \mathcal{I}$. For
\[
U(a):=\bigcup_{i\in\mathcal{I}}\{x\in\mathbb Z/M\mathbb Z:x\equiv ab_i\pmod {m_i}\},
\]
we have
\[
|U(0)|\leq |U(a)|\leq |U(u)|
\]
for every $a\in\mathbb Z/M\mathbb Z$ and every unit
$u\in(\mathbb Z/M\mathbb Z)^\times$. Moreover, all units give the same
cardinality.
\end{lemma}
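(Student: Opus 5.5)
The plan is to treat the three assertions in the order: units first, then reduction to divisors, then the lower bound, and finally the upper bound, which I expect to be the real difficulty. Throughout I use the fibred picture. Put $G:=(\Z/M\Z)^2$ and define the subgroups
\[
H_i:=\{(x,y)\in G:\ x\equiv yb_i \pmod{m_i}\}.
\]
Then $U(a)$ is exactly the fibre of $W:=\bigcup_{i\in\mathcal I}H_i$ over $y=a$ under the projection $\pi(x,y)=y$. Note also that, for each $i$, the set $\{x: x\equiv ab_i \pmod{m_i}\}$ is the coset $ab_i+m_i\Z/M\Z$ of the subgroup $m_i\Z/M\Z$.

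\emph{Units.} If $u\in(\Z/M\Z)^\times$, then $u$ is also a unit modulo every $m_i\mid M$. Hence $x\equiv b_i\pmod{m_i}$ holds if and only if $ux\equiv ub_i\pmod{m_i}$. Therefore the bijection $x\mapsto ux$ of $\Z/M\Z$ maps $U(1)$ onto $U(u)$, and $|U(u)|=|U(1)|$ for all units. The same argument shows that $|U(a)|=|U(av)|$ for every unit $v$. Since every $a\in\Z/M\Z$ can be written as $a=gv$ with $g=\gcd(a,M)$ and $v$ a unit (a standard lifting fact for $\Z/M\Z$), it suffices to treat $a=g$ with $g\mid M$.

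\emph{Lower bound.} $U(0)=\bigcup_i m_i\Z/M\Z$ is a union of subgroups, and $U(g)$ is a union of one coset of each of these subgroups. I will prove the coset-alignment statement: for subgroups $K_i$ of a finite abelian group and elements $c_i$, one has $|\bigcup_i(c_i+K_i)|\geq|\bigcup_i K_i|$. I intend to argue by induction on $|\mathcal I|$, as in \cite[Lemma 2.3]{Simpson1986ExactCoverings}, translating so that one coset becomes a subgroup. Alternatively, I would count the complement by inclusion--exclusion: each intersection $\bigcap_{i\in B}(c_i+K_i)$ is either empty or a coset of $\bigcap_{i\in B}K_i$. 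Applied with $K_i=m_i\Z/M\Z$ and $c_i=gb_i$, this gives $|U(0)|\leq|U(g)|$.

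\emph{Upper bound (main obstacle).} Here I must show $|U(g)|\leq|U(1)|$. The map $x\mapsto gx$ sends $U(1)$ into $U(g)$, which is the wrong direction and is not injective, so a more careful argument is needed. The plan is to decompose $\Z/M\Z$ into cosets $z+g\Z/M\Z$ and compare $U(1)$ and $U(g)$ coset by coset. On the coset through $0$, the condition $gy\equiv gb_i\pmod{m_i}$ becomes $y\equiv b_i\pmod{m_i/\gcd(m_i,g)}$. This is an instance of the same problem with modulus $M/g$ and dilation parameter $1$, so its size matches the corresponding slice of $U(1)$ after scaling. On a general coset, the slice of $U(g)$ is a union of cosets of the subgroups $m_i\Z\cap g\Z$. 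The slice of $U(1)$ on a suitable translate should contain a union of cosets of the same subgroups with at least as large a union. I would first try to make this precise using the inclusion--exclusion formula: the solvability indicator of $\{x\equiv ab_i \pmod{m_i}\}_{i\in B}$ is $[\gcd(m_i,m_k)\mid a(b_i-b_k)\ \forall i,k\in B]$, which is monotone in the divisibility order of $a$. Combined with the coset-alignment inequality applied inside each fibre, this should give the comparison. If that fails, I would fall back on the fibre formula
\[
|W\cap\pi^{-1}(y)|=\sum_{B:\ y\in\pi(H_B)}(-1)^{|B|+1}\,|H_B\cap\ker\pi|,
\qquad H_B:=\bigcap_{i\in B}H_i,
\]
which depends only on $\langle y\rangle$. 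I would then prove monotonicity in $\langle y\rangle$ by induction on the number of prime factors of $M/g$, passing from $g$ to $g/p$ one prime at a time.
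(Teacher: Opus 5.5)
\textbf{There is a genuine gap: neither inequality is proved.} Your reduction to $a=g$ with $g\mid M$ (via $x\mapsto ux$ and $a=gv$) is correct and matches the paper, but what follows does not close.

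\emph{Lower bound.} The coset-alignment statement you plan to prove is false in the generality you state. In $G=(\Z/2\Z)^2$, let $K_1,K_2,K_3$ be the three subgroups of order $2$ and choose $c_i\notin K_i$. Then $\bigcup_i(c_i+K_i)=G\setminus\{0\}$ has $3$ elements, while $\bigcup_i K_i=G$ has $4$. So an induction that only translates one coset onto a subgroup cannot work. Your inclusion--exclusion count cannot work either: it deletes terms of both signs, so it yields no inequality. Any correct argument must use cyclicity. In $\Z/M\Z$ the statement is true, but with $b_i:=c_i$ it is exactly $|U(0)|\le|U(1)|$. So it is not an easy auxiliary lemma; it carries the whole content of the lower bound.

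\emph{Upper bound.} You only list strategies (``should give'', ``if that fails''). Monotonicity of the CRT solvability indicators in $a$ does not pass through the alternating signs of inclusion--exclusion. Your final fallback names the right induction (one prime at a time) but supplies no inductive step.

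\textbf{The missing idea} is the single-prime inequality $|U(\ell c)|\le|U(c)|$ for every prime $\ell\mid M$ and every $c$. It gives both bounds at once: apply it repeatedly from $v$ to $gv$ for the upper bound, and from $gv$ to $Mv=0$ for the lower bound. The paper proves it as follows.
\begin{enumerate}
\item Write $M=\ell^KN$ with $\ell\nmid N$, and $m_i=\ell^{e_i}n_i$ with $n_i\mid N$.
\item Multiplication by $\ell$ is invertible modulo $N$. Hence the indices $i$ whose classes can meet $N$-component $r$ in $U(c)$ are exactly those that can meet $N$-component $\ell r$ in $U(\ell c)$. Call this common set $I$.
\item It therefore suffices to compare $\bigcup_{i\in I}R_i(c)$ with $\bigcup_{i\in I}R_i(\ell c)$ inside $\Z/\ell^K\Z$, where $R_i(c)=\{y:y\equiv cb_i \pmod{\ell^{e_i}}\}$.
\item Residue classes modulo powers of a single prime are nested or disjoint. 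So one can extract a pairwise disjoint subfamily $\{R_j(c)\}_{j\in J}$ with the same union.
\item Each containment $R_i(c)\subseteq R_j(c)$ persists after multiplying the shifts by $\ell$. Hence $\bigcup_{i\in I}R_i(\ell c)\subseteq\bigcup_{j\in J}R_j(\ell c)$.
\item The right-hand side has size at most $\sum_{j\in J}|R_j(\ell c)|=\sum_{j\in J}|R_j(c)|=\bigl|\bigcup_{i\in I}R_i(c)\bigr|$. Summing over $r$ gives the claim.
\end{enumerate}
This nested-or-disjoint structure at a single prime is exactly where cyclicity enters, and it is what your proposal lacks.
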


\begin{proof}
First we prove the following claim. If $\ell\mid M$ is prime, then
\[
|U(\ell c)|\leq |U(c)|
\]
for every $c\in\mathbb Z/M\mathbb Z$.

Write
\[
M=\ell^K N,
\qquad (\ell,N)=1,
\]
and
\[
m_i=\ell^{e_i}n_i,
\qquad n_i\mid N.
\]
By the Chinese remainder theorem, a residue modulo $M$ may be written as a
pair of residues modulo $\ell^K$ and modulo $N$.

Fix a residue $r$ modulo $N$. For $U(c)$, the congruence modulo $N$ selects
exactly those indices $i$ for which
\[
r\equiv cb_i\pmod {n_i}.
\]
Denote the set of such indices by $I$. For $U(\ell c)$, the corresponding residue modulo $N$ is $\ell r$, and the
condition becomes
\[
\ell r\equiv \ell cb_i\pmod {n_i}.
\]
Since multiplication by $\ell$ is invertible modulo $N$, these are the same
conditions. Thus it remains only to compare, modulo $\ell^K$, the unions of
the residue classes

\comO{Can we use $R_i$ for these residue classes, I think they are not related to the function A, right?}
\comS{Yes, I changed it}

\[
R_i(c)=\{y\in\mathbb Z/\ell^K\mathbb Z:y\equiv cb_i\pmod{\ell^{e_i}}\}
\]
and
\[
R_i(\ell c)=\{y\in\mathbb Z/\ell^K\mathbb Z:y\equiv \ell cb_i
\pmod{\ell^{e_i}}\}.
\]

For residue classes modulo powers of $\ell$, any two classes are either
disjoint, or one is contained in the other. Starting from the family \(\{R_i(c)\}_{i\in I}\), discard every class that is properly contained in another one, and retain only one representative whenever several of the remaining classes coincide. Let $J$ be the
set of remaining indices. Then the classes $R_j(c)$ with $j\in J$ are
pairwise disjoint, and their union is the same as the union of all the classes
$\{R_i(c)\}_{i\in I}$.

Now suppose that
\[
R_i(c)\subseteq R_j(c).
\]
Then $e_i\geq e_j$ and
\[
cb_i\equiv cb_j\pmod{\ell^{e_j}}.
\]
Multiplying by $\ell$ gives
\[
\ell cb_i\equiv \ell cb_j\pmod{\ell^{e_j}},
\]
and hence
\[
R_i(\ell c)\subseteq R_j(\ell c).
\]
Therefore, after replacing $c$ by $\ell c$, every deleted class is still
contained in one of the classes that remained.

Thus
\[
\bigcup_{i\in I} R_i(\ell c)\subseteq \bigcup_{j\in J} R_j(\ell c).
\]
The classes $R_j(\ell c)$ have the same sizes as the corresponding
$R_j(c)$. Hence
\[
\left|\bigcup_{i\in I} R_i(\ell c)\right|
\leq
\sum_{j\in J}|R_j(\ell c)|
=
\sum_{j\in J}|R_j(c)|
=
\left|\bigcup_{i\in I} R_i(c)\right|.
\]
Summing the above over the residues modulo $N$ gives
\[
|U(\ell c)|\leq |U(c)|.
\]

Now let $a\in\mathbb Z/M\mathbb Z$. Choose a divisor $q\mid M$ and a unit
$v\in(\mathbb Z/M\mathbb Z)^\times$ such that
\[
a\equiv qv\pmod M.
\]
This is obtained by putting into $q$ exactly the prime-power factors of $M$
that also divide $a$, up to the powers occurring in $M$.

Applying the claim one prime factor at a time gives
\[
|U(a)|=|U(qv)|\leq |U(v)|.
\]
Applying the same claim further, multiplying by the remaining prime factors
needed to get from $q$ to $M$, gives
\[
|U(0)|=|U(Mv)|\leq |U(qv)|=|U(a)|.
\]
Thus
\[
|U(0)|\leq |U(a)|\leq |U(v)|.
\]

Finally, if $u$ is a unit modulo $M$, multiplication by $u^{-1}$ is a
bijection from $U(u)$ to $U(1)$. Hence all units give the same cardinality.
Therefore $|U(v)|=|U(u)|$ for every unit $u$, and the lemma follows.
\end{proof}

For $T\geq j+1$, put
\[
L_P(j,T):=\lcm_{j+1\leq t\leq T}M_P(j,t).
\]
Additionally, for $T\geq 2$, put
\[
L_E(j, T):=\lcm_{2\leq t\leq T}M_E(j, t).
\]

For $a\in\Z/L_P(j,T)\Z$, define
\[
F_{j,T}^{(P)}(a):=
\sum_{t=j+1}^{T}
\frac{D_{j,t}^{(P)}(a\bmod M_P(j,t))}{(t+1)(t+2)}.
\]
Analogously, for $a\in \Z/L_E(j,T)\Z$, define
\[
F_{j,T}^{(E)}(a):=
\sum_{t=2}^{T}
\frac{D_{j,t}^{(E)}(a\bmod M_E(j,t))}{t(t-1)}.
\]
The tails satisfy the uniform bound
\[
\sum_{t>T}|S_j(n)\cap I_t(n)|\leq \frac{n}{T+2}+1, \quad \text{and}
\]
\[
\sum_{t>T}|T_j(n)\cap J_t(n)|\leq \frac{n}{T}+1
\]

We will now show that $S_j(n)\cap I_t(n)$ is empty for $t<j+1$. Indeed, we claim by induction on \(j\geq 1\) that every nonzero
\(r\in S_j(n)\) satisfies
\[
r<\frac{n}{j+2}.
\]
For \(j=1\), write \(r\equiv n\bmod k\) with \(k\in S_0(n)\), so
\(k\leq n/2\). If \(q=\lfloor n/k\rfloor\), then \(q\geq 2\) and
\(k>n/(q+1)\); hence
\[
r=n-qk<\frac{n}{q+1}\leq \frac{n}{3}.
\]
Now suppose the claim holds for \(j-1\). If \(r=n\bmod k\) with
\(k\in S_{j-1}(n)\setminus\{0\}\), then \(k<n/(j+1)\), so
\(q:=\lfloor n/k\rfloor\geq j+1\). Since \(k>n/(q+1)\),
\[
r=n-qk<\frac{n}{q+1}\leq \frac{n}{j+2}.
\]
Thus, if \(t<j+1\), then \(I_t(n)\subseteq [n/(j+1),n]\), and therefore
\(S_j(n)\cap I_t(n)=\varnothing\).

Hence, using the fact that, for a fixed $T$, there are infinitely many $n$ congruent to a given residue class modulo $L_P(j, T)$, we have
\[
\liminf_{n\to\infty}\frac{s_j(n)}{n}=
\lim_{T\to\infty}
\min_{a\in\Z/L_P(j,T)\Z}F_{j,T}^{(P)}(a),\quad\limsup_{n\to\infty}\frac{s_j(n)}{n}=
\lim_{T\to\infty}
\max_{a\in\Z/L_P(j,T)\Z}F_{j,T}^{(P)}(a),\quad\text{and} 
\]
\[
\liminf_{n\to\infty}\frac{t_j(n)}{n}=
\lim_{T\to\infty}
\min_{a\in\Z/L_E(j,T)\Z}F_{j,T}^{(E)}(a),\quad\limsup_{n\to\infty}\frac{t_j(n)}{n}=
\lim_{T\to\infty}
\max_{a\in\Z/L_E(j,T)\Z}F_{j,T}^{(E)}(a).
\]
\subsection{Proof of Propositions~\ref{propn: liminf expression pierce} and~\ref{propn: liminf expression engel}}
By Lemma~\ref{lemma: cyclic dilation}, each $D_{j,t}^{(\sigma)}(a)$ is minimized at $a=0$. Therefore
\[
\liminf_{n\to\infty}s_j(n)/n=
\sum_{t=j+1}^{\infty}\frac{\alpha_P(j,t)}{(t+1)(t+2)}\quad\text{and}
\]
\[
\liminf_{n\to\infty}t_j(n)/n=
\sum_{t=2}^{\infty}\frac{\alpha_E(j,t)}{t(t-1)},
\]
where for $\sigma\in \{P, E\}$,
\[
\alpha_\sigma(j,t):=D_{j,t}^{(\sigma)}(0)
\]
or equivalently,
\[
\alpha_\sigma(j,t)
=
\sum_{\varnothing\neq B\subseteq\mathcal D_\sigma(j,t)}
(-1)^{|B|+1}
\frac1{\lcm_{d\in B}m(d)}.
\]
Let $\sigma\in \{P, E\}$. For each fixed \(T\), the integer \(L_{\sigma}(j,T)\) is fixed. Hence, for every
\(m\geq L_{\sigma}(j,T)\), we have
\[
L_{\sigma}(j,T)\mid m!,
\]
and therefore
\[
m!\equiv 0 \pmod{L_{\sigma}(j,T)}.
\]
Consequently,
\[
\liminf_{n\to \infty}\frac{s_j(n)}{n} = \lim_{m\to\infty}\frac{s_j(m!)}{m!}\quad\text{and}\quad \liminf_{n\to \infty}\frac{t_j(n)}{n} = \lim_{m\to\infty}\frac{t_j(m!)}{m!}.
\]

\subsection{Proof of Propositions~\ref{propn: limsup expression pierce} and~\ref{propn: limsup expression engel}}
Again, by Lemma~\ref{lemma: cyclic dilation}, each $D_{j,t}^{(\sigma)}(a)$ is maximized when $a$ is a unit. 
Therefore
\[
\limsup_{n\to\infty}s_j(n)/n=
\sum_{t=j+1}^{\infty}\frac{\beta_P(j,t)}{(t+1)(t+2)}\quad\text{and}
\]
\[
\limsup_{n\to\infty}t_j(n)/n=
\sum_{t=2}^{\infty}\frac{\beta_E(j,t)}{t(t-1)},
\]
where $\sigma\in \{P, E\}$.
\begin{align}\label{eqn: beta alternate defn}
    \beta_\sigma(j,t):=D_{j,t}^{(\sigma)}(1)
\end{align}
or equivalently,
\[
\beta_\sigma(j,t)
=
\sum_{\varnothing\neq B\subseteq\mathcal D_\sigma(j,t)}
(-1)^{|B|+1}
\frac{1_{\CRT}(B;\sigma)}{\lcm_{d\in B}m(d)}.
\]
Let $\sigma\in \{P, E\}$. For each fixed \(T\), the integer \(L_{\sigma}(j,T)\) is fixed. Hence, for every
\(m\geq L_{\sigma}(j,T)\), we have
\[
L_{\sigma}(j,T)\mid m!,
\]
and therefore
\[
m!+1\equiv 1 \pmod{L_{\sigma}(j,T)}.
\]
Consequently,
\[
\limsup_{n\to \infty}\frac{s_j(n)}{n} = \lim_{m\to\infty}\frac{s_j(m!+1)}{m!+1}\quad\text{and}\quad \limsup_{n\to \infty}\frac{t_j(n)}{n} = \lim_{m\to\infty}\frac{t_j(m!+1)}{m!+1}.
\]
\section{Characterizing the Convergence}\label{sec: Characterizing the Convergence}
The following propositions prove the separation between $\liminf$ and $\limsup$ in the Pierce and Engel cases, respectively.
\begin{proposition}\label{propn: limit non-existence pierce}
    For every fixed $j\geq 2$,
    \begin{align*}
        \beta_P(j, j+2)-\alpha_P(j, j+2)\geq \frac{j-1}{(j+2)!}.
    \end{align*}
    Moreover, for every $t\geq j+1$, we have
    \begin{align*}
        \beta_P(j, t)\geq \alpha_P(j, t).
    \end{align*}
\end{proposition}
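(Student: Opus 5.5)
We obtain the second assertion directly from Lemma~\ref{lemma: cyclic dilation}. Take $M=M_P(j,t)$, the index set $\mathcal D_P(j,t)$, the moduli $m_i=m(d)$ and $b_i=A_P(d)$. Then $U(a)=U^{(P)}_{j,t}(a)$, and the lemma gives $|U(0)|\leq |U(1)|$ because $1$ is a unit. Dividing by $M_P(j,t)$ yields $\alpha_P(j,t)=D^{(P)}_{j,t}(0)\leq D^{(P)}_{j,t}(1)=\beta_P(j,t)$.

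For the quantitative bound at $t=j+2$, we first list $\mathcal D_P(j,j+2)$ explicitly. A tuple $p<q_2<\cdots<q_j\leq j+2$ consists of $j$ distinct integers from $\{2,\ldots,j+2\}$, a set of $j+1$ elements. It is therefore obtained by omitting a single $a\in\{2,\ldots,j+2\}$. The first entry is $2$ if $a\neq 2$ and $3$ if $a=2$, and both are prime, so every choice of $a$ is admissible. Write $d_a=(2,3,\ldots,\widehat a,\ldots,j+2)$. Then $m(d_a)=(j+2)!/a$, and $M:=M_P(j,j+2)$ divides $(j+2)!$; we work modulo $(j+2)!$, which does not change densities. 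The $j+1$ classes have total density
\[
S:=\sum_{a=2}^{j+2}\frac{a}{(j+2)!}.
\]

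For $\alpha_P$, all classes are the subgroups $\frac{(j+2)!}{a}\Z/(j+2)!\Z$, and each contains $0$. Counting in $\Z/(j+2)!\Z$, the union of $j+1$ sets sharing a common element has size at most the sum of sizes minus $j$. Hence
\[
\alpha_P(j,j+2)\leq S-\frac{j}{(j+2)!}.
\]
It then suffices to show
\[
\beta_P(j,j+2)\geq S-\frac{1}{(j+2)!},
\]
that is, that the shifted classes $A_P(d_a)+\frac{(j+2)!}{a}\Z$ are pairwise disjoint except for a total overlap of at most one residue modulo $(j+2)!$.

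The main obstacle is this disjointness for the shifted classes. Two classes for $d_a$ and $d_b$ meet if and only if
\[
g_{ab}:=\gcd\!\left(\frac{(j+2)!}{a},\frac{(j+2)!}{b}\right)\mid A_P(d_a)-A_P(d_b),
\]
i.e.\ exactly when $1_{\CRT}(\{d_a,d_b\};P)=1$. To test this we use $A_P=-\sum_i(-1)^{j-i}Q_i$, where $Q_i$ is the product of the tail $q_i\cdots q_j$. For $a<b$, the tuples $d_a$ and $d_b$ share every entry larger than $b$. We reduce $A_P(d_a)-A_P(d_b)$ modulo a suitable prime power dividing $g_{ab}$; the recurrence $A(q_2,\ldots,q_k;1)=1-q_kA(q_2,\ldots,q_{k-1};1)$ and the fact that $A\equiv 1\pmod{q_j}$ will help here. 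The goal is a nonzero residue, for instance $\pm1$ or $\pm2$ modulo a factor such as $b$ or $b-1$ that divides $g_{ab}$. This would show that the difference is not divisible by $g_{ab}$ for all pairs, except possibly the degenerate pair involving $d_2=(3,4,\ldots,j+2)$ and $d_3=(2,4,\ldots,j+2)$. For that pair we bound the overlap directly: it has at most $(j+2)!/\lcm(m(d_2),m(d_3))$ elements. If the overlap turns out larger than one residue, the fallback is to compare $\beta_P-\alpha_P$ term by term over pairs through Bonferroni. In the $\alpha$ case every pair intersects in a subgroup of density $1/\lcm$, while in the $\beta$ case almost all pairs are empty, and the surplus of at least $j-1$ residues should survive.
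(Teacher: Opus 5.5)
\paragraph{What is already correct.} Your framework is sound and matches the paper's. The second assertion is exactly Lemma~\ref{lemma: cyclic dilation}, comparing $0$ with the unit $1$. Your enumeration of $\mathcal D_P(j,j+2)$ is correct. Your bound $|U(0)|\le\sum_{a=2}^{j+2}a-j$ (all classes are subgroups containing $0$) gives the same numerator $j-1$ as the paper's exact count, since $\sum_{r\le j+2}\varphi(r)\le 1+\sum_{r=2}^{j+2}(r-1)$. The pair $\{d_2,d_3\}$ needs no fallback: $A_P(d_2)=A_P(d_3)$ and $\lcm\bigl((j+2)!/2,(j+2)!/3\bigr)=(j+2)!$, so the two classes share exactly one residue.

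\paragraph{The gap.} The step you yourself call the main obstacle is never proved: that $g_{ab}\nmid A_P(d_a)-A_P(d_b)$ for every $a<b$ with $b\ge4$. This is the whole content of the quantitative bound; the paper proves it as Lemma~\ref{lemma: CRT incompatibility}. You only describe what you hope to find, and the Bonferroni fallback is not an argument either. Moreover, the moduli you name do not work as stated. First, $b$ need not divide $g_{ab}=(j+2)!/\lcm(a,b)$; it does not when $b$ is prime and $2b>j+2$. Second, reducing the full difference modulo $b-1$ is inconclusive whenever $2(b-1)\le j+2$. The common tail $(b+1,\ldots,j+2)$ contributes a factor $\pm\prod_{r=b+1}^{j+2}r$ to the difference, and this product is then divisible by $b-1$.

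\paragraph{The missing idea.} Cancel the common tail first. By Lemma~\ref{lemma: tail stripping},
\[
A_P(d_a)-A_P(d_b)=\pm\prod_{r=b+1}^{j+2}r\cdot(B_{a,b}-B_{b,b}),
\qquad
g_{ab}=\prod_{r=b+1}^{j+2}r\cdot\frac{b!}{\lcm(a,b)}.
\]
Here $B_{b,b}=A_P(3,\ldots,b-1)$, and $B_{a,b}$ is $A_P$ of the $q$-tuple of $d_a$ with all entries larger than $b$ removed. So what must be shown is $b!/\lcm(a,b)\nmid B_{a,b}-B_{b,b}$.

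The paper does this by parity. For $b\ge4$ the modulus $b!/\lcm(a,b)$ is even. The recurrence $A_P(\mathbf q,q)=1-qA_P(\mathbf q)$ makes $B_{a,b}-B_{b,b}$ odd, except when $a=b-1$ with $b$ odd; that case is settled modulo $b-2$.

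Your $b-1$ idea also works once the tail is removed:
\begin{enumerate}
\item For $b\ge5$ and $a\ne b-1$, the recurrence gives $B_{b,b}\equiv1$ and $B_{a,b}=1-b\,A_P(\ldots,b-1)\equiv1-b\equiv0\pmod{b-1}$. So the difference is $\equiv-1\pmod{b-1}$, while $b-1\mid b!/\lcm(a,b)$.
\item For $a=b-1$ one has $B_{b-1,b}-B_{b,b}=-A_P(3,\ldots,b-2)\equiv-1\pmod{b-2}$, with $b-2\mid(b-2)!=b!/\lcm(b-1,b)$.
\item For $b=4$, direct computation gives $A_P(4)-A_P(3)=-1$.
\end{enumerate}
Without some such argument, your lower bound $\beta_P(j,j+2)\ge S-1/(j+2)!$, and hence the proposition's first assertion, remains unproved.
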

Combining the above proposition with Propositions \ref{propn: liminf expression pierce} and \ref{propn: limsup expression pierce} completes the proof of nonexistence of limit claim of Theorem \ref{thm:non-conv-s}.
\begin{proposition}\label{propn: limit non-existence engel}
    For every \(j\geq2\),
\[
\beta_E(j,3)-\alpha_E(j,3)\geq\frac1{4\cdot3^j}.
\]
Moreover, for every \(t\geq2\),
\[
\beta_E(j,t)\geq\alpha_E(j,t).
\]
\end{proposition}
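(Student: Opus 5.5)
The inequality \(\beta_E(j,t)\geq\alpha_E(j,t)\) for every \(t\geq2\) needs no new work. By \eqref{eqn: beta alternate defn}, \(\beta_E(j,t)=D^{(E)}_{j,t}(1)\) and \(\alpha_E(j,t)=D^{(E)}_{j,t}(0)\). The set \(U^{(E)}_{j,t}(a)\) has exactly the shape covered by Lemma~\ref{lemma: cyclic dilation}, with \(M=M_E(j,t)\), \(m_i=m(d)\) and \(b_i=A_E(d)\), and \(1\) is a unit. So the whole proof is the explicit estimate at \(t=3\), which I would obtain by computing both densities in \(\mathbb Z/M_E(j,3)\mathbb Z\) directly.

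\emph{Step 1: list the tuples.} The elements of \(\mathcal D_E(j,3)\) are \(d_\ast=(3;3,\ldots,3)\), with \(m(d_\ast)=3^j\), and, for \(0\leq k\leq j-1\), the tuples
\[
d_k=(2;\underbrace{2,\ldots,2}_{k},\underbrace{3,\ldots,3}_{j-1-k}),\qquad m(d_k)=2^{k+1}3^{j-1-k}.
\]
Hence \(M_E(j,3)=2^j3^j\). By the Chinese remainder theorem, a residue \(x\) is determined by its pair of components modulo \(2^j\) and \(3^j\).

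\emph{Step 2: compute \(\alpha_E(j,3)\).} At \(a=0\) every class is a subgroup, so \(x\in U(0)\) exactly when \(3^j\mid x\), or \(v_2(x)\geq k+1\) and \(v_3(x)\geq j-1-k\) for some \(k\). This is a staircase condition on the pair \((v_2(x),v_3(x))\), where \(v_2\) and \(v_3\) are independent and geometrically distributed (capped at \(j\)). Summing over the corners of the staircase gives a closed form for \(\alpha_E(j,3)\), which I expect to be a short finite sum in \(2^{-k}3^{-(j-1-k)}\). The heavy overlap among the \(d_k\) is what makes \(\alpha\) small.

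\emph{Step 3: compute \(\beta_E(j,3)\).} Here \(A_E(d)=-\sum_{i=2}^{j+1}Q_i\). Reducing modulo \(3^{j-1-k}\) shows that the trailing threes contribute \(-(1+3+\cdots+3^{j-2-k})\) modulo \(3^{j-1-k}\), up to the \(2\)-adic factor \(2^k\) coming from the leading twos. Reducing modulo \(2^{k+1}\) gives a contribution determined by the twos, with the odd factor \(3^{j-1-k}\) acting invertibly. I would tabulate \(A_E(d_k)\bmod 2^{k+1}\) and \(\bmod 3^{j-1-k}\), and then check the compatibility condition \(\gcd(m(d),m(e))\mid A_E(d)-A_E(e)\) for each pair.

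The \(3\)-parts often agree. In particular \(d_\ast\) and \(d_0\) have the same \(A_E\), because \(A_E\) depends only on the \(q\)'s. The gain must therefore come from the \(2\)-adic components: pairs \(d_k,d_{k'}\) with \(k<k'\) should be incompatible modulo \(2^{k+1}\), because \(A_E(d_{k'})\) and \(A_E(d_k)\) differ by an odd multiple of a lower power of \(2\). Granting this, the shifted classes \(\{x\equiv A_E(d_k)\pmod{m(d_k)}\}\) split into incompatible families. Then \(\beta_E(j,3)\) is computed the same way as in Step 2, but with the \(2\)-adic nesting destroyed, so \(\beta\) is close to \(\sum_d 1/m(d)\) minus only the \(d_\ast\)--\(d_0\) overlap.

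\emph{Step 4: compare.} Subtracting the two closed forms should leave a sum of terms of the form \(1/\lcm(m(d_k),m(d_{k'}))\). These are pairs that overlap at \(0\) but not at \(1\). I would bound this sum below by its single largest term, or by a geometric tail, aiming for \(\frac1{4\cdot3^j}\). A natural candidate is the overlap of \(d_\ast\) with \(d_1\), or of \(d_0\) with \(d_1\), whose lcm is of size \(4\cdot3^{j-1}\) or \(4\cdot 3^{j}\).

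The main obstacle is Step 3, that is, determining exactly which pairs of shifted classes are compatible. The residues \(A_E(d_k)\) modulo \(2^{k+1}\) must be computed carefully from the alternating structure of \(A(\,\cdot\,;-1)\). I would do this using the recurrence \(A(q_2,\ldots,q_k;-1)=-(1-q_kA(q_2,\ldots,q_{k-1};-1))\), first running through the block of twos and then through the block of threes. If full incompatibility fails for some pairs, I would fall back on Bonferroni: \(\beta\geq\sum_d\frac1{m(d)}-\sum_{\text{compatible pairs}}\frac1{\lcm}\), compared with the exact \(\alpha\) from Step 2.
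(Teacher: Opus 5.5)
Your handling of $\beta_E(j,t)\ge\alpha_E(j,t)$ via Lemma~\ref{lemma: cyclic dilation}, your list of $\mathcal D_E(j,3)$, and the staircase computation of $\alpha_E(j,3)$ (which yields $2^{1-j}-3^{-j}$) agree with the paper. The gap is in Steps~3--4, which is where all the content lies, and the compatibility pattern you assert there is false.

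In your indexing $A_E(d_k)\equiv(3^{j-1-k}+1)/2\pmod{m(d_k)}$. So for $k<k'$ the relevant difference of shifts is $3^{j-1-k'}(3^{k'-k}-1)/2$. Its $2$-adic valuation is $0$ when $k'-k$ is odd, but $1+v_2(k'-k)$ when $k'-k$ is even. Hence $d_k,d_{k'}$ are compatible at $a=1$ exactly when $k'-k$ is even and $k\le v_2(k'-k)$. For instance, for $j=3$ the tuples $(2;3,3)$ and $(2;2,2)$ have shifts $-13,-7$ and moduli $18,8$, and $2\mid 6$. Moreover, every shift is $\equiv 2^{-1}$ modulo the $3$-part of its modulus. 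So $d_\ast$ is compatible with \emph{every} $d_k$, not only with $d_0$. In particular your proposed witness $d_\ast$--$d_1$ contributes nothing, since those two shifted classes also meet.

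Step~4 is not a valid inequality either. By inclusion--exclusion, $\beta_E(j,3)-\alpha_E(j,3)=\sum_{B}(-1)^{|B|}\bigl(1-1_{\CRT}(B;E)\bigr)/\lcm_{d\in B}m(d)$. This is an alternating sum over all incompatible subsets $B$, not a sum of nonnegative pair terms, so it cannot be bounded below by a single pair. For $j=2$ the incompatible pair $(2;3),(2;2)$ has $1/\lcm=1/12$, yet $\beta_E(2,3)-\alpha_E(2,3)=4/9-7/18=1/18$. You are also looking at the light end of the family. The true gain is of order $2^{-j}$ and comes from the heavy classes with many $2$'s.

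Your Bonferroni fallback can be rescued with the correct pattern, but none of this is in the proposal. The compatible-pair overlaps total $3^{-j}(1-2^{-j})$ from $d_\ast$, plus less than $\tfrac32\,3^{-j}$ among the $d_k$. This gives $\beta_E(j,3)-\alpha_E(j,3)>2^{-j}-\tfrac72\,3^{-j}$, which is enough for $j\ge4$, with $j=2,3$ checked by hand.

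The paper sidesteps the full compatibility analysis. It bounds $\beta_E(j,3)$ below by the union of just the heaviest shifted classes: $x\equiv1\pmod{2^j}$, $x\equiv2\pmod{3\cdot2^{j-1}}$, and, for $j\ge3$, $x\equiv5\pmod{9\cdot2^{j-2}}$. These are pairwise disjoint, except that the first and third share one class modulo $9\cdot2^j$ when $j\in\{3,4\}$. The paper then compares this union with the exact value of $\alpha_E(j,3)$.
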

Combining the above proposition with Propositions \ref{propn: liminf expression engel} and \ref{propn: limsup expression engel} completes the proof of the nonexistence of limit claim of Theorem \ref{thm:non-conv-t} in the case when $j\geq 2$.

When $j = 1$, we have the following result from which the existence of the limit $\lim_{n\to\infty} t_1(n)/n$ and its value follow immediately.
\begin{proposition}\label{propn: limit for engel j = 1}
    We have that for every $t\geq 2$,
    \begin{align*}
        \beta_{E}(1, t) = \alpha_{E}(1, t) = 1-\prod_{\substack{p\leq t\\ p \text{ prime}}}\left(1-\frac{1}{p}\right)
    \end{align*}
    Additionally, we have the identity
    \begin{align*}
        \sum_{t = 2}^{\infty}\frac{1}{t(t-1)}\left(1-\prod_{\substack{p\leq t\\ p \text{ prime}}}\left(1-\frac{1}{p}\right)\right) = \sum_{p \text{ prime}}\frac{1}{p(p-1)}\prod_{\substack{q<p\\ q \text{ prime}}}\left(1-\frac{1}{q}\right).
    \end{align*}

\end{proposition}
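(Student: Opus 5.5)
For $j=1$ we have $\mathcal D_E(1,t)=\{p\leq t:\ p\text{ prime}\}$, $m(p)=p$ and $A_E(p)=-1$. So $U^{(E)}_{1,t}(a)$ is the union over primes $p\leq t$ of the classes $x\equiv -a\pmod p$, taken in $\Z/M_E(1,t)\Z$ with $M_E(1,t)=\prod_{p\leq t}p$ squarefree. The plan is to compute $D^{(E)}_{1,t}(a)$ directly for every $a$, rather than only at $a=0$ and $a=1$.

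By the Chinese remainder theorem, $\Z/M_E(1,t)\Z\cong\prod_{p\leq t}\Z/p\Z$. The complement of $U^{(E)}_{1,t}(a)$ is the set of $x$ that avoid the single residue $-a\bmod p$ in each coordinate. Its size is $\prod_{p\leq t}(p-1)$, independently of $a$. Hence $D^{(E)}_{1,t}(a)=1-\prod_{p\leq t}(1-1/p)$ for every $a$, and in particular $\beta_E(1,t)=D^{(E)}_{1,t}(1)$ equals $\alpha_E(1,t)=D^{(E)}_{1,t}(0)$. One could also check this from the inclusion–exclusion formulas: any two distinct primes have $\gcd=1$, so $1_{\CRT}(B;E)=1$ for every $B$, and the two sums coincide term by term. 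Either way, this step is immediate.

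For the identity, write $P(t)=\prod_{p\leq t}(1-1/p)$ and use $\frac{1}{t(t-1)}=\frac{1}{t-1}-\frac1t$. Summing by parts over $2\leq t\leq T$ gives
\[
\sum_{t=2}^{T}\left(\frac{1}{t-1}-\frac1t\right)(1-P(t))=\sum_{t=2}^{T}\frac{1}{t}\bigl(P(t-1)-P(t)\bigr)+\bigl(1-P(1)\bigr)-\frac{1-P(T)}{T},
\]
with $P(1)=1$. The boundary term tends to $0$ as $T\to\infty$. Moreover, $P(t-1)-P(t)$ vanishes unless $t=p$ is prime, and in that case it equals $\frac1p\prod_{q<p}(1-1/q)$. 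So the limit is $\sum_p\frac{1}{p^2}\prod_{q<p}(1-1/q)$.

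This does not match the claimed $\sum_p\frac{1}{p(p-1)}\prod_{q<p}(1-1/q)$, so the index bookkeeping needs care. The actual difficulty is reconciling which of the two sides the partial summation lands on. The alternative telescoping pairs $1-P(t)$ with $\frac{1}{t-1}$ instead. Writing $1-P(t)=\sum_{p\leq t}\frac1p\prod_{q<p}(1-1/q)$ and swapping the order of summation gives $\sum_p\frac1p\prod_{q<p}(1-1/q)\sum_{t\geq p}\frac{1}{t(t-1)}=\sum_p\frac{1}{p(p-1)}\prod_{q<p}(1-1/q)$. This version is clean and is the one I would present; the swap is justified by nonnegativity (Tonelli).

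Finally, combining the first step with Propositions~\ref{propn: liminf expression engel} and~\ref{propn: limsup expression engel} shows that the $\liminf$ and $\limsup$ of $t_1(n)/n$ coincide. This yields existence of the limit and its value as stated in Theorem~\ref{thm:non-conv-t}.
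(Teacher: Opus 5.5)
Your final argument is correct, but the partial-summation paragraph you abandon contains an outright error, not a bookkeeping subtlety. You should delete it rather than describe a ``difficulty reconciling which side the partial summation lands on.'' Your first display is already false at $T=2$: the left side is $\bigl(1-\tfrac12\bigr)\bigl(1-P(2)\bigr)=\tfrac14$, while your right side is $\tfrac14+0-\tfrac14=0$. The correct Abel summation, using $1-P(1)=0$, is
\[
\sum_{t=2}^{T}\Bigl(\frac{1}{t-1}-\frac1t\Bigr)\bigl(1-P(t)\bigr)=\sum_{t=2}^{T}\frac{P(t-1)-P(t)}{t-1}-\frac{1-P(T)}{T},
\]
with weight $1/(t-1)$, not $1/t$. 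Since $P(t-1)-P(t)$ vanishes unless $t=p$ is prime, this gives the stated right-hand side $\sum_p\frac{1}{p(p-1)}\prod_{q<p}(1-1/q)$ immediately. The value $\sum_p p^{-2}\prod_{q<p}(1-1/q)$ came only from the slip.

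With that paragraph removed, your proof is sound and differs only mildly from the paper's. For the first claim, the paper notes two things: the complement of $U^{(E)}_{1,t}(0)$ is the unit group of $\Z/M_E(1,t)\Z$, and translation by $1$ carries $U^{(E)}_{1,t}(1)$ onto $U^{(E)}_{1,t}(0)$. Your coordinatewise CRT count shows at once that $D^{(E)}_{1,t}(a)$ is the same for every $a$. This is the same observation, since translation by $a$ works equally well. For the identity, the paper uses exactly the corrected summation by parts, in the form $\sum_{t\geq 2}F_t/(t(t-1))=F_2+\sum_{t\geq 2}(F_{t+1}-F_t)/t$, and absorbs $F_2=\tfrac12$ as the $p=2$ term. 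You instead write $1-P(t)$ as a telescoping prime sum and interchange the order of summation by Tonelli, using $\sum_{t\geq p}\frac{1}{t(t-1)}=\frac{1}{p-1}$. These are the same computation read in opposite directions. Your version has the small advantage of needing neither the boundary term $F_T/T\to 0$ nor a separate treatment of $p=2$.
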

We shall also prove the following quantitative bounds, finishing the proofs of Theorem~\ref{thm:non-conv-s} and Theorem~\ref{thm:non-conv-t}.
\begin{proposition}\label{propn: bounds on limsup and liminf}
    For every $j\geq 2$, we have
    \[
        \frac{1}{(j+2)!}\leq \liminf_{n\to\infty}\frac{s_j(n)}{n}\leq \limsup_{n\to\infty}\frac{s_j(n)}{n}\leq \exp\left(-\frac{j}{e}+O(\log j)\right),
    \]
    where the implicit constant is absolute and independent of $j$.
\end{proposition}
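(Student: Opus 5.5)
Two bounds are needed: a lower bound on the $\liminf$ and an upper bound on the $\limsup$. Both go through the expressions in Propositions~\ref{propn: liminf expression pierce} and~\ref{propn: limsup expression pierce}.

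For the lower bound we keep only the term $t=j+1$, so
\[
\liminf_{n\to\infty}\frac{s_j(n)}{n}\geq \frac{\alpha_P(j,j+1)}{(j+2)(j+3)}.
\]
It then suffices to bound $\alpha_P(j,j+1)=D^{(P)}_{j,j+1}(0)$ from below by the density of a single residue class. Take $d=(2;3,4,\ldots,j+1)\in\mathcal D_P(j,j+1)$; it exists because $2<3<\cdots<j+1$. Then $m(d)=(j+1)!$, and the class $x\equiv 0\pmod{(j+1)!}$ has density $1/(j+1)!$. This alone gives
\[
\liminf_{n\to\infty}\frac{s_j(n)}{n}\geq \frac{1}{(j+1)!\,(j+2)(j+3)},
\]
which is weaker than the claimed $1/(j+2)!$. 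To recover the full bound we sum over all $t\geq j+1$. For each such $t$, take the tuple $d_t=(2;t-j+2,\ldots,t)$, whose entries are consecutive, and use $\alpha_P(j,t)\geq 1/m(d_t)$. Alternatively, and more simply, the bound $1/(j+2)!$ is the one already proved in \cite{Ba-Vu}, and we may just cite that. I would cite it, and add the one-line remark that it is also consistent with Proposition~\ref{propn: liminf expression pierce}.

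The real content is the upper bound. Here I would not use the formula for $\beta_P$ at all. Instead I would bound $s_j(n)$ directly through the nested structure. By the induction carried out in Section~\ref{sec: The Cyclic Dilation Lemma}, every nonzero $r\in S_j(n)$ satisfies $r<n/(j+2)$, which already gives $s_j(n)\leq n/(j+2)+1$. To do better, consider the trajectory $r_1>r_2>\cdots>r_j=r$ with quotients $q_2<q_3<\cdots<q_j\leq t$, where $r\in I_t(n)$. For this trajectory to exist, the quotients must increase strictly. So the $j-1$ quotients $q_2,\ldots,q_j$, together with $p<q_2$, form an increasing chain inside $\{2,\ldots,t\}$. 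This forces $t\geq j+1$, but it also imposes congruence conditions.

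For $r\in I_t(n)$, the union bound gives
\[
\beta_P(j,t)\leq \sum_{d\in\mathcal D_P(j,t)}\frac{1}{m(d)}\leq \sum_{2\leq p<q_2<\cdots<q_j\leq t}\frac{1}{pq_2\cdots q_j}.
\]
The right-hand side is at most the elementary symmetric sum $e_j(1/2,\ldots,1/t)$, which is bounded by $H_t^j/j!$ with $H_t=\sum_{k\leq t}1/k\leq \log t+1$. Combining this with the trivial bound $\beta_P\leq1$, we get
\[
\limsup_{n\to\infty}\frac{s_j(n)}{n}\leq \sum_{t\geq j+1}\frac{\min\{1,(\log t+1)^j/j!\}}{(t+1)(t+2)}.
\]
We split the sum at $t=T$. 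The tail $t>T$ contributes at most $1/T$. The range $j+1\leq t\leq T$ contributes at most $(\log T+1)^j/j!$ times $\sum_{t\geq j+1}(t+1)^{-2}\ll 1/j$.

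Now choose $\log T=\lambda j$ and optimize. Using $j!\approx (j/e)^j$, the first part is about $(e(\lambda j+1)/j)^j\approx (e\lambda)^j$. This is small only when $\lambda<1/e$. With $\lambda=1/e$ it becomes $e^{O(\log j)}$, while the tail becomes $e^{-j/e}$. Balancing the two with $\lambda$ slightly below $1/e$ gives a total of
\[
\exp\left(-\frac{j}{e}+O(\log j)\right).
\]
I expect this to be the main obstacle. The quantity $(e\lambda)^j$ at $\lambda=1/e-\delta$ decays like $e^{-e\delta j}$, so the first term can be made $\ll e^{-j/e}\,j^{O(1)}$ only by taking $\delta\approx(\log j)/j$. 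One must also check that the $+1$ inside $\log t+1$ does not destroy the exponent. Since $(1+1/(\lambda j))^j\approx e^{1/\lambda}=e^{e}$, a constant, it does not, and the bound $\exp(-j/e+O(\log j))$ follows. Finally, the inequality $\liminf\leq\limsup$ in the middle of the statement is trivial.
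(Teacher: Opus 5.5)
\textbf{Gap in the upper bound.} As you estimate it, the upper bound does not reach the exponent $j/e$. The union bound $\beta_P(j,t)\le e_j(1/2,\ldots,1/t)\le(1+\log t)^j/j!$ is fine; it is a slightly cleaner form of the paper's bound on $\Sigma_{j,t}$. The problem is the next step. Bounding the range $j+1\le t\le T$ by $\frac{(1+\log T)^j}{j!}\sum_{t\ge j+1}(t+1)^{-2}$ discards exactly the decay that is needed. With $\log T=\lambda j$, your total is $\exp\bigl(j(1+\log\lambda)+O(\log j)\bigr)+e^{-\lambda j}$. The first term is $\le e^{-j/e+O(\log j)}$ only when $\lambda\le e^{-1-1/e}+o(1)\approx 0.255$, and then the tail $e^{-\lambda j}$ is exponentially larger than $e^{-j/e}$. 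The optimal balance is $1+\log\lambda=-\lambda$, i.e.\ $\lambda\approx 0.2785$, and it yields only $\exp(-0.2785\,j+O(\log j))$. Your remark that $\delta\approx(\log j)/j$ suffices is an arithmetic slip: then $e^{-e\delta j}=j^{-e}$, which is only polynomially small.

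\textbf{The fix.} The large values of $(1+\log t)^j/j!$ occur for $t$ near $T$, where the weight $1/((t+1)(t+2))$ is about $1/T^2$. You must keep that weight rather than averaging it to $1/j$. Since $u\mapsto(1+\log u)^j/u^2$ is increasing while $1+\log u<j/2$, taking $\log T=j/e$ and $j\ge 8$ gives
\[
\sum_{j+1\le t\le T}\frac{(1+\log t)^j}{j!\,t^2}\le\frac{T\,(1+\log T)^j}{j!\,T^2}=\frac{(1+j/e)^j}{j!}\,e^{-j/e}\le e^{e}\,e^{-j/e},
\]
using $j!\ge(j/e)^j$. Together with the tail $1/T=e^{-j/e}$, this proves the claim, even with $O(1)$ in place of $O(\log j)$. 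The paper does essentially this via the substitution $x=1+\log u$ and the monotonicity of $e^{-x}x^{j-1}$ on $[1,X]$.

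\textbf{Lower bound.} Citing Baraskar--Vukusic is legitimate. However, your alternative with $d_t=(2;t-j+2,\ldots,t)$ fails, since $\sum_{t\ge j+1}\frac{(t-j+1)!}{2\,t!\,(t+1)(t+2)}=\frac{1}{j\,(j+2)!}$. Use instead the fixed tuple $(2;3,\ldots,j+1)$, which lies in $\mathcal D_P(j,t)$ for every $t\ge j+1$; it gives $\sum_{t\ge j+1}\frac{1}{(j+1)!\,(t+1)(t+2)}=\frac{1}{(j+2)!}$ exactly. The paper goes further: it combines monotonicity of $\alpha_P(j,t)$ in $t$, summation by parts, and the exact value of $\alpha_P(j,j+2)$ to get the strictly larger bound $\bigl(1+\sum_{r\le j+2}\varphi(r)\bigr)/(j+3)!$.
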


\begin{proposition}\label{propn: bounds on engel limsup and liminf}
For every \(j\geq 2\), we have
\[
\frac{3}{2^{j+1}}-\frac{1}{2\cdot 3^j}
\leq
\liminf_{n\to\infty}\frac{t_j(n)}{n}
\leq
\limsup_{n\to\infty}\frac{t_j(n)}{n}
<
2^{(1-j)/2}.
\]
\end{proposition}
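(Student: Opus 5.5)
The lower bound comes from summing explicit minimal densities $\alpha_E(j,t)$ over $t$. The upper bound comes from showing that $T_j(n)$ lives below height roughly $n2^{-(j-1)/2}$, so only intervals $J_t$ with large $t$ contribute. The middle inequality $\liminf\le\limsup$ is trivial.

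\emph{Lower bound.} By Proposition~\ref{propn: liminf expression engel},
\[
\liminf_{n\to\infty} t_j(n)/n=\sum_{t\ge2}\alpha_E(j,t)/(t(t-1)),
\]
where $\alpha_E(j,t)=D^{(E)}_{j,t}(0)$ is the density of the union of the subgroups $m(d)\mathbb Z$ over $d\in\mathcal D_E(j,t)$. Enlarging the family of tuples only enlarges this union. So I may drop tuples freely and monotonicity in $t$ comes for free. I then bound $\alpha_E$ in two ranges.

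For $t=2$, the tuple $(2;2,\ldots,2)$ gives $\alpha_E(j,2)\ge 2^{-j}$.

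For $t\ge3$, I keep the tuples $(2;2,\ldots,2,3,\ldots,3)$, whose moduli are $2^k3^{j-k}$ for $1\le k\le j$, together with $(3;3,\ldots,3)$, whose modulus is $3^j$. All of these are nondecreasing with prime first entry, hence lie in $\mathcal D_E(j,t)$. The union of their multiples is
\[
\{x:\min(v_2(x),j)+\min(v_3(x),j)\ge j\}.
\]
Conditioning on $a=\min(v_2(x),j)$, its density is
\[
\sum_{a<j}2^{-a-1}3^{-(j-a)}+2^{-j}=2^{1-j}-3^{-j},
\]
by a geometric sum.

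Since $\sum_{t\ge3}1/(t(t-1))=1/2$, these two bounds give
\[
\liminf_{n\to\infty}\frac{t_j(n)}{n}\ \ge\ \frac{2^{-j}}{2}+\frac{2^{1-j}-3^{-j}}{2}=\frac{3}{2^{j+1}}-\frac{1}{2\cdot3^j},
\]
which is the claimed lower bound.

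\emph{Upper bound.} I will show that every nonzero $r\in T_j(n)$ satisfies $r<n\,2^{-(j-1)/2}$, up to a harmless constant that I will track. This forces $T_j(n)\cap J_t(n)=\varnothing$ for all $t<t_0$, where $t_0-1\approx 2^{(j-1)/2}$. Since each $\beta_E(j,t)\le1$, Proposition~\ref{propn: limsup expression engel} then gives
\[
\limsup_{n\to\infty} t_j(n)/n\le\sum_{t\ge t_0}\frac{1}{t(t-1)}=\frac{1}{t_0-1}.
\]
For strict inequality, it suffices to exhibit one $t\ge t_0$ with $\beta_E(j,t)<1$. This holds, for instance, because every modulus $m(d)\ge 2^j$, so the union misses residues for small $t$.

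The size bound itself will come from a two-step halving property along an Engel chain $n=q_kr_{k-1}-r_k$ with $q_2\le q_3\le\cdots$, namely $r_{k+1}<r_{k-1}/2$. If $r_k\le r_{k-1}/2$, this is immediate from $r_{k+1}<r_k$. Otherwise, I will write
\[
r_{k+1}=q_{k+1}r_k-n=q_{k+1}r_k-q_kr_{k-1}+r_k.
\]
I will then use $q_{k+1}\ge q_k$ together with the ceiling definition $q_{k+1}=\lceil n/r_k\rceil$ to show $r_{k+1}\le r_{k-1}-r_k<r_{k-1}/2$. Starting from $r_1<n$ (from Lemma~\ref{lemma: T1 characterization}) or $r_1\le n/2$ where available, iterating gives $r_j\ll n2^{-(j-1)/2}$.

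\emph{Main obstacle.} I expect the hard step to be this halving inequality in the case $r_k>r_{k-1}/2$, and getting the constants exact so that the bound is precisely $2^{(1-j)/2}$ rather than a constant multiple of it. If the direct argument fails, my fallback is to bound $\beta_E(j,t)\le\min\bigl(1,\sum_{d\in\mathcal D_E(j,t)}1/m(d)\bigr)$. I would then sum this estimate against the weights $1/(t(t-1))$, splitting at $t\approx2^{(j-1)/2}$. The cost is that checking the numerics becomes messier.
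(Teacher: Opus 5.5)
Your lower bound is correct and is essentially the paper's argument. The paper uses summation by parts together with Lemma~\ref{lem:monotonicity of alpha}, which is your observation that enlarging $\mathcal D_E(j,t)$ only enlarges a union of subgroups, and it uses the same two values $\alpha_E(j,2)=2^{-j}$ and $\alpha_E(j,3)=2^{1-j}-3^{-j}$.

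\textbf{The upper bound rests on a false claim.} Nonzero elements of $T_j(n)$ are \emph{not} confined below $n2^{-(j-1)/2}$. Start from $r_0=n-1$. Then $(-n)\bmod(n-2^{k-1})=n-2^k$ as long as $2^k<n$, so $n-2^j\in T_j(n)$. Your two-step halving fails on exactly this chain. When $q_k=q_{k+1}=q$ one has $r_k-r_{k+1}=q(r_{k-1}-r_k)$. So it is the gaps below $n$ that grow geometrically, while the values themselves stay near $n$; the inequality $r_{k+1}\le r_{k-1}-r_k$ is badly wrong.

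Structurally, apply Lemma~\ref{lem:congruence-criterion engel} with $t=2$, where $A_E(2;2,\ldots,2)=1-2^j\equiv 1\pmod{2^j}$. It gives $T_j(n)\cap(n/2,n)=\{r\in(n/2,n):r\equiv n\pmod{2^j}\}$. Your own lower bound uses exactly this band, through $\alpha_E(j,2)=2^{-j}>0$. In the Pierce case, nonzero elements of $S_j(n)$ lie below $n/(j+2)$, but here no band $J_t(n)$ is empty for large $n$. So the decay in $j$ has to come from the density inside each band, not from the support shrinking.

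\textbf{Your fallback is the right route and is what the paper does, but it is missing its key estimate.} The union bound gives $\beta_E(j,t)\le\sum_{d\in\mathcal D_E(j,t)}1/m(d)\le H_j(t)$, where $H_j(t):=\sum_{2\le q_1\le\cdots\le q_j\le t}(q_1\cdots q_j)^{-1}$. Record the multiplicity $a_r$ of each value $r$ and eliminate $a_2$:
\[
2^jH_j(t)=\sum_{\substack{a_3,\ldots,a_t\geq 0\\ a_3+\cdots+a_t\leq j}}\prod_{r=3}^{t}\Bigl(\frac{2}{r}\Bigr)^{a_r}\leq\prod_{r=3}^{t}\Bigl(1-\frac{2}{r}\Bigr)^{-1}=\frac{t(t-1)}{2}.
\]
Hence $\beta_E(j,t)/(t(t-1))\le\min\{1/(t(t-1)),2^{-(j+1)}\}$, uniformly in $t$. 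The crossover of these two bounds is at $t\approx 2^{(j+1)/2}$, not $2^{(j-1)/2}$. With $T=\lceil 2^{(j+1)/2}\rceil$ one gets $(T-1)2^{-(j+1)}+1/T<2^{(1-j)/2}$, and the strictness comes from $T-1<2^{(j+1)/2}$.

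Splitting at $\lceil 2^{(j-1)/2}\rceil$ with the trivial tail bound $\beta_E\le 1$ already fails for $j=4$. There the split point is $3$, the tail $\sum_{t>3}\frac{1}{t(t-1)}$ equals $\frac13$, and the $t=2$ term alone is $\frac{\beta_E(4,2)}{2}=\frac1{32}$. Together these exceed $2^{-3/2}$.
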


\subsection{Proof of Proposition~\ref{propn: limit for engel j = 1}}

Put
\[
P_t:=\prod_{\substack{p\leq t\\ p\ \mathrm{prime}}}p.
\]
Since
\[
\mathcal D_E(1,t)=\{p\leq t:p\ \mathrm{prime}\},
\qquad
m(p)=p,
\qquad
A_E(p)=-1,
\]
we have
\[
U_{1,t}^{(E)}(0)
=
\bigcup_{\substack{p\mid P_t\\ p\text{ prime}}}
\{x\in\mathbb Z/P_t\mathbb Z:x\equiv0\pmod p\}.
\]
Its complement consists of the units modulo \(P_t\), and therefore
\[
\alpha_E(1,t)
=
1-\frac{\varphi(P_t)}{P_t}
=
1-\prod_{\substack{p\leq t\\ p\ \mathrm{prime}}}
\left(1-\frac1p\right).
\]
Moreover, translation by \(1\) maps \(U_{1,t}^{(E)}(1)\) bijectively onto
\(U_{1,t}^{(E)}(0)\), so
\[
\beta_E(1,t)=\alpha_E(1,t).
\]

\comO{In the equation below can we use the $F_t$ rather than $F_t$ asssuming it dosen't hold any significance or relation with previous expressions.}
\comS{Yes, changed it}

Set
\[
F_t:=
1-\prod_{\substack{p\leq t\\ p\ \mathrm{prime}}}
\left(1-\frac1p\right).
\]
Summation by parts gives
\[
\sum_{t=2}^{\infty}\frac{F_t}{t(t-1)}
=
F_2+\sum_{t=2}^{\infty}\frac{F_{t+1}-F_t}{t}.
\]
Now \(F_{t+1}=F_t\) unless \(t+1=p\) is prime, in which case
\[
F_p-F_{p-1}
=
\frac1p
\prod_{\substack{q<p\\q\ \mathrm{prime}}}
\left(1-\frac1q\right).
\]
Since \(F_2=1/2\), the term corresponding to \(p=2\) may be included in
the resulting prime sum, giving
\[
\sum_{t=2}^{\infty}\frac{F_t}{t(t-1)}
=
\sum_{p\ \mathrm{prime}}
\frac{1}{p(p-1)}
\prod_{\substack{q<p\\q\ \mathrm{prime}}}
\left(1-\frac1q\right).
\]
\subsection{Proof of Proposition~\ref{propn: limit non-existence pierce}}
Recall, for \(j\ge 2\) and \(q_2,\ldots,q_j\in\Z\),
\[
A_P(q_2,\ldots,q_j)
:=1-q_j+q_{j-1}q_j-\cdots+(-1)^{j-1}q_2q_3\cdots q_j .
\]
Thus, if \(\mathbf q=(q_2,\ldots,q_j)\) and \(q\) is appended at the end, then
\begin{align}\label{eqn: recurrence for A}
A_P(\mathbf q,q)=1-qA_P(\mathbf q).
\end{align}

\begin{lemma}\label{lemma: tail stripping}
Let \(\mathbf u,\mathbf v\) be two finite integer tuples, and let
\(\mathbf t=(t_1,\ldots,t_r)\) be a common tail. Then
\[
A_P(\mathbf u,\mathbf t)-A_P(\mathbf v,\mathbf t)
=
(-1)^r\left(\prod_{\ell=1}^r t_\ell\right)
\bigl(A_P(\mathbf u)-A_P(\mathbf v)\bigr).
\]
\end{lemma}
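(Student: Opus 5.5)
The plan is to argue by induction on the length \(r\) of the common tail \(\mathbf t\), using only the recurrence \eqref{eqn: recurrence for A}, \(A_P(\mathbf q,q)=1-qA_P(\mathbf q)\). This recurrence is valid for any finite integer tuple \(\mathbf q\), including the empty tuple, under the convention \(A_P(\varnothing)=1\) of \eqref{eq:convention-A}. Before starting, I will check that the recurrence holds with the empty-tuple convention: for a single entry \(q\), the definition gives \(A_P(q)=1-q=1-q\cdot A_P(\varnothing)\), so it does. This lets \(\mathbf u\) or \(\mathbf v\) be empty with no separate treatment.

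The base case \(r=0\) is trivial: the empty product equals \(1\) and \((-1)^0=1\), so both sides are \(A_P(\mathbf u)-A_P(\mathbf v)\).

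For the inductive step, write \(\mathbf t=(\mathbf t',t_r)\), where \(\mathbf t'=(t_1,\ldots,t_{r-1})\). Apply the recurrence to each term with the last appended entry \(t_r\):
\[
A_P(\mathbf u,\mathbf t)-A_P(\mathbf v,\mathbf t)
=\bigl(1-t_rA_P(\mathbf u,\mathbf t')\bigr)-\bigl(1-t_rA_P(\mathbf v,\mathbf t')\bigr)
=-t_r\bigl(A_P(\mathbf u,\mathbf t')-A_P(\mathbf v,\mathbf t')\bigr).
\]
The induction hypothesis for the tail \(\mathbf t'\) of length \(r-1\) turns the last difference into \((-1)^{r-1}\bigl(\prod_{\ell=1}^{r-1}t_\ell\bigr)\bigl(A_P(\mathbf u)-A_P(\mathbf v)\bigr)\). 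Multiplying by \(-t_r\) gives exactly the claimed sign \((-1)^r\) and the full product \(\prod_{\ell=1}^{r}t_\ell\).

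No step here is a real obstacle. The only point needing care is that the recurrence appends at the \emph{end} of the tuple, which is why the common part must be a tail and not a head. Equivalently, \(A_P(\mathbf u,\mathbf t)\) is an affine function of \(A_P(\mathbf u)\) whose slope \((-1)^r\prod t_\ell\) depends only on \(\mathbf t\), so the constant terms cancel in the difference.
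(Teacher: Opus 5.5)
Your proof is correct and is essentially the paper's argument: the paper applies the recurrence \eqref{eqn: recurrence for A} once to get the one-step identity $A_P(\mathbf u,q)-A_P(\mathbf v,q)=-q\bigl(A_P(\mathbf u)-A_P(\mathbf v)\bigr)$ and then iterates it over $t_1,\ldots,t_r$. Your induction on $r$ is just that iteration written out formally. Your check that the empty-tuple convention $A_P(\varnothing)=1$ is compatible with the recurrence is a small detail the paper leaves implicit.
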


\begin{proof}
The recurrence in~\eqref{eqn: recurrence for A} gives
\[
A_P(\mathbf u,q)-A_P(\mathbf v,q)
=
\bigl(1-qA_P(\mathbf u)\bigr)-\bigl(1-qA_P(\mathbf v)\bigr)
=
-q\bigl(A_P(\mathbf u)-A_P(\mathbf v)\bigr).
\]
Iterating this identity over \(t_1,\ldots,t_r\) gives the stated formula.
\end{proof}

\begin{lemma}\label{lemma: CRT incompatibility}
Let \(J\ge 4\). For \(2\le a\le J\), define the integer tuples
\[
\mathbf q_a^{(J)}
:=
\begin{cases}
(4,5,\ldots,J), & a=2,3,\\
(3,4,\ldots,\widehat a,\ldots,J), & 4\le a\le J.
\end{cases}
\]
Then, for \(2\le a<b\le J\),
\[
\gcd(J!/a,J!/b)\mid A_P(\mathbf q_a^{(J)})-A_P(\mathbf q_b^{(J)})
\]
if and only if \((a,b)=(2,3)\).
\end{lemma}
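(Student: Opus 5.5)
The plan is to compute the two moduli explicitly, remove the common tail of the two tuples with Lemma~\ref{lemma: tail stripping}, and then detect non-divisibility with a single small modulus. For $2\le a<b\le J$ both $J!/a$ and $J!/b$ divide $J!$, so
\[
\gcd(J!/a,J!/b)=\frac{J!}{\lcm(a,b)}.
\]
For $(a,b)=(2,3)$ the two tuples coincide, so the difference is $0$ and divisibility holds trivially. From now on $b\ge4$.

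\textbf{Tail stripping.} In every remaining case, $\mathbf q_a^{(J)}$ and $\mathbf q_b^{(J)}$ share the common tail $(b+1,\ldots,J)$, whose product is $J!/b!$. Lemma~\ref{lemma: tail stripping} then gives
\[
A_P(\mathbf q_a^{(J)})-A_P(\mathbf q_b^{(J)})=\pm\frac{J!}{b!}\,\Delta,
\]
where $\Delta$ is the difference of the $A_P$-values of the truncated tuples. Hence divisibility by $J!/\lcm(a,b)$ is equivalent to
\[
\frac{b!}{\lcm(a,b)}\ \Big|\ \Delta .
\]
It will be refuted by exhibiting a modulus $g\ge2$ with $g\mid b!/\lcm(a,b)$ and $\Delta\not\equiv0\pmod g$.

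\textbf{Congruence facts.} Two facts come from the recurrence~\eqref{eqn: recurrence for A} together with the convention $A_P(\varnothing)=1$.
\begin{itemize}
\item If a tuple ends in $x$, then its $A_P$-value is $\equiv1\pmod x$.
\item If a tuple ends in $(x,x+1)$, then its $A_P$-value is $1-(x+1)A_P(\ldots,x)\equiv 1-(x+1)\equiv0\pmod x$.
\end{itemize}

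\textbf{Case $4\le a\le b-2$.} The truncated tuples are $(3,\ldots,a-1,a+1,\ldots,b)$ and $(3,\ldots,a-1,a,\ldots,b-1)$. The first ends in $(b-1,b)$ and the second ends in $b-1$, so $\Delta\equiv0-1\equiv-1\pmod{b-1}$. Take $g=b-1$. Since $\lcm(a,b)\mid ab$, the quotient $b!/\lcm(a,b)$ is a multiple of $(b-1)!/a=(b-1)\cdot(b-2)!/a$. Because $a\le b-2$, this is an integer multiple of $b-1$.

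\textbf{Case $a=b-1\ge4$.} Put $X=A_P(3,\ldots,b-2)$; the tuple $(3,\ldots,b-2)$ is nonempty because $a\ge4$. Then
\[
\Delta=(1-bX)-(1-(b-1)X)=-X\equiv-1\pmod{b-2}.
\]
Here $b!/\lcm(b-1,b)=(b-2)!$, which is divisible by $g=b-2\ge3$.

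\textbf{Case $a\in\{2,3\}$ and $b\ge5$.} The truncated tuples are $(4,\ldots,b)$ and $(3,\ldots,b-1)$. Exactly as in the first case, $\Delta\equiv-1\pmod{b-1}$. Also $b-1\mid(b-1)!/a$, since $a\le3<b-1$ gives $a\mid(b-2)!$.

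\textbf{Case $a\in\{2,3\}$ and $b=4$.} Here $\Delta=A_P(4)-A_P(3)=(1-4)-(1-3)=-1$. The relevant quotients are $4!/4=6$ when $a=2$ and $4!/12=2$ when $a=3$, and neither divides $-1$.

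\textbf{Main obstacle.} The delicate part is bookkeeping rather than any conceptual step. One must check three things:
\begin{itemize}
\item the common prefix and tail are identified correctly in each case;
\item the boundary cases ($b=4$, $a=b-1$, and possibly empty prefixes) have the claimed last entries;
\item the chosen modulus $g$ genuinely divides $b!/\lcm(a,b)$.
\end{itemize}
The last check relies on the inequality $a<g$, so that $a$ divides the remaining factorial.
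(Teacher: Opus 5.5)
Your proof is correct and has the same skeleton as the paper's. Both apply Lemma~\ref{lemma: tail stripping} with common tail $(b+1,\ldots,J)$ and reduce to $b!/\lcm(a,b)\mid\Delta$. Both check $\Delta=-1$ directly for $b=4$, and both use the same mod-$(b-2)$ computation for adjacent indices $a=b-1$.

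The real difference is the witness modulus in the generic case.

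The paper uses $2$. It observes that $b!/\lcm(a,b)$ is even for every $b\ge4$. It then shows $\Delta$ is odd using two parity facts: a tuple ending in an even entry has odd $A_P$-value, and one ending in an odd entry preceded by an even one has even $A_P$-value. This forces a split on the parity of $b$. The argument breaks down only for $a=b-1$ with $b$ odd, which is the only place the paper needs the mod-$(b-2)$ argument.

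You use $b-1$ instead, via the mod-$x$ versions of those facts. A tuple ending in $x$ has $A_P\equiv1\pmod x$, and one ending in $(x,x+1)$ has $A_P\equiv0\pmod x$. This gives $\Delta\equiv-1\pmod{b-1}$ uniformly, with no parity split.

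The trade-off is that $b-1\mid b!/\lcm(a,b)$ needs $a\le b-2$. So you must send every adjacent pair $a=b-1$ to the mod-$(b-2)$ argument, not just those with $b$ odd. The paper's evenness observation, by contrast, holds for all pairs. The two versions are about equally short, and both are complete.
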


\begin{proof}
First, if \((a,b)=(2,3)\), then
\[
\mathbf q_2^{(J)}=\mathbf q_3^{(J)}=(4,5,\ldots,J),
\]
so \(A_P(\mathbf q_2^{(J)})=A_P(\mathbf q_3^{(J)})\), and the divisibility is immediate.

Now suppose \(b\ge 4\). Define the reduced shifts
\[
B_{a,b}
:=
\begin{cases}
A_P(4,5,\ldots,b), & a=2,3,\\
A_P(3,4,\ldots,\widehat a,\ldots,b), & 4\le a<b,
\end{cases}
\]
and
\[
B_{b,b}:=A_P(3,4,\ldots,b-1).
\]
The strings \(\mathbf q_a^{(J)}\) and \(\mathbf q_b^{(J)}\) have common tail
\[
(b+1,b+2,\ldots,J).
\]
By Lemma~\ref{lemma: tail stripping},
\[
A_P(\mathbf q_a^{(J)})-A_P(\mathbf q_b^{(J)})
=
\pm
\left(\prod_{r=b+1}^J r\right)
\bigl(B_{a,b}-B_{b,b}\bigr).
\]
Also,
\[
\gcd\left(\frac{J!}{a},\frac{J!}{b}\right)
=
\frac{J!}{\operatorname{lcm}(a,b)}
=
\left(\prod_{r=b+1}^J r\right)
\frac{b!}{\operatorname{lcm}(a,b)}.
\]
Hence it suffices to show that the divisibility condition 
\[
\frac{b!}{\operatorname{lcm}(a,b)}
\mid
B_{a,b}-B_{b,b}
\]
never happens.

First note that, for \(b\ge4\),
\[
\frac{b!}{\operatorname{lcm}(a,b)}
\]
is even. Thus it is enough, in most cases, to show that
\(B_{a,b}-B_{b,b}\) is odd.

First consider the case \(b=4\). Then, for \(a=2,3\),
\[B_{a, 4}-B_{4, 4} = A_P(4)-A_P(3) = -1.\]
Thus the required divisibility fails for \(b=4\).

Assume henceforth that \(b\ge5\). From the recurrence~\eqref{eqn: recurrence for A} it follows that 
if an integer tuple ends in an even number, then its \(A\)-value is odd, whereas,
if it ends in an odd number immediately preceded by an even number, then its \(A\)-value is even.
Therefore
\[
B_{b,b}=A_P(3,4,\ldots,b-1)
\]
is even when \(b\) is even and odd when \(b\) is odd. On the other hand,
unless \(a=b-1\) with \(b\) odd, the string defining \(B_{a,b}\) has the opposite parity.
Consequently
\[
B_{a,b}-B_{b,b}
\]
is odd, and so the above even divisor cannot divide it.

It remains to handle the exceptional case \(a=b-1\) with \(b\) odd. 

Observe that
\[
B_{b-1,b}=A_P(3,4,\ldots,b-2,b)
=1-bA_P(3,4,\ldots,b-2),
\]
while
\[
B_{b,b}=A_P(3,4,\ldots,b-2,b-1)
=1-(b-1)A_P(3,4,\ldots,b-2).
\]
Thus
\[
B_{b-1,b}-B_{b,b}
=
-A_P(3,4,\ldots,b-2).
\]
Since
\[
A_P(3,4,\ldots,b-2)=1-(b-2)A_P(3,4,\ldots,b-3),
\]
we have
\[
A_P(3,4,\ldots,b-2)\equiv 1 \pmod{b-2}.
\]
But in this exceptional case
\[
\frac{b!}{\operatorname{lcm}(b-1,b)}
=
(b-2)!,
\]
which is divisible by \(b-2\), a contradiction.

Thus the only compatible pair is \((a,b)=(2,3)\).
\end{proof}

By Lemma~\ref{lemma: cyclic dilation}, it immediately follows that $\beta_P(j,t)\geq\alpha_P(j,t)$ for every $t\geq j+1$.

It remains to prove the quantitative lower bound. The set $\mathcal{D}_P(j,j+2)$ consists exactly of the following elements
\[
d_2=(3;4,5,\dots,j+2),
\]
and, for $3\le a\le j+2$,
\[
d_a=(2;3,4,\dots,\widehat a,\dots,j+2).
\]
For every $2\le a\le j+2$,
\[
m(d_a)=\frac{(j+2)!}{a}.
\]
Thus the aligned classes for $\alpha_P(j,j+2)$ are
\[
x\equiv 0 \pmod{(j+2)!/a},\qquad 2\le a\le j+2.
\]
Noting that $M_P(j, j+2) = (j+2)!$, we see that inside $\mathbb Z/(j+2)!\mathbb Z$, this is the union of the unique subgroups of orders $2,3,\dots,j+2$. Hence its size is the number of elements of additive order at most $j+2$, namely
\(
\sum_{r=1}^{j+2}\varphi(r).
\)
Therefore
\begin{align}\label{eqn: alpha j j+2}
    \alpha_{P}(j,j+2) = \frac{1}{(j+2)!}\sum_{r=1}^{j+2}\varphi(r).
\end{align}
For the unit-shifted classes, write
\[
C_a:=\{x\in \mathbb Z/(j+2)!\mathbb Z:x\equiv A_P(d_a)\pmod{(j+2)!/a}\}.
\]
Each $C_a$ has size $a$. Moreover $C_2$ and $C_3$ intersect in exactly one residue class, because $A_P(d_2) = A_P(d_3)$. All other pairs are disjoint as 
\[
C_a\cap C_b\neq\varnothing
\iff
\gcd((j+2)!/a,(j+2)!/b)\mid A_P(d_a)-A_P(d_b),
\]
and the above fails except for the pair $\{2,3\}$ by Lemma~\ref{lemma: CRT incompatibility}.

So
\[
|C_2\cup C_3\cup\cdots\cup C_{j+2}| = \sum_{a=2}^{j+2}a-1 = \frac{(j+2)(j+3)}{2}-2,
\]
and therefore
\[
\beta_{P}(j,j+2) = \frac{\frac{(j+2)(j+3)}{2}-2}{(j+2)!}.
\]
Thus
\[
\beta_{P}(j,j+2)-\alpha_{P}(j,j+2) = \frac{\frac{(j+2)(j+3)}{2}-2-\sum_{r=1}^{j+2}\varphi(r)}{(j+2)!}.
\]

This is already strictly positive. Indeed,
\[
\sum_{r=1}^{j+2}\varphi(r) \le 1+\sum_{r=2}^{j+2}(r-1) = 1+\frac{(j+1)(j+2)}{2},
\]
so the numerator is at least
\[
\frac{(j+2)(j+3)}{2}-2-\left(1+\frac{(j+1)(j+2)}{2}\right) = j-1>0.
\]
Hence we have
\[
\beta_{P}(j,j+2)-\alpha_{P}(j,j+2) \ge \frac{j-1}{(j+2)!} > 0.
\]

\subsection{Proof of Proposition~\ref{propn: limit non-existence engel}}

Recall, for \(j\geq 2\) and \(q_2,\ldots,q_j\in\mathbb Z\),
\[
A_E(q_2,\ldots,q_j)
:=
-\left(
1+q_j+q_{j-1}q_j+\cdots+q_2q_3\cdots q_j
\right).
\]
As mentioned in \eqref{eq:convention-A}, we will adopt the convention
\[
A_E(\varnothing):=-1.
\]
Thus, if \(\mathbf q=(q_2,\ldots,q_j)\) and \(q\) is appended at the end, then
\begin{align}\label{eqn: recurrence for AE}
A_E(\mathbf q,q)=qA_E(\mathbf q)-1.
\end{align}

\begin{lemma}\label{lemma: engel tail stripping}
Let \(\mathbf u,\mathbf v\) be two finite integer tuples, and let
\(\mathbf t=(t_1,\ldots,t_r)\) be a common tail. Then
\[
A_E(\mathbf u,\mathbf t)-A_E(\mathbf v,\mathbf t)
=
\left(\prod_{\ell=1}^r t_\ell\right)
\bigl(A_E(\mathbf u)-A_E(\mathbf v)\bigr).
\]
\end{lemma}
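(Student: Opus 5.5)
The plan is to mirror the proof of Lemma~\ref{lemma: tail stripping}: first handle a single appended entry using the recurrence~\eqref{eqn: recurrence for AE}, then iterate over the entries of the common tail. The only difference from the Pierce case is the sign. In~\eqref{eqn: recurrence for AE} the appended entry multiplies \(A_E(\mathbf q)\) with a plus sign, so no factor \((-1)^r\) appears.

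For a single appended integer \(q\), the recurrence~\eqref{eqn: recurrence for AE} gives
\[
A_E(\mathbf u,q)-A_E(\mathbf v,q)
=\bigl(qA_E(\mathbf u)-1\bigr)-\bigl(qA_E(\mathbf v)-1\bigr)
=q\bigl(A_E(\mathbf u)-A_E(\mathbf v)\bigr).
\]
Here one should check that the recurrence holds for every finite integer tuple \(\mathbf q\), including the empty tuple. For the empty tuple it reads \(A_E(q)=-q-1\), which matches \(-(1+q)\) under the convention \(A_E(\varnothing)=-1\) from~\eqref{eq:convention-A}. For nonempty \(\mathbf q\), it follows directly from the explicit formula \(A_E(q_2,\ldots,q_j)=-\bigl(1+q_j+q_{j-1}q_j+\cdots+q_2\cdots q_j\bigr)\): appending \(q\) multiplies every existing product by \(q\) and adds a new term \(1\) inside the parentheses. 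This is the same as the general recurrence \(A(\ldots,q_k;\varepsilon)=\varepsilon(1-q_kA(\ldots;\varepsilon))\) with \(\varepsilon=-1\).

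Next, induct on the tail length \(r\); the case \(r=0\) is trivial. For the inductive step, apply the one-step identity with \(\mathbf u\) and \(\mathbf v\) replaced by \((\mathbf u,t_1,\ldots,t_{r-1})\) and \((\mathbf v,t_1,\ldots,t_{r-1})\), and with \(q=t_r\). This multiplies the difference by \(t_r\), and the inductive hypothesis supplies the factor \(\prod_{\ell<r}t_\ell\).

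No step is a real obstacle. The only care needed is to confirm that the convention for the empty tuple is consistent with the recurrence, so that the lemma also covers the cases where \(\mathbf u\) or \(\mathbf v\) is empty.
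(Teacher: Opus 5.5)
Your proposal is correct and matches the paper's proof: you derive the one-step identity $A_E(\mathbf u,q)-A_E(\mathbf v,q)=q\bigl(A_E(\mathbf u)-A_E(\mathbf v)\bigr)$ from the recurrence~\eqref{eqn: recurrence for AE} and then iterate it over $t_1,\ldots,t_r$. Your extra check that the recurrence agrees with the convention $A_E(\varnothing)=-1$, so the lemma also covers empty $\mathbf u$ or $\mathbf v$, is a useful detail that the paper leaves implicit.
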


\begin{proof}
Using the recurrence~\eqref{eqn: recurrence for AE}, we get
\[
A_E(\mathbf u,q)-A_E(\mathbf v,q)
=
\bigl(qA_E(\mathbf u)-1\bigr)
-
\bigl(qA_E(\mathbf v)-1\bigr)
=
q\bigl(A_E(\mathbf u)-A_E(\mathbf v)\bigr).
\]
Iterating this identity over \(t_1,\ldots,t_r\) gives the stated formula.
\end{proof}

\begin{lemma}\label{lemma: engel three-band data}
Let \(j\geq2\). For \(0\leq r\leq j-1\), define
\[
d_r
:=
\left(
2;
\underbrace{2,\ldots,2}_{j-1-r},
\underbrace{3,\ldots,3}_{r}
\right),
\]
and define
\[
d_*:=(3;3,\ldots,3).
\]
Then
\[
\mathcal D_E(j,3)
=
\{d_0,d_1,\ldots,d_{j-1},d_*\}.
\]
Moreover,
\[
m(d_r)=2^{j-r}3^r,
\qquad
m(d_*)=3^j,
\]
and
\[
A_E(d_r)
\equiv
\frac{3^r+1}{2}
\pmod{2^{j-r}3^r}.
\]
\end{lemma}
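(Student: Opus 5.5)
The plan is to verify the three assertions of the lemma directly: the enumeration of \(\mathcal D_E(j,3)\), the values of \(m\), and the residues of \(A_E\). No earlier result is needed beyond the definitions in~\eqref{eq:def-A}.

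\emph{Enumeration.} A tuple \((p;q_2,\ldots,q_j)\in\mathcal D_E(j,3)\) satisfies \(p\le q_2\le\cdots\le q_j\le 3\) with \(p\) prime. Hence every entry lies in \(\{2,3\}\).
\begin{itemize}
\item If \(p=3\), monotonicity forces \(q_2=\cdots=q_j=3\), which gives \(d_*\).
\item If \(p=2\), the nondecreasing string \((q_2,\ldots,q_j)\) over \(\{2,3\}\) is determined by the number \(r\in\{0,\ldots,j-1\}\) of entries equal to \(3\), all of which come last. This gives \(d_r\).
\end{itemize}
Conversely, each listed tuple visibly satisfies the defining inequalities. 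The formulas \(m(d_r)=2\cdot 2^{j-1-r}3^r=2^{j-r}3^r\) and \(m(d_*)=3\cdot3^{j-1}=3^j\) follow directly from \(m(d)=pQ_2\).

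\emph{Residue of \(A_E(d_r)\).} I will compute the sum in \(A_E(d)=-\sum_{i=2}^{j+1}Q_i\) explicitly. The tail of \((q_2,\ldots,q_j)\) consists of \(r\) threes, so the terms \(Q_{j+1},Q_j,\ldots,Q_{j-r+1}\) are \(1,3,\ldots,3^r\), summing to \((3^{r+1}-1)/2\). The remaining terms \(Q_{j-r},\ldots,Q_2\) are \(3^r\cdot 2^k\) for \(1\le k\le j-1-r\), summing to \(3^r(2^{j-r}-2)\). Therefore
\[
A_E(d_r)=-\frac{3^{r+1}-1}{2}-3^r2^{j-r}+2\cdot 3^r .
\]
Reducing modulo \(2^{j-r}3^r\) removes the middle term, leaving
\[
\frac{-3^{r+1}+1+4\cdot3^r}{2}=\frac{3^r+1}{2}.
\]

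One could instead argue via the recurrence~\eqref{eqn: recurrence for AE}, by induction on the number of trailing threes. The closed-form computation is shorter, though, so I will use it.

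The only point needing care is the bookkeeping of which \(Q_i\) carry powers of \(2\). In particular, the case \(r=j-1\) must be checked: there the \(2\)-part contributes nothing, and \(m(d_{j-1})=2\cdot3^{j-1}\). Since \(2^{j-r}-2=0\) when \(r=j-1\), the formula above remains valid in that case, so I expect no genuine obstacle.
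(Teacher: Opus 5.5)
Your proof is correct and follows the paper's argument: the enumeration of $\mathcal D_E(j,3)$ and the moduli are handled identically, and for the shift you obtain the same exact value $A_E(d_r)=\frac{3^r+1}{2}-2^{j-r}3^r$ before reducing modulo $m(d_r)$. The only difference is that you sum the closed form $-\sum_{i=2}^{j+1}Q_i$ directly as two geometric series, whereas the paper evaluates the all-$2$ and all-$3$ strings via the recurrence~\eqref{eqn: recurrence for AE} and combines them with Lemma~\ref{lemma: engel tail stripping}; your version is slightly more self-contained, and your check of the edge case $r=j-1$ is sound.
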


\begin{proof}
Since the quotients in the Engel case satisfy
\[
p\leq q_2\leq\cdots\leq q_j\leq3,
\]
the first component is either \(2\) or \(3\). If the first component is \(3\),
then every \(q_i\) is equal to \(3\), which gives \(d_*\). If the first
component is \(2\), then the nondecreasing tuple
\((q_2,\ldots,q_j)\) consists of some number of \(2\)'s followed by some
number of \(3\)'s, which gives \(d_r\) for a unique
\(0\leq r\leq j-1\).

The formulas for the moduli are immediate. To calculate the shift, put
\[
s:=j-1-r.
\]
The recurrence~\eqref{eqn: recurrence for AE} gives
\[
A_E(\underbrace{2,\ldots,2}_{s})
=
1-2^{s+1}
\]
and
\[
A_E(\underbrace{3,\ldots,3}_{r})
=
-\frac{3^{r+1}-1}{2}.
\]
Applying Lemma~\ref{lemma: engel tail stripping} with the common tail
consisting of \(r\) copies of \(3\), we obtain
\[
\begin{aligned}
A_E(
\underbrace{2,\ldots,2}_{s},
\underbrace{3,\ldots,3}_{r})
&=
A_E(\underbrace{3,\ldots,3}_{r})
+
3^r\bigl(
A_E(\underbrace{2,\ldots,2}_{s})-A_E(\varnothing)
\bigr)\\
&=
-\frac{3^{r+1}-1}{2}
+
3^r\bigl(2-2^{s+1}\bigr)\\
&=
\frac{3^r+1}{2}-2^{s+1}3^r.
\end{aligned}
\]
Since
\[
2^{s+1}3^r=2^{j-r}3^r=m(d_r),
\]
this gives
\[
A_E(d_r)
\equiv
\frac{3^r+1}{2}
\pmod{m(d_r)}.
\]
\end{proof}

By Lemma~\ref{lemma: cyclic dilation}, it immediately follows that
\[
\beta_E(j,t)\geq\alpha_E(j,t)
\]
for every \(t\geq2\). To prove that the inequality is strict for
\(j\geq2\), we consider the band \(t=3\).

By Lemma~\ref{lemma: engel three-band data}, the common modulus of the
congruence classes associated with \(\mathcal D_E(j,3)\) is
\[
\operatorname{lcm}
\left(
2^j,2^{j-1}3,\ldots,2\cdot3^{j-1},3^j
\right)
=
6^j.
\]

The aligned classes for \(\alpha_E(j,3)\) are
\[
x\equiv0\pmod{2^{j-r}3^r},
\qquad
0\leq r\leq j-1,
\]
together with
\[
x\equiv0\pmod{3^j}.
\]
Suppose first that \(x\not\equiv0\pmod{3^j}\), and let \(b\) be the unique
integer with \(0\leq b\leq j-1\) such that
\[
3^b\mid x,
\qquad
3^{b+1}\nmid x.
\]
Then \(x\) belongs to the aligned union if and only if
\[
2^{j-b}\mid x.
\]
Indeed, the divisibility
\[
2^{j-r}3^r\mid x
\]
can hold only when \(r\leq b\), and among such \(r\) the weakest condition
on the \(2\)-part occurs when \(r=b\).

By the Chinese remainder theorem, the \(2\)-primary and \(3\)-primary
conditions are independent. The proportion of residue classes modulo
\(3^j\) having exact \(3\)-adic valuation \(b\) is
\[
\frac{2}{3^{b+1}},
\]
while the proportion divisible by \(2^{j-b}\) modulo \(2^j\) is
\[
\frac{1}{2^{j-b}}.
\]
The class \(x\equiv0\pmod{3^j}\) has density \(3^{-j}\). Therefore
\begin{align}\label{eqn: alpha j 3}
\begin{aligned}
\alpha_E(j,3)
&=
\frac{1}{3^j}
+
\sum_{b=0}^{j-1}
\frac{2}{3^{b+1}}\frac{1}{2^{j-b}}\\
&=
\frac{1}{3^j}
+
\frac{2}{3\cdot2^j}
\sum_{b=0}^{j-1}\left(\frac{2}{3}\right)^b\\
&=
\frac{1}{3^j}
+
\frac{2}{2^j}
\left(1-\left(\frac{2}{3}\right)^j\right)\\
&=
2^{1-j}-3^{-j}.
\end{aligned}
\end{align}

We next consider the unit-shifted classes. By
Lemma~\ref{lemma: engel three-band data}, the first three such classes are
\[
C_0
:=
\left\{
x\in\mathbb Z/6^j\mathbb Z:
x\equiv1\pmod{2^j}
\right\},
\]
\[
C_1
:=
\left\{
x\in\mathbb Z/6^j\mathbb Z:
x\equiv2\pmod{3\cdot2^{j-1}}
\right\},
\]
and, when \(j\geq3\),
\[
C_2
:=
\left\{
x\in\mathbb Z/6^j\mathbb Z:
x\equiv5\pmod{9\cdot2^{j-2}}
\right\}.
\]

The classes \(C_0\) and \(C_1\) are disjoint. Indeed, by the Chinese
remainder theorem they intersect if and only if
\[
\gcd(2^j,3\cdot2^{j-1})=2^{j-1}
\]
divides \(2-1=1\), which is impossible for \(j\geq2\).

For \(j\geq3\), the classes \(C_1\) and \(C_2\) are also disjoint, since
\[
\gcd(3\cdot2^{j-1},9\cdot2^{j-2})
=
3\cdot2^{j-2}
\]
does not divide \(5-2=3\).

Finally,
\[
C_0\cap C_2\neq\varnothing
\]
if and only if
\[
\gcd(2^j,9\cdot2^{j-2})=2^{j-2}
\]
divides \(5-1=4\). Thus \(C_0\) and \(C_2\) intersect precisely when
\(j=3\) or \(j=4\). In either of these cases, their intersection is a
single residue class modulo
\[
\operatorname{lcm}(2^j,9\cdot2^{j-2})=9\cdot2^j,
\]
and hence has density
\[
\frac{1}{9\cdot2^j}.
\]

When \(j=2\), the classes \(C_0\) and \(C_1\) are disjoint, and therefore
\[
\beta_E(2,3)
\geq
\frac{1}{2^2}+\frac{1}{3\cdot2}
=
\frac{5}{12}.
\]
Since
\[
\alpha_E(2,3)
=
\frac12-\frac19
=
\frac{7}{18},
\]
we obtain
\[
\beta_E(2,3)-\alpha_E(2,3)
\geq
\frac{1}{36}.
\]

When \(j=3\) or \(j=4\), inclusion-exclusion for
\(C_0,C_1,C_2\) gives
\[
\begin{aligned}
\beta_E(j,3)
&\geq
\frac{1}{2^j}
+
\frac{1}{3\cdot2^{j-1}}
+
\frac{1}{9\cdot2^{j-2}}
-
\frac{1}{9\cdot2^j}\\
&=
\frac{2}{2^j}
=
2^{1-j}.
\end{aligned}
\]
Consequently,
\[
\beta_E(j,3)-\alpha_E(j,3)
\geq
\frac{1}{3^j}.
\]

When \(j\geq5\), the classes \(C_0,C_1,C_2\) are pairwise disjoint, and
hence
\[
\begin{aligned}
\beta_E(j,3)
&\geq
\frac{1}{2^j}
+
\frac{1}{3\cdot2^{j-1}}
+
\frac{1}{9\cdot2^{j-2}}\\
&=
\frac{19}{9\cdot2^j}.
\end{aligned}
\]
It follows that
\[
\beta_E(j,3)-\alpha_E(j,3)
\geq
\frac{1}{9\cdot2^j}+\frac{1}{3^j}.
\]

Combining the three cases, we obtain, for every \(j\geq2\),
\[
\beta_E(j,3)-\alpha_E(j,3)
\geq
\frac{1}{4\cdot3^j}>0.
\]

\subsection{Proof of Proposition~\ref{propn: bounds on limsup and liminf}}
\begin{lemma}\label{lem:monotonicity of alpha}
For fixed \(j\geq2\), the sequences
\(\{\alpha_P(j,t)\}_{t\geq j+1}\) and \(\{\alpha_E(j,t)\}_{t\geq2}
\)
are nondecreasing.
\end{lemma}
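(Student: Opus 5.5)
The plan is to read $\alpha_\sigma(j,t)$ as the natural density of a union of sets of integers, to observe that the index sets $\mathcal D_\sigma(j,t)$ increase with $t$, and to conclude that the densities increase with $t$. Throughout, $\sigma\in\{P,E\}$ and $j\geq2$ are fixed.

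\emph{Step 1: density interpretation.} By definition, $\alpha_\sigma(j,t)=D^{(\sigma)}_{j,t}(0)$. This is the proportion of residues $x\in\mathbb Z/M_\sigma(j,t)\mathbb Z$ with $m(d)\mid x$ for some $d\in\mathcal D_\sigma(j,t)$. Every $m(d)$ divides $M_\sigma(j,t)$, so the set
\[
V_\sigma(j,t):=\bigcup_{d\in\mathcal D_\sigma(j,t)} m(d)\mathbb Z\subseteq\mathbb Z
\]
is periodic modulo $M_\sigma(j,t)$. Hence $\alpha_\sigma(j,t)$ equals the natural density of $V_\sigma(j,t)$ in $\mathbb Z$. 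The same proportion is obtained modulo any common multiple $L$ of the $m(d)$, because such an $L$ is a multiple of the period. The inclusion--exclusion formula for $\alpha_\sigma(j,t)$ in Propositions~\ref{propn: liminf expression pierce} and~\ref{propn: liminf expression engel} expresses the same fact: it depends only on the $m(d)$, not on the ambient modulus. If $\mathcal D_\sigma(j,t)=\varnothing$, then $V_\sigma(j,t)=\varnothing$ and $\alpha_\sigma(j,t)=0$, which is consistent with the convention $M_\sigma(j,t)=1$ and the empty union.

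\emph{Step 2: nesting.} The only condition in $\mathcal D_P(j,t)$ or $\mathcal D_E(j,t)$ that involves $t$ is $q_j\leq t$; for $j=1$ it is $p\leq t$. Therefore
\[
\mathcal D_\sigma(j,t)\subseteq\mathcal D_\sigma(j,t+1),
\]
and the value $m(d)$ of each tuple $d$ does not change with $t$. It follows that $V_\sigma(j,t)\subseteq V_\sigma(j,t+1)$.

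\emph{Step 3: comparison.} Work modulo $L:=M_\sigma(j,t+1)$. By Step 2 every $m(d)$ with $d\in\mathcal D_\sigma(j,t)$ divides $L$, so $M_\sigma(j,t)\mid L$. Computing both densities as proportions of residues modulo $L$, via Step 1, the nesting of Step 2 gives
\[
\alpha_\sigma(j,t)=\frac{|V_\sigma(j,t)\bmod L|}{L}\leq\frac{|V_\sigma(j,t+1)\bmod L|}{L}=\alpha_\sigma(j,t+1).
\]
This proves monotonicity for $t\geq j+1$ in the Pierce case and for $t\geq2$ in the Engel case.

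The only point that needs care is Step 1, namely that $D^{(\sigma)}_{j,t}(0)$ is independent of the chosen modulus. This reduces to the periodicity of a union of subgroups and needs nothing beyond the definitions. Note that the argument uses the zero dilation in an essential way: for the shifted classes defining $\beta_\sigma$, the union also grows with $t$, so the same reasoning would appear to apply, but this is not needed here.
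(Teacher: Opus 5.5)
Your argument is correct and is essentially the paper's proof: the paper likewise passes to the common modulus $M_\sigma(j,t+1)$, justifies computing $\alpha_\sigma(j,t)$ there by counting preimages under the reduction map $\mathbb Z/M_\sigma(j,t+1)\mathbb Z\to\mathbb Z/M_\sigma(j,t)\mathbb Z$ (your Step~1), and then concludes from $\mathcal D_\sigma(j,t)\subseteq\mathcal D_\sigma(j,t+1)$. One correction to your closing aside: nothing here uses the zero dilation, because the shifted classes $x\equiv A_\sigma(d)\pmod{m(d)}$ are also independent of $t$, so the same argument shows that $\beta_\sigma(j,t)$ is nondecreasing in $t$ as well.
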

\begin{proof}
Fix \(\sigma\in\{P,E\}\), and write
\[
M_t:=M_\sigma(j,t),
\qquad
M_{t+1}:=M_\sigma(j,t+1).
\]
Since
\(
\mathcal D_\sigma(j,t)\subseteq\mathcal D_\sigma(j,t+1),
\)
we have
\[
M_t\mid M_{t+1}.
\]

For any positive integer \(M\) divisible by \(m(d)\) for every
\(d\in\mathcal D_\sigma(j,t)\), put
\[
V_{j,t}(M)
:=
\bigcup_{d\in\mathcal D_\sigma(j,t)}
\left\{
x\in\mathbb Z/M\mathbb Z:
x\equiv0\pmod{m(d)}
\right\}.
\]
By definition,
\[
\alpha_\sigma(j,t)=\frac{|V_{j,t}(M_t)|}{M_t}.
\]

Consider the natural reduction map
\[
\pi:\mathbb Z/M_{t+1}\mathbb Z
\longrightarrow
\mathbb Z/M_t\mathbb Z.
\]
Every residue class modulo \(M_t\) has exactly \(M_{t+1}/M_t\)
preimages under \(\pi\). We claim that
\[
\pi^{-1}\bigl(V_{j,t}(M_t)\bigr)
=
V_{j,t}(M_{t+1}).
\]
Indeed, for \(x\in\mathbb Z/M_{t+1}\mathbb Z\), its reduction
\(\pi(x)\) belongs to \(V_{j,t}(M_t)\) if and only if
\[
\pi(x)\equiv0\pmod{m(d)}
\]
for some \(d\in\mathcal D_\sigma(j,t)\). Since
\[
m(d)\mid M_t\mid M_{t+1},
\]
this is equivalent to
\[
x\equiv0\pmod{m(d)},
\]
which is precisely the condition that
\(x\in V_{j,t}(M_{t+1})\).

It follows that
\begin{align*}
|V_{j,t}(M_{t+1})|
=
\left|\pi^{-1}\bigl(V_{j,t}(M_t)\bigr)\right|=
\frac{M_{t+1}}{M_t}|V_{j,t}(M_t)|=
\alpha_\sigma(j,t)M_{t+1}.
\end{align*}
Finally, the inclusion
\[
\mathcal D_\sigma(j,t)\subseteq\mathcal D_\sigma(j,t+1)
\]
implies
\[
V_{j,t}(M_{t+1})
\subseteq
V_{j,t+1}(M_{t+1}).
\]
Therefore
\[
\alpha_\sigma(j,t)
=
\frac{|V_{j,t}(M_{t+1})|}{M_{t+1}}
\leq
\frac{|V_{j,t+1}(M_{t+1})|}{M_{t+1}}
=
\alpha_\sigma(j,t+1),
\]
which proves the claimed monotonicity.
\end{proof}

\begin{lemma}\label{lem:improved lower bound for liminf}
For every fixed integer \(j\geq 2\), one has
\[
\liminf_{n\to\infty}\frac{s_j(n)}{n}
\geq
\frac{1+\displaystyle\sum_{r=1}^{j+2}\varphi(r)}{(j+3)!}
>
\frac{1}{(j+2)!}.
\]
\end{lemma}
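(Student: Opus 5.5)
The plan is to start from Proposition~\ref{propn: liminf expression pierce}, which gives
\[
\liminf_{n\to\infty}\frac{s_j(n)}{n}=\sum_{t=j+1}^{\infty}\frac{\alpha_P(j,t)}{(t+1)(t+2)},
\]
and to bound the terms from below in two groups. Every $\alpha_P(j,t)=D^{(P)}_{j,t}(0)$ is a density, hence nonnegative, so dropping or lowering terms only decreases the sum. The first group is the single term $t=j+1$, which I will compute exactly. The second group is the range $t\geq j+2$, which I will control through Lemma~\ref{lem:monotonicity of alpha} and the formula \eqref{eqn: alpha j j+2}.

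For $t=j+1$, I will identify $\mathcal D_P(j,j+1)$ explicitly. A tuple $(p;q_2,\ldots,q_j)$ needs $j$ strictly increasing integers in $[2,j+1]$. There are exactly $j$ such integers, so all of them are used. This forces $p=2$ and $(q_2,\ldots,q_j)=(3,4,\ldots,j+1)$.

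Hence $\mathcal D_P(j,j+1)$ is a single tuple with $m(d)=(j+1)!$. A single class $x\equiv 0\pmod{(j+1)!}$ has density $1/(j+1)!$, so $\alpha_P(j,j+1)=1/(j+1)!$. The $t=j+1$ term therefore contributes
\[
\frac{1}{(j+1)!\,(j+2)(j+3)}=\frac{1}{(j+3)!}.
\]

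For $t\geq j+2$, Lemma~\ref{lem:monotonicity of alpha} gives $\alpha_P(j,t)\geq\alpha_P(j,j+2)$. By \eqref{eqn: alpha j j+2}, this value is $\frac{1}{(j+2)!}\sum_{r=1}^{j+2}\varphi(r)$. The remaining weights telescope:
\[
\sum_{t\geq j+2}\frac{1}{(t+1)(t+2)}=\sum_{t\geq j+2}\left(\frac{1}{t+1}-\frac{1}{t+2}\right)=\frac{1}{j+3}.
\]
So the tail contributes at least $\sum_{r=1}^{j+2}\varphi(r)/(j+3)!$. Adding the $t=j+1$ term gives the first inequality of the lemma.

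For the strict inequality, it suffices to show $1+\sum_{r=1}^{j+2}\varphi(r)>j+3$. Since $j\geq2$, the values at $r=1,2,3,4$ are $1,1,2,2$, which sum to $6$. Each further $r\leq j+2$ adds at least $1$. Hence $\sum_{r=1}^{j+2}\varphi(r)\geq j+4$, and so
\[
1+\sum_{r=1}^{j+2}\varphi(r)\geq j+5>j+3.
\]
Dividing by $(j+3)!$ gives the bound $>1/(j+2)!$.

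The only step needing care is the exact description of $\mathcal D_P(j,j+1)$, and it is a simple pigeonhole count. Everything else is direct substitution of earlier results: Proposition~\ref{propn: liminf expression pierce}, Lemma~\ref{lem:monotonicity of alpha}, and \eqref{eqn: alpha j j+2}.
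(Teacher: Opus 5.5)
Your proposal is correct and is essentially the paper's argument. The paper rewrites the series by summation by parts and drops the nonnegative increments beyond $t=j+2$. That is algebraically the same as your termwise bound $\alpha_P(j,t)\geq\alpha_P(j,j+2)$ for $t\geq j+2$ followed by telescoping the weights. The computation $\alpha_P(j,j+1)=1/(j+1)!$, the use of \eqref{eqn: alpha j j+2}, and the final strict inequality via $\varphi(3),\varphi(4)$ coincide with the paper's.
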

\begin{proof}
By Proposition~\ref{propn: liminf expression pierce} and summation by
parts,
\[
\liminf_{n\to\infty}\frac{s_j(n)}{n}
=
\frac{\alpha_P(j,j+1)}{j+2}
+
\sum_{t=j+2}^{\infty}
\frac{\alpha_P(j,t)-\alpha_P(j,t-1)}{t+1}.
\]
Lemma~\ref{lem:monotonicity of alpha} shows that every term in the sum
is nonnegative. Hence
\[
\liminf_{n\to\infty}\frac{s_j(n)}{n}
\geq
\frac{\alpha_P(j,j+1)}{j+2}
+
\frac{\alpha_P(j,j+2)-\alpha_P(j,j+1)}{j+3}.
\]
The set \(\mathcal{D}_P(j,j+1)\) consists of the single tuple
\[
d=(2;3,4,\ldots,j+1).
\]
For this tuple,
\[
m(d)=2\cdot 3\cdots(j+1)=(j+1)!.
\]
Hence
\begin{align*}
\alpha_P(j,j+1)=\frac{1}{(j+1)!}
\end{align*}
and, by~\eqref{eqn: alpha j j+2},
\[
\alpha_P(j,j+2)
=
\frac{1}{(j+2)!}\sum_{r=1}^{j+2}\varphi(r).
\]
Therefore
\[
\liminf_{n\to\infty}\frac{s_j(n)}{n}
\geq
\frac{1+\displaystyle\sum_{r=1}^{j+2}\varphi(r)}{(j+3)!}.
\]
Finally,
\[
\sum_{r=1}^{j+2}\varphi(r)-(j+2)
\geq
(\varphi(3)-1)+(\varphi(4)-1)>0,
\]
which proves that the last expression is greater than \(1/(j+2)!\).
\end{proof}

\begin{lemma}
For fixed $j\geq 2$, one has
\[
\limsup_{n\to\infty}\frac{s_j(n)}{n}
\leq
\exp\left(-\frac{j}{e}+O(\log j)\right).
\]
\end{lemma}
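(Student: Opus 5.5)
The plan is to start from Proposition~\ref{propn: limsup expression pierce}, which gives
\[
\limsup_{n\to\infty}\frac{s_j(n)}{n}=\sum_{t=j+1}^{\infty}\frac{\beta_P(j,t)}{(t+1)(t+2)}.
\]
I will bound each $\beta_P(j,t)$ in two ways. The trivial bound is $\beta_P(j,t)\le 1$, since it is a density. The second is the union bound applied to $\beta_P(j,t)=D^{(P)}_{j,t}(1)$, where the union defining $U^{(P)}_{j,t}(1)$ is a union of residue classes of densities $1/m(d)$:
\[
\beta_P(j,t)\le \sum_{d\in\mathcal D_P(j,t)}\frac{1}{m(d)}
=\sum_{(p;q_2,\dots,q_j)}\frac{1}{pq_2\cdots q_j}.
\]
I will drop the primality of $p$. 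The tuple $(p,q_2,\dots,q_j)$ is then a strictly increasing $j$-tuple of integers in $[2,t]$. By the elementary symmetric function inequality $e_j(x)\le (\sum x_i)^j/j!$, this gives
\[
\beta_P(j,t)\le \frac{1}{j!}\Bigl(\sum_{a=2}^{t}\frac1a\Bigr)^j\le \frac{(\log t)^j}{j!}.
\]
So
\[
\limsup_{n\to\infty}\frac{s_j(n)}{n}\le \sum_{t\ge 2}\frac{\min\{1,(\log t)^j/j!\}}{t^2},
\]
and it remains to estimate this explicit sum.

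I will split at a threshold $T=e^{U}$ with $U:=(j!)^{1/j}$, so that $(\log T)^j=j!$. By Stirling, $U=j/e+O(\log j)$. The tail $t>T$, using $\beta\le1$, contributes at most $\sum_{t>T}t^{-2}\ll 1/T=e^{-U}$. For the head $t\le T$, I will compare with an integral: the function $(\log x)^j/x^2$ is unimodal with maximum at $x=e^{j/2}$, so the sum is at most the integral plus $O$ of the maximum value on $[2,T]$. The substitution $u=\log x$ gives
\[
\frac{1}{j!}\int_{1}^{T}\frac{(\log x)^j}{x^2}\,dx=\frac{1}{j!}\int_0^{U}u^je^{-u}\,du.
\]

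The main point is that the cutoff $U\approx j/e$ lies well below the peak $u=j$ of $u^je^{-u}$. Hence on $[0,U]$ the integrand is increasing, and the integral is at most $U\cdot U^je^{-U}=U\,j!\,e^{-U}$. The head is therefore $\le U e^{-U}$. The discrete maximum-term correction is bounded similarly by $(\log T)^j/(j!\,T^{2})\le e^{-2U}$ once the peak at $e^{j/2}$ exceeds $T$ (true for large $j$; small $j$ go into the constant). Combining the head and tail gives
\[
\limsup_{n\to\infty}\frac{s_j(n)}{n}\ll (U+1)e^{-U}=\exp\Bigl(-\frac{j}{e}+O(\log j)\Bigr),
\]
using $U=j/e+O(\log j)$ and $\log U=O(\log j)$.

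The step needing care is the choice of cutoff together with the verification that $u^je^{-u}$ is increasing on $[0,U]$. Without this, bounding the full integral by $\Gamma(j+1)$ gives only an $O(1)$ bound. Everything else is the union bound and Stirling's formula for $(j!)^{1/j}$.
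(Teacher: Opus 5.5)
Your proposal is correct and follows essentially the same route as the paper: the limsup formula from Proposition~\ref{propn: limsup expression pierce}, the union bound $\beta_P(j,t)\le\sum_{d}1/m(d)$, a bound on that sum by a power of a harmonic sum divided by a factorial, a split of the $t$-sum near $e^{j/e}$ with $\beta_P\le 1$ on the tail, and, on the head, monotonicity of $u^{k}e^{-u}$ below its peak together with Stirling's formula. Your variations are minor simplifications of the paper's argument: you drop the primality of $p$ to get $(\log t)^j/j!$ without Mertens' theorem, and you cut at $U=(j!)^{1/j}$ so the head is directly at most $Ue^{-U}$, whereas the paper uses the bound $\log\log(et)(1+\log t)^{j-1}/(j-1)!$ and the cutoff $T=\exp((j-1)/e)$.
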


\begin{proof}
By Proposition~\ref{propn: limsup expression pierce},
\[
\limsup_{n\to\infty}\frac{s_j(n)}{n}
=
\sum_{t\geq j+1}\frac{\beta_P(j,t)}{(t+1)(t+2)}.
\]
Define
\[
\Sigma_{j,t}:=\sum_{d\in \mathcal{D}_P(j,t)}\frac{1}{m(d)}.
\]
We have
\begin{align}\label{eqn: beta bound}
\beta_P(j,t)\leq \Sigma_{j,t}.
\end{align}
Moreover,
\[
\Sigma_{j,t}
=
\sum_{\substack{p<q_2<\cdots<q_j\leq t\\ p\ \mathrm{prime}}}
\frac{1}{p q_2\cdots q_j}.
\]
Bounding the ordered sum over $q_2,\ldots,q_j$ by an unordered product gives
\[
\Sigma_{j,t}
\leq
\frac{1}{(j-1)!}
\left(\sum_{p\leq t}\frac{1}{p}\right)
\left(\sum_{n\leq t}\frac{1}{n}\right)^{j-1}.
\]
Using Mertens' theorem and the standard bound on the harmonic sum
\[
\sum_{p\leq t}\frac{1}{p}\ll \log\log(et),
\qquad
\sum_{n\leq t}\frac{1}{n}\le 1+\log t,
\]
we obtain
\begin{align}\label{eqn: Sjt bound}
\Sigma_{j,t}
\ll
\frac{\log\log(et)(1+\log t)^{j-1}}{(j-1)!}.
\end{align}

Since $(t+1)(t+2)\asymp t^2$, we see that
\[
\limsup_{n\to\infty}\frac{s_j(n)}{n}\ll
\sum_{t\geq j+1}
\frac{\beta_P(j, t)}{t^2}.
\]
Let
\[
T:=\exp((j-1)/e).
\]
We split the sum into the ranges $t\leq T$ and $t>T$.

For the tail range $t>T$, we use the bound $\beta_P(j, t)\leq 1$ that one gets from~\eqref{eqn: beta alternate defn}. Thus the tail is bounded above by
\[
\sum_{t>T}\frac{1}{t^2}
\ll
\frac{1}{T}
=
\exp(-(j-1)/e).
\]

It remains to estimate the initial range $t\leq T$. Using~\eqref{eqn: beta bound} and~\eqref{eqn: Sjt bound} we get
\[
\sum_{t\leq T}
\frac{\beta_P(j, t)}{t^2}
\ll
\frac{1}{(j-1)!}
\sum_{t\leq T}
\frac{\log\log(et)(1+\log t)^{j-1}}{t^2}.
\]
Estimating the sum by an integral and making the change of variables
\[
x=1+\log u,
\qquad
\frac{du}{u^2}=e^{1-x}\,dx,
\]
we get
\[
\sum_{t\leq T}
\frac{\log\log(et)(1+\log t)^{j-1}}{t^2}
\ll
\int_1^{1+\log T} e^{-x}x^{j-1}\log x\,dx.
\]
Since $\log T=(j-1)/e$, set
\[
X:=1+\frac{j-1}{e}.
\]
On the interval $1\leq x\leq X$, the function $e^{-x}x^{j-1}$ is increasing, because
\[
\frac{d}{dx}\log(e^{-x}x^{j-1})
=
-1+\frac{j-1}{x}>0
\]
for $x\leq X<j-1$, once $j-1$ is large. Hence
\[
\int_1^X e^{-x}x^{j-1}\log x\,dx
\ll
X\log X\cdot e^{-X}X^{j-1}.
\]
Thus the contribution from $t\leq T$ is
\[
\ll
\frac{X\log X}{(j-1)!}e^{-X}X^{j-1}.
\]

By Stirling's formula,
\[
\log((j-1)!)=(j-1)\log (j-1)-(j-1)+O(\log (j-1)).
\]
Since
\[
X=1+\frac{j-1}{e},
\]
we have
\[
\log X=\log {(j-1)}-1+O(1/(j-1)).
\]
Therefore
\begin{align*}
\log\left(\frac{e^{-X}X^{j-1}}{(j-1)!}\right)
&=
-X+(j-1)\log X-\log((j-1)!)\\
&=-\frac{j-1}{e}+O(\log {(j-1)}).
\end{align*}
The extra factor $X\log X$ contributes only $O(\log {(j-1)})$ to the logarithm. Therefore the initial range contributes at most
\[
\exp\left(-\frac{j-1}{e}+O(\log {(j-1)})\right).
\]

Combining the initial range and the tail range, we obtain the result.
\end{proof}
\subsection{Proof of Proposition~\ref{propn: bounds on engel limsup and liminf}}
By Proposition~\ref{propn: liminf expression engel} and summation by
parts,
\[
\liminf_{n\to\infty}\frac{t_j(n)}{n}
=
\alpha_E(j,2)
+
\sum_{t=2}^{\infty}
\frac{\alpha_E(j,t+1)-\alpha_E(j,t)}{t}.
\]
By Lemma~\ref{lem:monotonicity of alpha}, the summands are nonnegative,
and hence
\[
\liminf_{n\to\infty}\frac{t_j(n)}{n}
\geq
\frac{\alpha_E(j,2)+\alpha_E(j,3)}{2}.
\]
The set
\(\mathcal D_E(j,2)\) consists of the single tuple
\[
d=(2;2,\ldots,2).
\]
Its modulus is
\[
m(d)=2^j.
\]
Consequently,
\begin{align*}
\alpha_E(j,2)=\frac{1}{2^j}.
\end{align*}
Recall from~\eqref{eqn: alpha j 3} that
\begin{align*}
\alpha_E(j,3)
&=
2^{1-j}-3^{-j}.
\end{align*}
Therefore we obtain
\[
\liminf_{n\to\infty}\frac{t_j(n)}{n}
\geq
\frac{3}{2^{j+1}}-\frac{1}{2\cdot3^j}.
\]

We next prove the upper bound. By
Proposition~\ref{propn: limsup expression engel},
\[
\limsup_{n\to\infty}\frac{t_j(n)}{n}
=
\sum_{t=2}^{\infty}
\frac{\beta_E(j,t)}{t(t-1)}.
\]
Recall that \(\beta_E(j,t)\) is the density of the union of the
residue classes associated with the tuples in
\(\mathcal D_E(j,t)\). Since the residue class associated with
\(d\in\mathcal D_E(j,t)\) has density \(1/m(d)\), the union bound
gives
\[
\beta_E(j,t)\leq H_j(t),
\]
where
\[
H_j(t)
:=
\sum_{2\leq q_1\leq q_2\leq\cdots\leq q_j\leq t}
\frac{1}{q_1q_2\cdots q_j}=
\sum_{\substack{a_2,\ldots,a_t\geq 0\\
a_2+\cdots+a_t=j}}
\prod_{r=2}^{t}\frac{1}{r^{a_r}}.
\]

Multiplying by \(2^j\) and eliminating \(a_2\), we find that
\begin{align*}
2^jH_j(t)
&=
\sum_{\substack{a_3,\ldots,a_t\geq 0\\
a_3+\cdots+a_t\leq j}}
\prod_{r=3}^{t}\left(\frac{2}{r}\right)^{a_r} 
\leq
\prod_{r=3}^{t}
\left(1-\frac{2}{r}\right)^{-1} = \frac{t(t-1)}{2}.
\end{align*}
Consequently,
\[
\beta_E(j,t)
\leq 
\frac{t(t-1)}{2^{j+1}}.
\]
From the fact that $\beta_E(j, t)\leq 1$ that one gets from~\eqref{eqn: beta alternate defn}, it follows that
\[
\frac{\beta_E(j,t)}{t(t-1)}
\leq
\min\left\{
\frac{1}{t(t-1)},\frac{1}{2^{j+1}}
\right\}.
\]
Hence
\begin{equation*}
\limsup_{n\to\infty}\frac{t_j(n)}{n}
\leq
\sum_{t=2}^{\infty}
\min\left\{
\frac{1}{t(t-1)},\frac{1}{2^{j+1}}
\right\}.
\end{equation*}
Consequently, with \(T=\lceil2^{(j+1)/2}\rceil\),
\begin{align*}
\limsup_{n\to\infty}\frac{t_j(n)}{n}
&\leq
\sum_{t=2}^{T}\frac1{2^{j+1}}
+
\sum_{t>T}\frac1{t(t-1)} \\
&=
\frac{T-1}{2^{j+1}}+\frac1T
<
2^{(1-j)/2}.
\end{align*}

\section{Length Functions and Iterated Remainder Sets}\label{sec: Length Functions and Iterated Remainder Sets}

As stated earlier, a key motivation for studying the iterated remainder sets \(S_j(n)\) and \(T_j(n)\) is their connection to the length functions \(\mathcal{P}(n)\) and \(\mathcal{E}(n)\). 
We now give the proof of Lemma~\ref{lemma: engel length criterion}, which concerns the relation of $\mathcal{E}(n)$ to $T_j(n)$. 
\begin{proof}[Proof of Lemma~\ref{lemma: engel length criterion}]
By the definition of \(T_j(n)\), for every \(j\geq 0\) and every integer \(r\),
\[
r\in T_j(n)
\]
if and only if there exists \(a\) with \(1\leq a<n\) such that the Engel
trajectory starting from \(a\) reaches \(r\) at step \(j\). In particular,
\[
0\in T_j(n)
\quad\Longleftrightarrow\quad
\mathcal{E}(n,a)=j
\]
for some \(1\leq a<n\).

Suppose that \(\mathcal{E}(n)=\ell\). Since the maximum defining \(\mathcal{E}(n)\) is attained
for some \(1\leq a<n\), there exists \(a\) such that \(\mathcal{E}(n,a)=\ell\).
Hence \(a_\ell=0\), so \(0\in T_\ell(n)\).

Now let \(r\in T_\ell(n)\). Since $r\in T_\ell(n)$, there is some $b$ satisfying $1\leq b<n$ such that the trajectory from $b$ is defined through step $\ell$, so $E(n,b)\ge\ell$. On the other hand,
$E(n,b)\le E(n)=\ell$.

Hence $E(n,b)=\ell$, and therefore $r=b_\ell=0$. 

Conversely, suppose that \(T_\ell(n)=\{0\}\). Since \(0\in T_\ell(n)\),
there exists \(1\leq a<n\) such that \(a_\ell=0\). Therefore
\[
\mathcal{E}(n,a)=\ell,
\]
and hence \(\mathcal{E}(n)\geq\ell\).

If \(\mathcal{E}(n)>\ell\), then there exists \(1\leq b<n\) such that
\(\mathcal{E}(n,b)>\ell\). The trajectory starting from \(b\) has not reached \(0\)
at step \(\ell\), so
\[
b_\ell>0.
\]
Since \(b_\ell\in T_\ell(n)\), this contradicts \(T_\ell(n)=\{0\}\).
Thus \(\mathcal{E}(n)\leq\ell\), and therefore \(\mathcal{E}(n)=\ell\). 


\end{proof}
We now prove the lower bounds on the length functions.
\begin{proof}[Proof of Theorem~\ref{thm:lower-bounds-length-functions}]
We first prove the assertion for the Pierce-type length function. Let
\(j\geq 2\) be an integer such that
\[
(j+3)!<n.
\]
We consider the interval
\[
I_{j+1}^{\circ}(n)
=
\left(
\frac{n}{j+3},
\frac{n}{j+2}
\right)\cap\mathbb Z.
\]
Its underlying real interval has length
\[
\frac{n}{j+2}-\frac{n}{j+3}
=
\frac{n}{(j+2)(j+3)}
>
(j+1)!.
\]

For \(t=j+1\), the set \(\mathcal D_P(j,t)\) consists of the single
tuple
\[
\mathbf d=(2;3,4,\ldots,j+1).
\]
The modulus associated with this tuple is
\[
m(\mathbf d)
=
2\cdot 3\cdots(j+1)
=
(j+1)!.
\]

Any open interval of length greater than \(m(\mathbf d)\) contains an
integer belonging to each prescribed residue class modulo
\(m(\mathbf d)\). Hence there exists
\(
r\in I_{j+1}^{\circ}(n)
\)
such that
\[
r\equiv A_P(\mathbf d)n\pmod{m(\mathbf d)}.
\]
Lemma~\ref{lem:congruence-criterion pierce} now implies that
\(
r\in S_j(n).
\)
In particular, \(S_j(n)\) contains a positive element.

Suppose that \(\mathcal{P}(n)=\ell\leq j+1\). By Lemma~\ref{lemma: pierce length criterion},
\[
S_{\ell-1}(n)=\{0\}.
\]

It follows that if \(j\geq \ell\), then \(S_j(n)=\varnothing\), while if
\(j=\ell-1\), then
\(
S_j(n)=\{0\}.
\)
Both possibilities contradict the existence of the positive element
\(r\in S_j(n)\). Therefore
\[
\mathcal{P}(n)\geq j+2.
\]

Let \(K=K(n)\). For all sufficiently large \(n\), we have \(K\geq 5\),
so we may take \(j=K-3\). Since
\(K!<n\), this gives
\[\mathcal{P}(n)\geq K-1.
\]

It remains to estimate \(K\). By its definition,
\[
K!<n\leq (K+1)!.
\]
Stirling's formula gives
\[
\mathcal{P}(n)
\geq
K-1
=
(1-o(1))\frac{\log n}{\log\log n}.
\]

We next prove the Engel-type bound. Let \(j\geq 2\) and suppose that
\[
2^{j+1}<n.
\]
Consider the interval
\[
J_2^{\circ}(n)
=
\left(\frac n2,n\right)\cap\mathbb Z.
\]

For \(t=2\), the set \(\mathcal D_E(j,2)\) consists of the single
tuple
\[
\mathbf e=(2;2,\ldots,2),
\]
and its associated modulus is
\[
m(\mathbf e)=2^j.
\]
Since \(J_2^{\circ}(n)\) has length greater than \(2^j\), there exists
an integer
\(
r\in J_2^{\circ}(n)
\)
such that
\[
r\equiv A_E(\mathbf e)n\pmod{2^j}.
\]
Lemma~\ref{lem:congruence-criterion engel} therefore gives
\(
r\in T_j(n).
\)
In particular, \(T_j(n)\) contains a positive element.

Suppose that \(\mathcal{E}(n)=\ell\leq j\). By Lemma~\ref{lemma: engel length criterion},
\[
T_\ell(n)=\{0\}.
\]
It follows from the recursive definition that
if \(j>\ell\), then \(T_j(n)=\varnothing\), while if
\(j = \ell\), then \(T_j(n)=\{0\}\). This contradicts the existence of
the positive element \(r\in T_j(n)\). We conclude that
\[
\mathcal{E}(n)\geq j+1.
\]
For \(n\geq 9\), put
\[
m:=\left\lfloor\frac{\log(n-1)}{\log 2}\right\rfloor.
\]
Then \(m\geq 3\) and
\[
2^m\leq n-1<n.
\]
Taking \(j=m-1\), we obtain the result.
\end{proof}

\bibliographystyle{amsalpha}
\bibliography{references}

\end{document}